\newtheorem{theorem}{Theorem}[section]
\newtheorem{lemma}[theorem]{Lemma}
\newtheorem{proposition}[theorem]{Proposition}
\theoremstyle{definition}
\newtheorem*{notation}{Notation}
\theoremstyle{remark}
\newcommand{\GL}{{\mathrm {GL}}}
\newcommand{\PGL}{{\mathrm {PGL}}}
\newcommand{\SL}{{\mathrm {SL}}}
\newcommand{\PSL}{{\mathrm {PSL}}}
\newcommand{\PSU}{{\mathrm {PSU}}}
\newcommand{\Ker}{\operatorname{Ker}}
\newcommand{\Aut}{{\mathrm {Aut}}}
\newcommand{\Out}{{\mathrm {Out}}}
\newcommand{\Mult}{{\mathrm {Mult}}}
\newcommand{\Irr}{{\mathrm {Irr}}}
\newcommand{\St}{{\mathrm {St}}}
\newcommand{\cd}{{\mathrm {cd}}}
\newcommand{\lcm}{{\mathrm {lcm}}}
\newcommand{\ZZ}{{\mathbb Z}}
\newcommand{\FF}{{\mathbb F}}
\newcommand{\ta}{\hspace{0.5mm}^{2}\hspace*{-0.2mm}}
\newcommand{\tb}{\hspace{0.5mm}^{3}\hspace*{-0.2mm}}
\begin{document}
\title[$\PSL_4(q)$ and character degrees]
{Projective Special  linear groups $\PSL_4(q)$ are determined by the set of their
character degrees }

\author[H.N. Nguyen]{Hung Ngoc Nguyen}
\address{Department of Mathematics, The University of Akron, Akron,
Ohio 44325, USA} \email{hungnguyen@uakron.edu}

\author[H.P. Tong-Viet]{Hung P. Tong-Viet}
\address{School of Mathematical Sciences, North-West University (Mafikeng),
Mmabatho 2735, South Africa} \email{tvphihung@gmail.com}

\author[T.P. Wakefield]{Thomas P. Wakefield}
\address{Department of Mathematics and Statistics, Youngstown State University, One University
Plaza, Youngstown, Ohio 44555, USA }\email{tpwakefield@ysu.edu}

\subjclass[2010]{Primary 20C15; Secondary 20D05}

\keywords{Linear groups, simple groups, character degrees}

\date{\today}

\begin{abstract} Let $G$ be a finite group and let $\cd(G)$ be the set
of all irreducible complex character degrees of $G$. It was conjectured
by Huppert in Illinois J. Math. 44 (2000) that, for every
non-abelian finite simple group $H$, if $\cd(G)=\cd(H)$ then $G\cong
H\times A$ for some abelian group $A$. In this paper, we confirm the
conjecture for the family of projective special linear groups
$\textrm{PSL}_4(q)$ with $q\geq 13$.
\end{abstract}

\maketitle


\section{Statement of the result}

Let $G$ be a finite group and let $\cd(G)$ denote the set of its
irreducible complex character degrees. In general, the character
degree set $\cd(G)$ does not completely determine the structure of
$G$. It is possible for non-isomorphic groups to have the same set
of character degrees. For example, the non-isomorphic groups $D_8$
and $Q_8$ not only have the same set of character degrees, but also
share the same character table. The character degree set also cannot
be used to distinguish between solvable and nilpotent groups, as
$\text{cd}(Q_8)=\cd(S_3)=\{1,2\}$. However, Huppert conjectured in
the late 1990s that the nonabelian simple groups are essentially
determined by the set of their character degrees. More explicitly,
he proposed in~\cite{Huppert1} that
\begin{quote} \emph{if $G$ is a finite group and $H$ a finite
nonabelian simple group such that the sets of character degrees of
$G$ and $H$ are the same, then $G \cong H \times A$, where $A$ is an
abelian group.}\end{quote} As abelian groups have only the trivial
character degree and the character degrees of $H \times A$ are the
products of the character degrees of $H$ and those of $A$, this
result is the best possible. The hypothesis that $H$ is a nonabelian
simple group is critical. There cannot be a corresponding result for
solvable groups since $Q_8 \ncong S_3 \times A$ for any abelian
group $A$.

To give some evidence, Huppert verified the conjecture on a
case-by-case basis for many nonabelian simple groups, including the
Suzuki groups, many of the sporadic simple groups, and a few of the
simple groups of Lie type, cf.~\cite{Huppert1}. There has been much
recent work in verifying the conjecture for simple groups of Lie
type, cf.~\cite{Huppert1,Huppert2,TV1,HW1,Wakefield1,Wakefield3}. In
particular, the conjecture has been verified for all simple groups
of Lie type of rank at most two.

In this paper, we establish Hupper's conjecture for the simple
linear groups $\PSL_4(q)$, which is the first time the conjecture is
verified for a family of simple Lie type groups of rank larger than
two. The case $\PSL_4(2)$, which is isomorphic to $A_8$, is already
done by Huppert in~\cite{Huppert9}. It turns out that the proof in
the case $3\leq q\leq 11$ requires some ad hoc arguments and, as the
paper is long enough, we decide to postpone these exceptional cases
to another time. Hence we assume $q\geq 13$ from now on.

\begin{theorem} Let $q\geq 13$ be a prime power and let $G$ be a finite group such that
$\cd(G)=\cd(\PSL_4(q))$. Then $G$ is isomorphic to the direct
product of $\PSL_4(q)$ and an abelian group.
\end{theorem}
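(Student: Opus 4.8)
The plan is to run Huppert's standard scheme for his conjecture (as in~\cite{Huppert1}) with $H=\PSL_4(q)$, $q=p^{f}\ge 13$. The essential preliminary is a complete and explicit description of $\cd(\PSL_4(q))$: the degrees of the unipotent characters, the degrees of all irreducible characters of $\SL_4(q)$ having the centre in their kernel, the Steinberg degree $q^{6}=|H|_{p}$, and the largest degree $b(H)$, together with a bookkeeping of which polynomials in $q$ occur and of their pairwise coprimality. From $\cd(G)=\cd(H)$ one reads off the crude invariants that drive everything: $G$ and $H$ have the same largest character degree and the same set of primes dividing character degrees, and (by the Ito--Michler theorem applied to each) $p$ divides a degree in $\cd(G)$.

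\textbf{Steps 1 and 2.} First I would prove $G'=G''$: $G$ is nonsolvable because $\cd(\PSL_4(q))$ cannot be the degree set of a solvable group, and then Huppert's lemma on the consequences of $G'/G''\neq 1$ shows that a nonabelian solvable section would force into $\cd(G)$ a configuration of degrees that the explicit list $\cd(\PSL_4(q))$ does not contain. Next, choosing $M\trianglelefteq G$ maximal with $G'/M$ a nonabelian chief factor, we have $G'/M\cong S^{k}$ for a nonabelian simple group $S$; comparing $(\text{smallest nontrivial degree of }S)^{k}$ and related products against $b(H)$ and the prime set forces $k=1$, and then one invokes the classification of finite simple groups to eliminate every $S\not\cong\PSL_4(q)$ by comparing $\cd(S)$ — and, since $G/M$ is almost simple with socle $\cong S$, also the degrees arising from $\Aut(S)$ — with $\cd(\PSL_4(q))$. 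The groups needing the most care are those of Lie type of comparable size or the same characteristic, for instance $\PSU_4(q)$, $\mathrm{PSp}_6(q)$, $\PSL_3(q)$, $\PSL_5(q)$ and $\PSL_2(q^{6})$, together with a finite list of cross-characteristic and small-group coincidences; the $p$-part $q^{6}$, the largest degree, and a handful of medium degrees of $\PSL_4(q)$ are the tools, and the polynomial inequalities they rely on are exactly what require $q\ge 13$.

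\textbf{Steps 3 and 4.} To show $M=1$, one first checks that $M$ has no nonabelian composition factor (another degree comparison), so $M$ is solvable, and then that every $G$-invariant character of $M$ extends to $G'$ — here the fact that for $q\ge 13$ the Schur multiplier of $\PSL_4(q)$ is exactly $\ZZ_{\gcd(4,q-1)}$, with no exceptional part, is what makes the cohomological obstruction vanish. Gallagher's theorem then shows that if some $G$-invariant $\theta\in\Irr(M)$ had $\theta(1)>1$, the products $\theta(1)\beta(1)$ with $\beta\in\cd(\PSL_4(q))$ would each have to divide a member of $\cd(\PSL_4(q))$, which the size and coprimality structure of that set forbids; hence $M$ is central in $G'$, and a chief-factor argument inside $M$ (producing an irreducible of $G$ whose degree is a proper multiple of a member of $\cd(H)$) gives $M=1$. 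Thus $G'$ is a perfect central extension of $\PSL_4(q)$, and since $\SL_4(q)$ and every intermediate cover have faithful irreducible degrees absent from $\cd(\PSL_4(q))$, we get $G'\cong\PSL_4(q)$. Finally, with $C=C_G(G')$ we have $G'\cap C=1$ and $G/C\hookrightarrow\Aut(\PSL_4(q))$ with socle $\cong\PSL_4(q)$; if $G/C$ strictly contained this socle, then (again invoking $q\ge 13$) it would have a character degree — say a degree of $\PGL_4(q)$ or of $\PSL_4(q).2$ — not in $\cd(\PSL_4(q))$, a contradiction. Hence $G=G'\times C$, and $\cd(C)=\{1\}$ forces $C$ to be abelian, so $G\cong\PSL_4(q)\times A$ with $A=C$.

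The hardest part is Step 2: eliminating, using only character-degree data, every nonabelian simple group other than $\PSL_4(q)$. This demands a sharp grip on the largest degree, the $p$-part $q^{6}$, and several intermediate degrees of $\PSL_4(q)$, plus a long case analysis over the classification of finite simple groups; it is also where the restriction $q\ge 13$ is genuinely needed, to keep the many polynomial inequalities among the degrees valid and to rule out accidental coincidences. By comparison, Steps 1, 3 and 4 are relatively routine, being by-now-standard applications of the extendibility criteria, Gallagher's theorem and chief-factor arguments.
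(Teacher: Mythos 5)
Your overall scaffolding matches the paper's (quasi-perfection, identification of the chief factor $G'/M$ via the classification, exclusion of outer automorphisms, then $M=1$), but there is a genuine gap at the heart of Steps 3--4, which is in fact the hardest part of the argument, not Step 2 as you claim. To get $M=1$ one must control \emph{all} of $\Irr(M)$, and the decisive point is to show that every \emph{linear} character $\lambda$ of $M$ is $G'$-\emph{invariant}; only then does Huppert's lemma give $M'=[M,G']$ and $|M:M'|\mid|\Mult(G'/M)|$, which is what turns $G'$ into a central extension of $\PSL_4(q)$ of bounded type. Your sketch addresses only $G$-invariant characters of $M$ (via Gallagher), and says nothing about a linear $\lambda$ with $I_{G'}(\lambda)<G'$; ``hence $M$ is central in $G'$'' does not follow from what you have. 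Ruling out $I_{G'}(\lambda)<G'$ is where the real work lies: one must run $I_{G'}(\lambda)$ up into a maximal subgroup $U$ of $G'/M$, use Kleidman's list of maximal subgroups of $\PSL_4(q)$ to reduce to the two parabolics and $\mathrm{Sp}_4(q)\cdot(q-1,2)$, and then carry out a delicate inertia-group analysis inside $[q^3]{:}\GL_3(q)$ and $[q^4]{:}(\SL_2(q)\times\SL_2(q)).\ZZ_{q-1}$, using in turn the maximal subgroups of $\SL_2(q)$, $\SL_3(q)$ and $\mathrm{Sp}_4(q)$, vanishing-off subgroups, and fully ramified sections. None of this is present in, or implied by, your outline. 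Relatedly, your assertion that the Schur multiplier $\ZZ_{(4,q-1)}$ ``makes the cohomological obstruction vanish'' is backwards: for odd $q$ it is nontrivial, and it is precisely the source of the residual difficulty in the last step.

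A second concrete gap is your disposal of the proper covers. You say $\SL_4(q)$ ``has faithful irreducible degrees absent from $\cd(\PSL_4(q))$,'' hence $G'\cong\PSL_4(q)$. The relevant degree is $\Phi_1\Phi_2\Phi_3\Phi_4/2\in\cd(\SL_4(q))\setminus\cd(\PSL_4(q))$, but a degree of $G'$ need only \emph{divide} a degree of $G$, and $\Phi_1\Phi_2\Phi_3\Phi_4/2$ divides $\Phi_1\Phi_2\Phi_3\Phi_4\in\cd(\PSL_4(q))$, so no contradiction arises this way. The paper instead first establishes $G/M\cong G'/M\times C/M$ (your outer-automorphism step, which must therefore come \emph{before} the endgame, not after), deduces that $G/M'$ is a central product $G'/M'\circ C/M'$ amalgamated over $M/M'$, and then splits into cases: if $C/M'$ is abelian one shows $\cd(\SL_4(q))\subseteq\cd(G/M')\subseteq\cd(\PSL_4(q))$ by twisting with a linear character of $C/M'$ that cancels the central character, contradicting the lemma above; if $C/M'$ is nonabelian one pairs a faithful character of $\SL_4(q)$ of degree $\Phi_1^2\Phi_2^2\Phi_4$ with a faithful nonlinear character of $C/M'$ to manufacture a degree of $G$ that is a proper multiple of $\Phi_1^2\Phi_2^2\Phi_4$. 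Your sketch would need to be repaired along these lines before the final reduction to $G\cong\PSL_4(q)\times A$ is legitimate.
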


Our proof is based on but requires modifications in the method
outlined by Huppert in~\cite{Huppert1}. We also use the structure of
maximal subgroups on finite classical groups in low dimension by
Kleidman and Liebeck in~\cite{Kleidman,Kleidman-Liebeck} and the
data given in Lubeck's webpage~\cite{Lubeck} on the complex
character degrees of finite groups of Lie type.

After a collection of some useful lemmas in
section~\ref{preparation1} and some analysis of character degrees of
$\PSL_4(q)$ in section~\ref{preparation2}, we prove in
section~\ref{step 1} that $G$ is quasi-perfect, i.e. $G'=G''$. We
then show in section~\ref{step 2} that if $G'/M$ is a chief factor
of $G$ then $G'/M\cong\PSL_4(q)$. It is then shown in
section~\ref{step3} that if $C/M=C_{G/M}(G'/M)$ then $G/M\cong
G'/M\times C/M$, which basically means that no outer automorphism of
$\PSL_4(q)$ is involved in the structure of $G$. In
section~\ref{step4}, we prove a technical result that every linear
character of $M$ is $G'$-invariant, which in particular implies that
$M'=[M,G']$ and $|M:M'|$ divides $|\Mult(G'/M)|$ by
Lemma~\ref{lemmaHuppert1}. Using this, we deduce in
section~\ref{step 5} that $M=1$ and therefore $G'\cong \PSL_4(q)$
and $G\cong G'\times C_G(G')$, as desired. On the way to the proof
of the main theorem, we also derive some interesting results on the
behavior of the irreducible characters of $\PSL_4(q)$ under the
action of its outer automorphism groups (see section~\ref{step3}).

\begin{notation} Our notation is fairly standard (see, e.g. \cite{Atl1} and \cite{Isaacs}). In particular, if $G$
is a finite group then $\Irr(G)$ denotes the set of all irreducible
characters of $G$ and $\cd(G)$ denotes the set of degrees of
irreducible characters in $\Irr(G)$. The set of all prime divisors of $|G|$ is denoted by $\pi(G).$ If $N\unlhd G$ and
$\lambda\in\Irr(N)$, then the induction of $\lambda$ from $N$ to $G$
is denoted by $\lambda^G$ and the set of irreducible constituents of
$\lambda^G$ is denoted by $\Irr(G|\lambda).$ Furthermore, if
$\chi\in\Irr(G)$ then $\chi_N$ is the restriction of $\chi$ to $N$.
The Schur multiplier of a group $G$ is denoted by $\Mult(G)$.
Finally, the greatest common divisor of two integers $a$ and $b$ is
$(a,b)$. \end{notation}


\section{Preliminaries}\label{preparation1}

In this section, we collect and also prove some lemmas which will be
used throughout the paper.

\begin{lemma}\emph{(\cite[Corollary~ 11.29]{Isaacs}).}\label{lemmaIsaacs} Let $N\unlhd G$ and $\chi\in\Irr(G)$. Let $\psi$ be a
constituent of $\chi_N$. Then $\chi(1)/\psi(1)$ divides $|G:N|$.
\end{lemma}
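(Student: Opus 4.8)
The plan is to prove this classical divisibility through Clifford theory together with a reduction to a central subgroup. First I would invoke Clifford's theorem to write $\chi_N = e(\psi_1 + \cdots + \psi_t)$, where $\psi = \psi_1$ and $\psi_1,\dots,\psi_t$ are the distinct $G$-conjugates of $\psi$; here $t = |G:T|$ with $T = I_G(\psi)$ the inertia group of $\psi$ in $G$, and $e$ is the common multiplicity. Thus $\chi(1)/\psi(1) = e\,|G:T|$. Since $|G:N| = |G:T|\cdot|T:N|$, the assertion $\chi(1)/\psi(1)\mid|G:N|$ is equivalent to $e\mid|T:N|$. By the Clifford correspondence, induction $\xi\mapsto\xi^G$ is a bijection from $\Irr(T|\psi)$ onto $\Irr(G|\psi)$, and the unique $\xi\in\Irr(T|\psi)$ with $\xi^G=\chi$ satisfies $\xi_N = e\psi$, so that $\xi(1)/\psi(1)=e$. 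Hence, after replacing $G$ by $T$ and $\chi$ by $\xi$, the problem is reduced to the situation in which $\psi$ is $G$-invariant, $\chi_N = e\psi$, and one must show that $e=\chi(1)/\psi(1)$ divides $|G:N|$.

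With $\psi$ now $G$-invariant, $(G,N,\psi)$ is a character triple, and I would pass to an isomorphic character triple $(\Gamma, Z, \lambda)$ in which $Z$ is a central subgroup of $\Gamma$ with $\Gamma/Z \cong G/N$ and $\lambda\in\Irr(Z)$ linear. Such a reduction is standard in the theory of character triples (equivalently, it comes from projective representations: the $G$-invariant $\psi$ extends to a projective representation of $G$ whose factor set $\alpha$ governs a degree-preserving correspondence between $\Irr(G|\psi)$ and the irreducible $\alpha^{-1}$-projective characters of $G/N$). Because a triple isomorphism preserves the degree ratios $\chi(1)/\psi(1)$, the character $\hat\chi\in\Irr(\Gamma|\lambda)$ matched with $\chi$ satisfies $\hat\chi(1)/\lambda(1) = \chi(1)/\psi(1)$; as $\lambda(1)=1$ this gives $\hat\chi(1) = \chi(1)/\psi(1)$. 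It therefore suffices to prove that $\hat\chi(1)$ divides $|\Gamma:Z| = |G:N|$.

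The remaining point is the purely arithmetic statement that an irreducible character degree divides the index of any central subgroup. Since $Z$ is central in $\Gamma$, for every $z\in Z$ we have $\hat\chi(z) = \hat\chi(1)\mu(z)$ for the linear character $\mu$ of $Z$ lying under $\hat\chi$, so $|\hat\chi(z)| = \hat\chi(1)$ and hence $Z\subseteq Z(\hat\chi)$, the center of $\hat\chi$. Consequently $|\Gamma:Z(\hat\chi)|$ divides $|\Gamma:Z|$, and combining this with the fact that $\hat\chi(1)$ divides $|\Gamma:Z(\hat\chi)|$ yields $\hat\chi(1)\mid|\Gamma:Z|=|G:N|$, completing the argument. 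The one genuinely nontrivial ingredient is this last divisibility $\hat\chi(1)\mid|\Gamma:Z(\hat\chi)|$, the refinement of the classical theorem that character degrees divide the group order; I would establish it by the usual tensor-power trick, applying the central-character/algebraic-integer argument to a suitable power of $\hat\chi$ in order to factor out $Z(\hat\chi)$. The bookkeeping in the Clifford-correspondence step---tracking $|G:N|$ faithfully through the factors $|G:T|$ and $|T:N|$---needs a little care but is routine; the main conceptual obstacle is the central-subgroup divisibility just described.
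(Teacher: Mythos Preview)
Your argument is correct and follows the standard route: Clifford reduction to the inertia group, passage to an isomorphic character triple with central kernel, and then the divisibility $\hat\chi(1)\mid|\Gamma:Z(\hat\chi)|$. The paper itself gives no proof of this lemma at all---it simply cites \cite[Corollary~11.29]{Isaacs}---so there is nothing to compare against beyond noting that your outline is exactly the proof Isaacs gives for that corollary (via Theorem~11.28 and Theorem~3.12 in his book).
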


\begin{lemma}[Gallagher]\label{Gallagher} Let $N\unlhd G$ and $\chi\in \Irr(G)$. If
$\chi_N\in\Irr(N)$ then $\chi\tau\in\Irr(G)$ for every
$\tau\in\Irr(G/N)$.
\end{lemma}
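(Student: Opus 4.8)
The plan is to prove the irreducibility of each $\chi\tau$ by a single inner-product computation that simultaneously yields irreducibility and mutual distinctness. First I would inflate each $\tau\in\Irr(G/N)$ to a character of $G$ through the quotient map $G\to G/N$; then $\tau$ is constant on every coset of $N$, with $\tau(n)=\tau(1)$ for $n\in N$, and $\chi\tau$ is a genuine character of $G$ of degree $\chi(1)\tau(1)$. Since a character of norm one is automatically irreducible, it suffices to show $[\chi\tau,\chi\tau]_G=1$, and in fact I would compute the more general pairing $[\chi\tau,\chi\tau']_G$ for arbitrary $\tau,\tau'\in\Irr(G/N)$, as this settles both irreducibility and distinctness at once.

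Expanding the inner product gives
\[
[\chi\tau,\chi\tau']_G=\frac{1}{|G|}\sum_{g\in G}|\chi(g)|^{2}\,\tau(g)\overline{\tau'(g)}.
\]
Because $\tau\overline{\tau'}$ is constant on each coset of $N$, I would regroup the sum over $G/N$ and factor out the coset-constant term, so that everything is controlled by the coset average
\[
\Phi(g):=\frac{1}{|N|}\sum_{n\in N}|\chi(gn)|^{2},
\]
which reduces the pairing to
\[
[\chi\tau,\chi\tau']_G=\frac{1}{|G/N|}\sum_{gN\in G/N}\Phi(g)\,\tau(gN)\overline{\tau'(gN)}.
\]

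The heart of the matter is the claim $\Phi\equiv 1$, and this is the only place the hypothesis $\chi_N\in\Irr(N)$ is used. I would note that $\mu:=\chi\overline{\chi}$ is a genuine character of $G$ and that $\Phi$, being an average of $\mu$ over $N$-cosets, descends to a class function on $G/N$. Decomposing $\Phi=\sum_{\psi\in\Irr(G/N)}c_\psi\psi$ and using that each $\psi$ is coset-constant, a short computation identifies $c_\psi=[\mu,\psi]_G$, a nonnegative integer because $\mu$ is a character; in particular the trivial-character coefficient is $c_{1}=[\chi\overline{\chi},1_G]_G=[\chi,\chi]_G=1$. Evaluating at the identity and invoking the irreducibility of $\chi_N$ gives $\Phi(1)=\frac{1}{|N|}\sum_{n\in N}|\chi(n)|^{2}=[\chi_N,\chi_N]_N=1$. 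Since $\Phi(1)=\sum_\psi c_\psi\psi(1)$ is a sum of nonnegative terms whose trivial summand already equals $1$, every other $c_\psi$ must vanish, and hence $\Phi\equiv 1$.

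With $\Phi\equiv1$ the coset sum collapses to the orthogonality relation $[\chi\tau,\chi\tau']_G=[\tau,\tau']_{G/N}=\delta_{\tau\tau'}$. Taking $\tau'=\tau$ gives $[\chi\tau,\chi\tau]_G=1$, and as $\chi\tau$ is a genuine character of norm one it is irreducible, which is exactly the assertion; the case $\tau\neq\tau'$ shows in addition that $\tau\mapsto\chi\tau$ is injective into $\Irr(G)$. I expect the averaging identity $\Phi\equiv1$ to be the only genuine obstacle, the rest being formal manipulation of inner products. A slicker route to that identity is representation-theoretic: if $\rho$ affords $\chi$ on $V$, then $\frac{1}{|N|}\sum_{n\in N}\rho(n)(\cdot)\rho(n)^{-1}$ is the $G$-equivariant projection of $\End(V)$ onto its $N$-fixed subspace $\End_N(V)$, which by Schur's lemma is the line $\CC\cdot\id_V$ on which $G$ acts trivially; computing the trace of the $g$-action through this projection again returns $\Phi(g)=1$.
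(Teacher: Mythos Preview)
The paper does not actually prove this lemma; it merely states it with attribution to Gallagher (the result is Corollary~6.17 in Isaacs' \emph{Character Theory of Finite Groups}, which the paper cites as \cite{Isaacs}). Your argument is correct and is essentially the classical proof: reduce $[\chi\tau,\chi\tau']_G$ to a coset sum weighted by $\Phi(g)=\frac{1}{|N|}\sum_{n\in N}|\chi(gn)|^2$, then show $\Phi\equiv 1$ using that $\chi\overline\chi$ is a genuine character whose restriction to $N$ has trivial-constituent multiplicity equal to $[\chi_N,\chi_N]_N=1$. The alternative representation-theoretic route you sketch at the end (via the $N$-averaging projection onto $\End_N(V)=\CC\cdot\id_V$) is also valid and is in fact how many treatments motivate the identity $\Phi\equiv 1$.
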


\begin{lemma}[Thompson]\label{Thompson} Suppose that $p$ is a prime and $p\mid \chi(1)$ for
every nonlinear $\chi\in\Irr(G)$. Then $G$ has a normal
$p$-complement.
\end{lemma}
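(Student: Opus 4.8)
The statement is Thompson's classical normal $p$-complement theorem, so the plan is to reconstruct a self-contained character-theoretic proof by induction on $|G|$. The guiding observation is that, under the hypothesis, the irreducible characters of $p'$-degree are \emph{exactly} the linear characters of $G$, and these are precisely the characters of $G/G'$. The hypothesis is inherited by every quotient $G/N$, since $\Irr(G/N)\subseteq\Irr(G)$ and a nonlinear character of $G/N$ is a nonlinear character of $G$; hence in a minimal counterexample every proper quotient has a normal $p$-complement. From this I would first reduce to $O_{p'}(G)=1$: if $1\neq N$ were a normal $p'$-subgroup, the normal $p$-complement of $G/N$ pulls back to one of $G$, a contradiction. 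I may also assume $G$ is nonabelian, the abelian case being immediate.

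The main character-theoretic input is the second orthogonality relation applied to the regular character. For $g\neq 1$ one has $\sum_{\chi\in\Irr(G)}\chi(1)\chi(g)=0$; splitting off the linear part and using that every nonlinear $\chi(1)$ is divisible by $p$ while each $\chi(g)$ is an algebraic integer, one obtains $\sum_{\lambda(1)=1}\lambda(g)\equiv 0\pmod{\mathfrak{p}}$ for a prime ideal $\mathfrak{p}$ lying above $p$. Since the left-hand side is the value at $g$ of the regular character of $G/G'$, it equals $|G:G'|$ when $g\in G'$ and $0$ otherwise; choosing $g\in G'\setminus\{1\}$ (possible as $G'\neq 1$) forces $p\mid|G:G'|$. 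Thus $G$ has a normal subgroup of index $p$, equivalently $O^p(G)<G$, which is the structural foothold for the induction.

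Next I would pick a minimal normal subgroup $V$; since $O_{p'}(G)=1$ we have $p\mid|V|$. If some such $V$ is central it has order $p$, and the argument closes cleanly: $G/V$ has a normal $p$-complement $K/V$, so $V$ is a central Sylow $p$-subgroup of $K$, whence $K=V\times O_{p'}(K)$ with $O_{p'}(K)$ characteristic in $K$ and therefore normal in $G$; this forces either $O_{p'}(G)\neq 1$ or $G$ a $p$-group, both contradictions. In the remaining cases I would use Clifford theory relative to $V$. The counting identity $|\Irr(V)|-1=|V|-1\not\equiv 0\pmod p$ shows that the orbits of $G$ on the nontrivial characters of $V$ cannot all have $p$-divisible size, so there is a nontrivial $\lambda\in\Irr(V)$ whose inertia group $T=\Stab_G(\lambda)$ has $p'$-index; for nonabelian $V$ one instead starts from a nontrivial $p'$-degree constituent of $\Irr(V)$, available since each simple factor possesses a nontrivial irreducible character of $p'$-degree. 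The Clifford correspondent of a suitable $\psi\in\Irr(T|\lambda)$ then induces irreducibly to $G$ with degree $|G:T|\,\psi(1)$, and the aim is to make this character simultaneously nonlinear and of $p'$-degree, contradicting the hypothesis.

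The hard part is exactly the degree bookkeeping in this last step: having arranged $|G:T|$ to be prime to $p$, one must still guarantee a constituent in $\Irr(T|\lambda)$ of degree $>1$ but prime to $p$. This can fail for careless choices—for instance when the relevant projective representations of $T/V$ all have $p$-divisible degree—so the congruence alone does not finish the proof. It is here that Thompson's finer analysis enters: one exploits coprime action on $V$, uses Schur--Zassenhaus to split the relevant extension over $V$, and chooses $\lambda$ carefully within its $p$-regular orbit so as to force the forbidden nonlinear character of $p'$-degree and thereby contradict minimality, completing the induction.
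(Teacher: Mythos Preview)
The paper does not supply a proof of this lemma at all; it is stated as a classical result attributed to Thompson and used as a black box (the standard reference is Isaacs, \emph{Character Theory of Finite Groups}, Corollary~12.2). So there is no ``paper's proof'' to compare against, only your attempt to reconstruct one.

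Your initial reductions are sound: the hypothesis does pass to quotients, the pull-back argument gives $O_{p'}(G)=1$ in a minimal counterexample, and your orthogonality computation correctly yields $p\mid |G:G'|$. The treatment of a central minimal normal subgroup of order $p$ is also fine. The problem is exactly where you say it is. In the non-central case you locate a nontrivial $\lambda\in\Irr(V)$ whose inertia group $T$ has $p'$-index, and you then need some $\psi\in\Irr(T\mid\lambda)$ which is simultaneously nonlinear and of $p'$-degree so that $\psi^{G}$ contradicts the hypothesis. As you yourself note, nothing you have arranged forces such a $\psi$ to exist: the projective representations of $T/V$ entering via Lemma~\ref{lemmaHuppert2} may all have degree divisible by $p$, and for nonabelian $V$ your appeal to a $p'$-degree character of each simple factor does not by itself control the degrees of the constituents above it. Invoking ``Thompson's finer analysis'' and Schur--Zassenhaus at the end is a placeholder, not an argument; the gap is genuine and substantial, and closing it is essentially the whole content of the theorem.

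If you want to complete a proof along inductive lines, it is cleaner to work with the normal $p$-complement $K/V$ of $G/V$ rather than with an arbitrary minimal normal $V$ and its inertia groups: one shows that the complement $H$ to $V$ in $K$ (which exists by Schur--Zassenhaus) must centralise $V$, whence $H=O_{p'}(K)\trianglelefteq G$. Alternatively, simply cite Isaacs, Corollary~12.2, as the paper does.
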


\begin{lemma}[\cite{Isaacs}, Lemma 12.3 and Theorem 12.4]\label{lemmaIsaacs1} Let $N\lhd G$ be maximal such that $G/N$ is solvable
and nonabelian. Then one of the following holds.
\begin{enumerate}
\item[(i)] $G/N$ is a $r$-group for some prime $r$. If $\chi\in \Irr(G)$ and
$r\nmid \chi(1)$, then $\chi \tau\in\Irr(G)$ for all
$\tau\in\Irr(G/N)$.
\item[(ii)] $G/N$ is a Frobenius group with an elementary abelian
Frobenius kernel $F/N$. Thus $|G:F|\in \cd(G)$, $|F:N|=r^a$ where
$a$ is the smallest integer such that $|G:F|\mid r^a-1$. For every
$\psi\in\Irr(F)$, either $|G:F|\psi(1)\in\cd(G)$ or $|F:N|\mid
\psi(1)^2$. If no proper multiple of $|G:F|$ is in $\cd(G)$, then
$\chi(1)\mid |G:F|$ for all $\chi\in\Irr(G)$ such that $r\nmid
\chi(1)$.
\end{enumerate}
\end{lemma}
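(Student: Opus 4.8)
The plan is to first read off, from the maximality of $N$, the precise group-theoretic structure of $G/N$, and then extract the two character-degree statements by Clifford theory; the two alternatives (i) and (ii) will correspond to whether or not $F/N$ is central in $G/N$.

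\emph{Structural reduction.} First I would note that if $\hat N\lhd G$ with $N\subsetneq \hat N$, then $G/\hat N$ is a quotient of the solvable group $G/N$, hence solvable, so by maximality of $N$ it must be abelian; thus $(G/N)'\le \hat N/N$ for every such $\hat N$. Applying this with $\hat N/N$ a minimal normal subgroup of $G/N$, and using that two distinct minimal normal subgroups intersect trivially, I conclude that $G/N$ has a \emph{unique} minimal normal subgroup $F/N$ and that $(G/N)'=F/N$. Since $G/N$ is solvable, $F/N$ is elementary abelian, say $|F/N|=r^a$, and $G/F=(G/N)/(F/N)$ is abelian.

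\emph{The dichotomy.} Since $Z(G/N)\cap(F/N)$ is normal in $G/N$, it is either trivial or all of $F/N$; so either $F/N\le Z(G/N)$ or $Z(G/N)=1$. In the first case $(G/N)'=F/N\le Z(G/N)$ makes $G/N$ nilpotent, and a nilpotent group with a unique minimal normal subgroup is a $p$-group, forcing $G/N$ to be an $r$-group; this is (i). In the second case I claim $C:=C_{G/N}(F/N)=F/N$. One checks that $C$ is a normal nilpotent $r$-subgroup (a Sylow factor for any other prime would be characteristic in $C$ and would furnish a second minimal normal subgroup), so $C=F(G/N)$; a coprime-action argument then finishes it—using $(G/N)'=F/N$ to see that $G/F$ centralizes $F(G/N)/(F/N)$, and using fixed-point-freeness on $F/N$ to exhibit an abelian complement to $F/N$ inside $F(G/N)$, which forces $F(G/N)$ to be abelian and hence $C=F/N$. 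Then $G/F$ acts faithfully and irreducibly on $F/N$; being abelian it is cyclic by Schur's lemma and acts fixed-point-freely, so $G/N$ is Frobenius with kernel $F/N$ and complement $G/F$. Identifying $F/N$ with $\FF_{r^a}$ and $G/F$ with a subgroup of $\FF_{r^a}^{*}$ gives $|G:F|\mid r^a-1$ with $a$ minimal, which is (ii). I expect this coprime-action step to be the main structural obstacle.

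\emph{Character-degree consequences.} In case (i), let $\chi\in\Irr(G)$ with $r\nmid\chi(1)$ and let $\psi$ be a constituent of $\chi_N$; since $G/N$ is an $r$-group, $\chi(1)/\psi(1)$ divides $|G:N|$ by Lemma~\ref{lemmaIsaacs} and is therefore an $r$-power, so $r\nmid\chi(1)$ forces $\chi(1)=\psi(1)$, i.e.\ $\chi_N=\psi\in\Irr(N)$; Lemma~\ref{Gallagher} then yields $\chi\tau\in\Irr(G)$ for all $\tau\in\Irr(G/N)$. In case (ii), inducing a nontrivial linear character of $F/N$ up to $G/N$ (its inertia group is $F/N$ by the Frobenius action) produces an irreducible of degree $|G:F|$, so $|G:F|\in\cd(G)$. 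For $\psi\in\Irr(F)$ with inertia group $T=I_G(\psi)$: if $T=F$ then $\psi^G\in\Irr(G)$ has degree $|G:F|\psi(1)$; if $T\supsetneq F$, a nontrivial element of $T/F\le G/F$ stabilizes $\psi$ while acting fixed-point-freely on $F/N$, which forces the symplectic form attached to $\psi$ on $F/N$ to be nondegenerate and hence $|F:N|\mid\psi(1)^2$. Finally, if no proper multiple of $|G:F|$ lies in $\cd(G)$ and $\chi\in\Irr(G)$ has $r\nmid\chi(1)$, pick $\psi\in\Irr(F)$ under $\chi$: the alternative $|F:N|\mid\psi(1)^2$ would give $r\mid\psi(1)\mid\chi(1)$, which is impossible, so $|G:F|\psi(1)\in\cd(G)$, whence $\psi(1)=1$ by hypothesis, and then Lemma~\ref{lemmaIsaacs} gives $\chi(1)=\chi(1)/\psi(1)\mid|G:F|$. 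The most delicate point is the Clifford-theoretic bookkeeping for $\psi\in\Irr(F)$ in case (ii), namely the passage from fixed-point-free action of the inertia to nondegeneracy of the form.
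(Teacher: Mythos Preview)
The paper does not prove this lemma; it is quoted verbatim from Isaacs's \emph{Character Theory of Finite Groups} (Lemma~12.3 and Theorem~12.4), so there is no in-paper argument to compare against. Your structural reduction and the dichotomy between the $r$-group case and the Frobenius case follow the standard line, and the coprime-action outline you give for $C_{G/N}(F/N)=F/N$ (pass to the Fitting subgroup, show it is an $r$-group, split off a complement to $F/N$ via the coprime action of a cyclic $r'$-complement, then invoke uniqueness of the minimal normal subgroup) is the correct shape, though the sentence ``which forces $F(G/N)$ to be abelian and hence $C=F/N$'' skips the actual punchline: the complement you produce is \emph{normal} in $G/N$, and uniqueness of $F/N$ then forces it to be trivial.

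There is, however, a genuine gap in your Clifford-theoretic step for part~(ii). You assert that $T=I_G(\psi)\supsetneq F$ forces the ``symplectic form attached to $\psi$ on $F/N$'' to be nondegenerate and hence $|F:N|\mid\psi(1)^2$. This implication is false. Take $N=1$, $F$ elementary abelian of order $r^a$, and $G/F$ cyclic of order dividing $r^a-1$ acting irreducibly; then for $\psi=1_F$ one has $T=G\supsetneq F$ while $\psi(1)^2=1$. Any reasonable form attached to the trivial character is identically zero, so your nondegeneracy claim fails. The dichotomy in the lemma still holds in this example, but only because $|G{:}F|\cdot 1=|G{:}F|\in\cd(G)$ for other reasons; it does not hold via your mechanism. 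In general the relevant alternating form lives on $I_F(\lambda)/N$ for a constituent $\lambda$ of $\psi_N$, its radical need not be trivial, and Isaacs's actual proof of Theorem~12.4 does not split on $T=F$ versus $T\supsetneq F$ at all. Your final paragraph deriving $\chi(1)\mid|G{:}F|$ from the dichotomy is fine once the dichotomy itself is in hand.
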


\begin{lemma}\label{lemma1} In the context of $(ii)$ of
Lemma~\ref{lemmaIsaacs1}, we have
\begin{enumerate}
\item[(i)]  If $\chi\in\Irr(G)$ such that
$\lcm(\chi(1),|G:F|)$ does not divide any character degree of $G$,
then $r^a\mid \chi(1)^2$.
\item[(ii)] If $\chi\in\Irr(G)$ such that no proper multiple of
$\chi(1)$ is a degree of $G$, then either $|G:F|\mid \chi(1)$ or
$r^a\mid \chi(1)^2$
\end{enumerate}
\end{lemma}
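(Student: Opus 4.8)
The plan is to derive both parts from Clifford theory applied to the Frobenius kernel $F$ (note $F\unlhd G$, since $F/N$ is normal in the Frobenius group $G/N$), combined with the dichotomy recorded in Lemma~\ref{lemmaIsaacs1}(ii) and the divisibility in Lemma~\ref{lemmaIsaacs}. In each case I would fix an irreducible constituent $\psi$ of the restriction $\chi_F$. By Clifford's theorem $\psi(1)\mid\chi(1)$, and by Lemma~\ref{lemmaIsaacs} the quotient $\chi(1)/\psi(1)$ divides $|G:F|$. Recalling that $|F:N|=r^a$, Lemma~\ref{lemmaIsaacs1}(ii) applied to this $\psi$ gives the alternative: either $|G:F|\psi(1)\in\cd(G)$, or $r^a\mid\psi(1)^2$.

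For part (i), I would rule out the first horn of that alternative. Since $\chi(1)/\psi(1)$ divides $|G:F|$, both $\chi(1)$ and $|G:F|$ divide $|G:F|\psi(1)$; hence $\lcm(\chi(1),|G:F|)$ divides $|G:F|\psi(1)$. If $|G:F|\psi(1)$ were a character degree of $G$, this would contradict the hypothesis that $\lcm(\chi(1),|G:F|)$ divides no degree of $G$. So we must have $r^a\mid\psi(1)^2$, and therefore $r^a\mid\chi(1)^2$ because $\psi(1)\mid\chi(1)$.

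For part (ii), I would assume $|G:F|\nmid\chi(1)$ and set $d:=\chi(1)/\psi(1)$, a divisor of $|G:F|$. Then $d\neq|G:F|$, for otherwise $|G:F|=d$ would divide $\chi(1)$; hence $|G:F|/d>1$ and $|G:F|\psi(1)=(|G:F|/d)\chi(1)$ is a proper multiple of $\chi(1)$. By the hypothesis that no proper multiple of $\chi(1)$ lies in $\cd(G)$, we get $|G:F|\psi(1)\notin\cd(G)$, so again $r^a\mid\psi(1)^2$ and hence $r^a\mid\chi(1)^2$.

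I do not expect any genuine obstacle: this is a short formal consequence of the results already quoted. The only points needing a moment's care are the lcm bookkeeping in (i) and, in (ii), noticing that $d\neq|G:F|$ makes $d$ a \emph{proper} divisor of $|G:F|$, so that $|G:F|\psi(1)$ is genuinely a proper multiple of $\chi(1)$; both become immediate once one uses that $\chi(1)/\psi(1)$ divides $|G:F|$.
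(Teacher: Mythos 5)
Your argument is correct and follows essentially the same route as the paper: fix a constituent $\psi$ of $\chi_F$, use Lemma~\ref{lemmaIsaacs} to get $\chi(1)/\psi(1)\mid|G:F|$, and then apply the dichotomy of Lemma~\ref{lemmaIsaacs1}(ii) to $\psi$, ruling out the horn $|G:F|\psi(1)\in\cd(G)$ via the lcm hypothesis in (i) and via the ``no proper multiple'' hypothesis in (ii). The paper's part (ii) is just the contrapositive phrasing of yours (assuming $|G:F|\psi(1)\in\cd(G)$ and concluding $|G:F|\mid\chi(1)$), so there is nothing substantive to add.
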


\begin{proof} (i) Suppose that $\psi$ is a constituent of $\chi_F$.
Lemma~\ref{lemmaIsaacs} then implies that $\chi(1)/\psi(1)\mid
|G:F|$. In other words, $\chi(1)\mid |G:F|\psi(1)$. Since
$\lcm(\chi(1),|G:F|)$ does not divide any character degree of $G$,
we see that $|G:F|\psi(1)$ is not a degree of $G$. This forces $r^a
\mid \psi(1)^2$ by Lemma~\ref{lemmaIsaacs1}(ii). Since $\psi(1)$
divides $\chi(1)$, it follows that $r^a$ divides $\chi(1)^2$.

(ii) Let $\psi\in\Irr(F)$ be an irreducible constituent of $\chi_F$.
By Lemma~\ref{lemmaIsaacs1}(ii), either $|G:F|\psi(1)\in\cd(G)$ or
$r^a\mid \psi(1)^2$. If the latter case holds then we are done. So
we assume that $|G:F|\psi(1)\in\cd(G)$. Write $\chi(1)=k\phi(1)$ for
some integer $k$. It follows by Lemma~\ref{lemmaIsaacs} that $k\mid
|G:F|$. Therefore, $|G:F|\chi(1)/k=|G:F|\phi(1)\in\cd(G)$. As no
proper multiple of $\chi(1)$ is in $\cd(G)$, it follows that
$|G:F|=k$ and hence $|G:F|\mid \chi(1)$.
\end{proof}

\begin{lemma}\emph{(\cite[Lemma~5]{Bianchi}).}\label{lemma2} Let $N=S\times\cdot\cdot\cdot\times S$, a direct product
of $k$ copies of a nonabelian simple group $S$, be a minimal normal
subgroup of $K$. If $\chi\in\Irr(S)$ extends to $\Aut(S)$, then
$\chi(1)^k$ is a character degree of $K$.
\end{lemma}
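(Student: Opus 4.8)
The plan is to realize $\chi(1)^k$ as the degree of an irreducible character of $K$ obtained by inflation from $\bar K:=K/C_K(N)$, which we view inside the wreath product $W:=\Aut(S)\wr\Sym_k$. Since $N$ is a minimal normal subgroup of $K$ and the factors $S_i$ are the only minimal normal subgroups of $N$, conjugation makes $K$ act transitively on $\{S_1,\dots,S_k\}$. Because $S$ is nonabelian simple, $Z(N)=1$, so conjugation embeds $N$ into $\bar K$ and embeds $\bar K$ into $\Aut(N)\cong W$, whose base group is $B:=\Aut(S_1)\times\cdots\times\Aut(S_k)$; under this identification the image of $N$ in $\bar K$ is $\Inn(N)=\Inn(S_1)\times\cdots\times\Inn(S_k)\le B$.

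Next I would invoke the hypothesis. Fix an extension $\hat\chi\in\Irr(\Aut(S))$ of $\chi$ and form the $\Sym_k$-invariant character $\hat\theta:=\hat\chi\times\cdots\times\hat\chi\in\Irr(B)$ of degree $\chi(1)^k$. The classical wreath-product construction --- take a module $V$ affording $\hat\chi$, form $V^{\otimes k}$, and let $\Sym_k$ permute the tensor factors --- extends $\hat\theta$ to a character $\hat\Theta$ of $W$; since $\hat\Theta|_B=\hat\theta$ is irreducible, $\hat\Theta\in\Irr(W)$ with $\hat\Theta(1)=\chi(1)^k$. Moreover $\hat\chi$ restricts to $\chi\in\Irr(\Inn(S))$, so $\hat\Theta|_{\Inn(N)}=\chi\times\cdots\times\chi\in\Irr(\Inn(N))$.

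I would then restrict $\hat\Theta$ to $\bar K$ and check it remains irreducible. If $\eta$ is any irreducible constituent of $\hat\Theta|_{\bar K}$, then every irreducible constituent of $\eta|_{\Inn(N)}$ is a constituent of the single irreducible character $\hat\Theta|_{\Inn(N)}=\chi\times\cdots\times\chi$, so $\chi(1)^k$ divides $\eta(1)$ and hence $\chi(1)^k\le\eta(1)\le\hat\Theta(1)=\chi(1)^k$. Thus $\hat\Theta|_{\bar K}=\eta\in\Irr(\bar K)$ has degree $\chi(1)^k$, and inflating $\eta$ along $K\twoheadrightarrow\bar K$ gives $\Psi\in\Irr(K)$ with $\Psi(1)=\chi(1)^k$; that is, $\chi(1)^k\in\cd(K)$.

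The one nonroutine ingredient is the existence of the extension $\hat\Theta$ of $\hat\theta$ to the full wreath product $W$ --- but this is classical, being exactly the tensor-induced module $V^{\otimes k}$ --- and the rest is bookkeeping, chiefly verifying that the image of $N$ in $\bar K$ lies in the base group $B$ and restricts there to $\chi\times\cdots\times\chi$. An alternative that avoids the wreath product is to first show $\chi$ extends to $H:=N_K(S_1)$ (which again follows from the hypothesis, via $H/C_K(S_1)\hookrightarrow\Aut(S)$) and then to obtain $\Psi$ by tensor-inducing that extension from $H$ to $K$; I expect the wreath-product route to be the cleaner one to write out carefully.
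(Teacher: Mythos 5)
Your proof is correct and follows essentially the same route as the paper's: embed $K/C_K(N)$ into $\Aut(N)\cong\Aut(S)\wr S_k$, extend $\hat\chi\times\cdots\times\hat\chi$ to the wreath product, and restrict back to $K/C_K(N)$, where irreducibility is forced by the irreducibility of the restriction to $N$. The only difference is that you make explicit the tensor-induction ($V^{\otimes k}$) construction of the extension to the wreath product, which the paper merely asserts from invariance.
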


\begin{proof} Note that $N$ can be considered as a subgroup of
$K/C_K(N)$ and $K/C_K(N)$ is embedded in $\Aut(N)=\Aut(S)\wr S_k$.
Let $\lambda$ be an extension of $\chi$ to $\Aut(S)$. Since
$\lambda\times\cdot\cdot\cdot\times \lambda$ is invariant under
$\Aut(S)\wr S_k$, it is extendable to $\Aut(S)\wr S_k$. Hence the
character $\chi\times\cdot\cdot\cdot\times \chi\in\Irr(N)$ is
extendable to $\Aut(S)\wr S_k$. In particular, it can be extended to
$K/C_K(N)$. The lemma follows.
\end{proof}

\begin{lemma}\emph{(\cite[Lemma~3]{Huppert1}).}\label{lemmaHuppert2} Let $M\unlhd
G$, $\theta\in\Irr(M)$, and $I=I_G(\theta)$. If $\phi\in\Irr(I)$
lying above $\theta$, then $\phi=\theta_0\tau$, where $\theta_0$ is
a character of an irreducible projective representation of $I$ of
degree $\theta(1)$ and $\tau$ is a character of an irreducible
projective representation of $I/M$.
\end{lemma}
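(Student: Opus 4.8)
The statement is a standard fact from the theory of projective representations, and the plan is to follow the argument Isaacs uses to set up the Clifford correspondence for invariant characters (\cite[Chapter~11]{Isaacs}); I would present it as a short self-contained proof rather than a bare citation.

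First I would observe that since $I=I_G(\theta)$ the character $\theta$ is $I$-invariant, so by the projective extension theorem (\cite[Theorem~11.2]{Isaacs}) there is a projective representation $\mathcal{P}$ of $I$, with some factor set $\alpha$, such that $\mathcal{P}_M$ is an \emph{ordinary} representation affording $\theta$ and $\alpha(g,m)=\alpha(m,g)=1$ for all $g\in I$ and $m\in M$. Put $\theta_0(g)=\tr(\mathcal{P}(g))$; since $\mathcal{P}_M$ affords the irreducible character $\theta$, the representation $\mathcal{P}$ is an irreducible projective representation of $I$ and $\theta_0$ has degree $\theta(1)$. The normalization of $\alpha$ on $M$, combined with the cocycle identity, forces $\alpha(gm,g'm')=\alpha(g,g')$ for all $g,g'\in I$ and $m,m'\in M$, so $\alpha$ is inflated from a factor set $\bar\alpha$ of $I/M$.

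Next, let $X$ be an irreducible ordinary representation of $I$ affording $\phi$. Since $\phi$ lies over the $I$-invariant character $\theta$, Clifford's theorem gives $X_M\cong e\,\mathcal{P}_M$ for some positive integer $e$. Because $\mathcal{P}_M$ is an honest representation, the map $g\mapsto \mathcal{P}(g)^{-1}X(g)$ can be shown to commute with the image of $X|_M$, and a suitable change of basis then yields $X(g)=\mathcal{P}(g)\otimes \mathcal{Q}(g)$, where $\mathcal{Q}$ is a projective representation of $I$ with $\mathcal{Q}(m)$ equal to the $e\times e$ identity matrix for all $m\in M$; comparing factor sets shows $\mathcal{Q}$ has factor set $\alpha^{-1}$. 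Hence $\mathcal{Q}$ is inflated from a projective representation $\bar{\mathcal{Q}}$ of $I/M$ with factor set $\bar\alpha^{-1}$, and irreducibility of $X$ forces $\bar{\mathcal{Q}}$ to be irreducible. Taking $\tau(g)=\tr(\mathcal{Q}(g))$ gives $\phi=\theta_0\tau$ with $\tau$ the character of an irreducible projective representation of $I/M$, which is exactly the claim.

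The hard part is the passage from $X_M\cong e\,\mathcal{P}_M$ to the tensor decomposition $X=\mathcal{P}\otimes\mathcal{Q}$ with $\mathcal{Q}$ trivial on $M$: this genuinely uses that $\mathcal{P}$ was chosen so that its restriction to $M$ is an ordinary (not merely projective) representation affording $\theta$, which is precisely what the normalization in \cite[Theorem~11.2]{Isaacs} provides; the remainder is routine bookkeeping with factor sets.
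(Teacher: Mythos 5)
The paper offers no proof of this lemma at all: it is quoted verbatim as Lemma~3 of \cite{Huppert1}, whose own proof is exactly the standard projective-representation argument from \cite[Chapter~11]{Isaacs} that you reproduce. Your write-up is correct and is essentially that canonical proof (existence of the normalized projective extension $\mathcal{P}$ via \cite[Theorem~11.2]{Isaacs}, constancy of the factor set on cosets of $M$, and the tensor decomposition $X\cong\mathcal{P}\otimes\mathcal{Q}$ with $\mathcal{Q}$ trivial on $M$ and factor set $\alpha^{-1}$). The only blemish is notational: $\mathcal{P}(g)^{-1}X(g)$ is not a product of matrices of matching sizes when $e>1$; the object that commutes with $X_M$ is $\bigl(\mathcal{P}(g)^{-1}\otimes I_e\bigr)X(g)$, from which Schur's lemma applied to the irreducible $\mathcal{P}_M$ gives $X(g)=\mathcal{P}(g)\otimes\mathcal{Q}(g)$ as you conclude.
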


\begin{lemma}\emph{(\cite[Lemma~6]{Huppert1}).}\label{lemmaHuppert1} Suppose $M\unlhd G'=G''$ and
$\lambda^g=\lambda$ for all $g\in G'$ and $\lambda\in\Irr(M)$ such
that $\lambda(1)=1$. Then $M'=[M,G']$ and $|M:M'|$ divides the order
of the Schur multiplier of $G'/M$.
\end{lemma}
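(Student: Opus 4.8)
The plan is to follow the standard argument for controlling the abelian section $M/M'$ when $G'$ is quasi-perfect. First I would reduce to the case $M' = 1$, i.e., to showing that if $M$ is abelian and $G'$-invariant on characters then $M = [M,G']$ and $|M|$ divides $|\Mult(G'/M)|$; the general case follows by passing to $G'/M'$ acting on $M/M'$, since $[M,G']M'/M' = [M/M', G'/M']$ and the hypothesis on linear characters descends. So assume $M$ is abelian. Every $\lambda \in \Irr(M)$ is linear and, by hypothesis, fixed by $G'$, so $M$ is a \emph{central}-type situation: the conjugation action of $G'$ on $\Irr(M)$ is trivial, which by duality forces $G'$ to act trivially on $M$ itself when $M$ is abelian — wait, more carefully, the action of $G'$ on $\Irr(M)$ being trivial is equivalent to the action of $G'$ on $M$ being trivial, hence $[M, G'] = 1$. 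But we also want $M = [M,G']$, so this would give $M = 1$ in the split situation; the point is that in general $M$ need not be a direct factor, and the correct statement is about $M/M'$, so let me restructure.

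Here is the cleaner route. Set $\bar{M} = M/M'$ and $\bar{G} = G'/M'$, so $\bar{M} \unlhd \bar{G}$ is abelian and $\bar{G}/\bar{M} \cong G'/M$ is perfect (since $G' = G''$, $G'/M$ is perfect for any normal $M$). The hypothesis that every linear character of $M$ is $G'$-invariant means every character of $\bar M$ is $\bar G$-invariant, equivalently $\bar G$ centralizes $\bar M$, so $\bar M \le Z(\bar G)$. Thus $\bar G$ is a central extension of the perfect group $P := G'/M$ by the abelian group $\bar M = M/M'$. Now I invoke the universal property of the Schur multiplier: since $\bar G / \bar M \cong P$ is perfect, $\bar G$ is a quotient of... no — the relevant fact is that for a perfect group $P$, any central extension of $P$ has its "commutator part" controlled by the covering group. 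Concretely, $[\bar G, \bar G]$ is a perfect central extension of $P$ (it maps onto $P$ with central kernel $[\bar G,\bar G] \cap \bar M$), hence is a quotient of the universal cover $\hat P$; and $\bar G = [\bar G, \bar G] \bar M$ because $\bar G/[\bar G,\bar G]$ is abelian while $\bar G/\bar M$ is perfect forces $[\bar G,\bar G]\bar M = \bar G$. Translating back: $[\bar G, \bar G]\bar M = \bar G$ gives $[G',G']M' \cdot M = G'$, but $[G',G'] = G'' = G'$ already, so this is automatic — I need instead to extract that $\bar M \cap [\bar G, \bar G] = \bar M$, i.e. $\bar M \le [\bar G,\bar G]$, which follows since $\bar G = [\bar G, \bar G]$ once we note $\bar G$ is perfect: indeed $\bar G/\bar M = P$ perfect and $\bar M$ central give $\bar G = [\bar G, \bar G]\bar M$, and then $\bar G/[\bar G,\bar G]$ is a quotient of the central $\bar M$, abelian; but also $G' = G''$ means $\bar G = [\bar G, \bar G]$ outright. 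Hence $\bar M \le \bar G = [\bar G, \bar G]$, which reads $M \le [G',G']M' = G' M' = G'$ — trivial — and more usefully $[M, G']M'/M' = \bar M$, i.e. $[M,G'] M' = M$. Combined with $[M,G'] \le M'$? No: we want $[M,G'] = M'$. Since $M' \le [M,G']$ is false in general... Actually $[M,G']$ need not contain $M'$; rather $M' = [M,M] \le [M, G']$. Yes — $M \le G'$, so $[M,M] \le [M,G']$, giving $M' \le [M,G']$, and with $[M,G']M' = M$ we'd get... that's not enough either. Let me just say: the equality $[M,G'] = [M, G']M' = M$ will follow because $\bar M = [\bar M, \bar G \bmod M']$ — hmm.

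I'll be honest about where the real content sits and not grind the commutator bookkeeping here: the \textbf{main obstacle} is correctly identifying $[M,G']$ with $M'$, and the mechanism is the universal coefficient / Schur multiplier argument. The clean statement is: since $\bar G = G'/M'$ satisfies $\bar G = [\bar G, \bar G]$ (as $G' = G''$) and $\bar M = M/M'$ is central in $\bar G$, the group $\bar G$ is a perfect central extension of $P = G'/M$; then $\bar M = Z(\bar G) \cap \ker(\bar G \to P) $ embeds, as a quotient, into the center of the universal central extension $\hat P$, whose kernel is $\Mult(P)$ — more precisely $\bar G$ is a quotient of $\hat P$ by a central subgroup, so $\bar M$ is a quotient of $\Mult(P) = \Mult(G'/M)$, whence $|M:M'| = |\bar M|$ divides $|\Mult(G'/M)|$. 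Finally, $M' = [M,G']$: because $\bar G$ is perfect and a quotient of $\hat P$, and in $\hat P$ the kernel over $P$ equals $[\hat P, \hat P] \cap (\text{kernel})$ is the whole kernel, pulling back shows $\bar M \le [\bar G, \bar G \bmod \text{stuff}]$ — concretely, for each $m \in M$ there exist $g_i \in G'$, $m_i \in M$ with $m \equiv \prod [m_i, g_i] \pmod{M'}$, since the extension is perfect; hence $M = [M,G'] M'$ and as $M' = [M,M] \le [M,G']$ we conclude $M = [M,G']$. This is exactly the argument of \cite[Lemma~6]{Huppert1}, and I would simply cite it there, reconstructing only this central-extension skeleton.
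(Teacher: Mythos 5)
The paper does not actually prove this lemma---it is quoted from \cite[Lemma~6]{Huppert1}---so the only question is whether your reconstruction is sound. Your two central ideas are the right ones: (a) the duality between the $G'$-action on the abelian group $M/M'$ and its action on the character group $\Irr(M/M')$, and (b) the fact that a perfect central extension of the perfect group $G'/M$ is a quotient of its universal central extension, so that the central kernel $M/M'$ is a quotient of $\Mult(G'/M)$. Part (b), and hence the divisibility statement $|M:M'|\mid|\Mult(G'/M)|$, is argued correctly (perfection of $G'/M'$ follows from $G'=G''$, and centrality of $M/M'$ from the hypothesis via (a)).

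However, your final derivation of $M'=[M,G']$ is wrong. You assert that, because the extension is perfect, every $m\in M$ satisfies $m\equiv\prod_i[m_i,g_i]\pmod{M'}$ with $m_i\in M$ and $g_i\in G'$, and conclude $M=[M,G']M'$ and then $M=[M,G']$. This contradicts what you correctly established earlier: since every character of $M/M'$ is $G'$-invariant, $G'$ centralizes $M/M'$, i.e.\ $[M,G']\le M'$, so every commutator $[m_i,g_i]$ is trivial modulo $M'$ and $[M,G']M'=M'$, not $M$. (Perfection of $G'/M'$ only says each element is a product of commutators $[x,y]$ with $x,y\in G'$ arbitrary, not with one entry in $M$.) Moreover $M=[M,G']$ is not the lemma's claim and is false in general: take $M=Z(\SL_2(5))$ inside $G'=\SL_2(5)$, where $[M,G']=1\ne M$. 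The correct conclusion is immediate from pieces you already have: $[M,G']\le M'$ by centrality of $M/M'$ in $G'/M'$, and $M'=[M,M]\le[M,G']$ because $M\le G'$; hence $M'=[M,G']$. The universal central extension is needed only for the Schur-multiplier half of the lemma, not for this identity.
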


The following result is an easy consequence of \cite[Theorem
$6.18$]{Isaacs}.

\begin{lemma}\label{fully ramified}
Let $M\unlhd L\unlhd G$ be normal subgroups of a group $G$ such that
$L/M$ is an abelian chief factor of $G.$ Let $\theta\in\Irr(M)$ such
that $\theta$ is $L$-invariant and let $\lambda$ be an irreducible
constituent of $\theta^L.$  Suppose that $\lambda(1)>\theta(1)$  and
$\lambda$ is $G$-invariant. Then
$\lambda(1)/\theta(1)=\sqrt{|L:M|}.$
\end{lemma}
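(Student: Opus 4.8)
The plan is to pit the $G$-invariance of $\lambda$ against the irreducibility of the $G$-module $L/M$. Writing $e:=\lambda(1)/\theta(1)$, Clifford's theorem (using that $\theta$ is $L$-invariant) gives $\lambda_M=e\theta$, and the crux will be to show that $\lambda$ vanishes off $M$, after which a norm computation yields $e^{2}=|L:M|$.

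First I would introduce the action of the group $\widehat{L/M}$ of linear characters of $L/M$ on $\Irr(L|\theta)$ by multiplication: each $\nu\lambda$ is irreducible and still lies over $\theta$, since $\nu$ is trivial on $M$. Put $T:=\{\nu\in\widehat{L/M}:\nu\lambda=\lambda\}$, a subgroup of $\widehat{L/M}$. For $g\in G$ one has $(\nu\lambda)^{g}=\nu^{g}\lambda^{g}=\nu^{g}\lambda$, so $T$ is stable under the conjugation action of $G$ on $\widehat{L/M}$. Now $L/M$, being an abelian chief factor of $G$, is an elementary abelian $p$-group on which $G$ acts irreducibly (i.e. with no proper nonzero $G$-invariant subgroup); hence its dual $\widehat{L/M}$ is likewise irreducible over $G$, so $T=1$ or $T=\widehat{L/M}$.

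Next I would exclude $T=1$. In that case the $|L:M|$ characters $\nu\lambda$ ($\nu\in\widehat{L/M}$) are pairwise distinct, and each occurs in $\theta^{L}$ with multiplicity $[\theta,(\nu\lambda)_M]=[\theta,\lambda_M]=e$; comparing degrees gives $|L:M|\theta(1)=\theta^{L}(1)\ge|L:M|\cdot e\cdot\lambda(1)=|L:M|e^{2}\theta(1)$, so $e\le1$, contrary to $\lambda(1)>\theta(1)$. Therefore $T=\widehat{L/M}$, i.e. $\nu\lambda=\lambda$ for all $\nu\in\widehat{L/M}$. Given $x\in L\setminus M$, choose $\nu$ with $\nu(x)\ne1$; then $\lambda(x)=(\nu\lambda)(x)=\nu(x)\lambda(x)$ forces $\lambda(x)=0$. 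Thus $\lambda$ vanishes on $L\setminus M$, and hence
\[ 1=[\lambda,\lambda]_{L}=\frac{|M|}{|L|}\,[\lambda_M,\lambda_M]_{M}=\frac{e^{2}}{|L:M|}, \]
so $\lambda(1)/\theta(1)=e=\sqrt{|L:M|}$, as required. This last implication — a $G$-invariant character lying over an $L$-invariant $\theta$ but not restricting to $\theta$ must be fully ramified across the abelian chief factor, with square index $|L:M|$ — is exactly what Isaacs' Theorem 6.18 supplies; alternatively one can reach it by passing, via an isomorphism of character triples, to the case where $M$ is central and $\theta$ linear, which does not affect the displayed identity.

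I do not anticipate a serious obstacle: the argument uses only Clifford theory together with the fact that the dual of an irreducible module is irreducible. The single point requiring care is the passage to Theorem 6.18 (equivalently, in the self-contained version, checking that the $G$-action and the $\widehat{L/M}$-action on $\Irr(L|\theta)$ are compatible so that $T$ is $G$-invariant); the rest is routine bookkeeping.
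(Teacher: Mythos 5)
Your argument is correct. The paper's own proof is essentially a two-line citation: Clifford theory (using that $\theta$ is $L$-invariant) gives $\lambda_M=e\theta$, the hypothesis $\lambda(1)>\theta(1)$ forces $e>1$, and then Isaacs' Theorem~6.18, applied to the $G$-invariant character $\lambda$ of $L$ restricted across the abelian chief factor $L/M$, excludes the split cases and yields $e^2=|L:M|$. You begin the same way but then, instead of quoting Theorem~6.18, you reprove exactly the portion of it that is needed: the stabilizer $T$ of $\lambda$ under multiplication by $\widehat{L/M}$ is a $G$-stable subgroup of the dual of the irreducible $G$-module $L/M$, hence is trivial or everything; the trivial case is excluded by a degree count in $\theta^L$ (each of the $|L:M|$ distinct translates $\nu\lambda$ occurs with multiplicity $e$, forcing $e\le 1$), and the remaining case makes $\lambda$ vanish off $M$, whence $1=[\lambda,\lambda]_L=e^2/|L:M|$. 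All the steps check out: $\nu\lambda$ lies over $\theta$ because $\nu$ is trivial on $M$, the multiplicity $[\theta^L,\nu\lambda]=e$ is computed correctly, the $G$-stability of $T$ does use the $G$-invariance of $\lambda$, and the order-reversing correspondence between $G$-submodules of $L/M$ and of its dual is standard. What your route buys is self-containedness and a clear accounting of where each hypothesis enters (the $G$-invariance of $\lambda$ gives $G$-stability of $T$; chief-factor minimality gives the dichotomy); what the citation buys is brevity. In substance your argument is the standard proof of the fully ramified alternative in Theorem~6.18, so the two proofs are close in spirit even though yours is written out from scratch.
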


\begin{proof} As $\theta$ is $L$-invariant, by Clifford theory, we have that $\lambda_M=e\theta$
for some integer $e.$ Since $\lambda(1)>\theta(1),$  we deduce that
$e>1.$ By \cite[Theorem $6.18$]{Isaacs} we obtain that $e^2=|L:M|$
and so $\lambda(1)=e\theta(1)=\sqrt{|L:M|}\theta(1)$ as required.
\end{proof}

\begin{lemma}\label{Schur cover} Let $S$ be a nonabelian simple group. Let $G$ be a perfect group so that $G/M\cong S$ and $|M|=|\Mult(S)|$ where $M$ is
cyclic. Then $G$ is isomorphic the Schur cover of $S$.
\end{lemma}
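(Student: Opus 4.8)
The plan is to use the universal property of the Schur cover together with the fact that a perfect central extension of $S$ by a cyclic group of order $|\Mult(S)|$ is as large as possible. First I would observe that since $G/M \cong S$ and $G$ is perfect, the extension $1 \to M \to G \to S \to 1$ is a perfect central extension (one should check $M \leq Z(G)$: if $M$ were not central, $[M,G]$ would be a proper nontrivial $G$-invariant subgroup of $M$, but this needs $M$ cyclic of prime-power-free structure only in the coprime case, so more carefully one argues that $M \cap G' = M$ forces, via the five-term exact sequence or a direct commutator argument, that $M$ lies in the center). Actually the cleanest route is: since $G$ is perfect, $G$ is a quotient of the universal central extension (Schur cover) $\widehat{S}$ of $S$; write $\widehat{S}/Z \cong S$ with $Z = \Mult(S)$, and let $\pi: \widehat{S} \twoheadrightarrow G$ be the covering map. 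Then $\pi(Z) \leq M$ since $Z$ maps into the kernel of $\widehat{S} \to S$, which corresponds to $M$.

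Next I would compare orders. We have $|\widehat{S}| = |S|\cdot|\Mult(S)|$ and $|G| = |S|\cdot|M| = |S|\cdot|\Mult(S)|$, so $|\widehat{S}| = |G|$, and therefore $\pi$ is an isomorphism. This is the whole argument once the reduction to "$G$ is a quotient of $\widehat S$" is in place; the hypothesis that $M$ is cyclic is what guarantees $|M| = |\Mult(S)|$ is compatible — in fact it is not even needed for the order count, but it pins down that $M \cong Z$ as abstract groups, which matters if one wants the conclusion phrased as "$G \cong \widehat S$" rather than merely "$|G| = |\widehat S|$ and $G$ perfect." The key input is the standard theorem (Isaacs, or Suzuki, Group Theory) that every perfect group with $G/Z(G) \cong S$ is a homomorphic image of $\widehat S$, equivalently that $\widehat S$ is the universal object among perfect central extensions of $S$.

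The main obstacle — really the only subtlety — is verifying that $M$ is central in $G$, so that we are genuinely dealing with a \emph{central} extension and may invoke the universal property. I would handle this by noting $G' = G$ implies, via the exact sequence $H_2(S) \to M/[M,G] \to G/[G,G] \cdot (\text{stuff})$... more concretely: $M \cap G' = M$ because $M \leq G = G'$; combined with $M \trianglelefteq G$ and $G/M$ simple, if $[M,G] \neq 1$ then $[M,G] = M$ (as $[M,G]$ is normal in $G$ and contained in $M$, and any proper subgroup would have to be trivial only if $M$ were... here cyclicity of $M$ does real work: the subgroups of cyclic $M$ form a chain, $[M,G]$ is $G$-invariant, and one shows $[M,G]=M$ contradicts $G$ being a finite perfect group with this $M$ because then $G$ would be a \emph{non-central} extension whose perfection forces a contradiction with $M$ being the full preimage structure). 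The slick alternative that sidesteps all of this: invoke directly the characterization that the Schur cover $\widehat S$ is the unique (up to isomorphism) perfect group $G$ with a central subgroup $M$ such that $G/M \cong S$ and $|M| = |\Mult(S)|$ — and show our $M$ is forced to be central because $S$ is simple and $G$ is perfect, a fact recorded in Isaacs' text; I would simply cite that. Once centrality is granted, the order comparison closes the proof immediately.
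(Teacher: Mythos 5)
Your overall strategy --- first establish $M\leq Z(G)$, then invoke the universal property of the Schur cover together with the order count $|G|=|S|\cdot|M|=|S|\cdot|\Mult(S)|=|\widehat{S}|$ --- is in substance the same as the paper's; the second half of your argument is correct and is exactly what the paper compresses into the phrase ``by definition $G$ is the Schur cover.'' The difficulty is that the one step you yourself identify as ``really the only subtlety,'' namely the centrality of $M$, is never actually carried out. None of the sketches you offer closes: the $[M,G]$ discussion goes in circles, and your final fallback --- citing as a known fact that $M$ is forced to be central ``because $S$ is simple and $G$ is perfect'' --- is not a correct statement with only those hypotheses. For instance, $(\ZZ_2)^3\rtimes\GL_3(2)$ is perfect and has simple quotient $\GL_3(2)\cong\PSL_2(7)$ by the normal abelian subgroup $(\ZZ_2)^3$, which is not central. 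It is the cyclicity of $M$ that does all the work here, and it yields a one-line proof you circle around but never write down: since $M$ is cyclic, $\Aut(M)$ is abelian, so the conjugation homomorphism $G\to\Aut(M)$ has abelian image and therefore kills $G=G'$; hence the action is trivial and $M\leq Z(G)$.

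For comparison, the paper's centrality argument is a slight variant of this: $C_G(M)$ is a normal subgroup of $G$ containing $M$ (as $M$ is abelian), so by simplicity of $G/M$ either $C_G(M)=G$, giving $M\leq Z(G)\cap G'$ directly, or $C_G(M)=M$, in which case $G/M$ would embed into the abelian (hence solvable) group $\Aut(M)$, contradicting nonabelian simplicity. Either version is fine; once $M\leq Z(G)\cap G'$ is in hand, your reduction to a quotient of the universal central extension followed by the order comparison correctly finishes the proof.
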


\begin{proof} As $M$ is normal in $G=G'$ and $M$ is cyclic, $C_G(M)$ is normal in $G$ and contains $M$. Since $G/M$ is simple, we obtain that $C_G(M)=G$ or $C_G(M)=M$.
If the former case holds, then $M\leq Z(G)\cap G'$ so that by
definition $G$ is the Schur cover of $G/M$. For the latter
possibility, $G/C_G(M)=G/M$ embeds into $\Aut(M)$, which is solvable
while $G/M$ is simple, a contradiction.
\end{proof}


\section{Character degrees of the simple linear groups
$\textrm{PSL}_4(q)$}\label{preparation2}

Let $\Phi_k$ denote the $k$th cyclotomic polynomial evaluated at
$q$. In particular, $$\Phi_1=q-1,\Phi_2=q+1,\Phi_3=q^2+q+1, \text{
and } \Phi_4=q^2+1.$$ The data in \cite{Lubeck} gives the character
degrees of $\textrm{SL}_4(q)$ and $\textrm{PGL}_4(q)$. From there,
we are able to extract the character degrees of $\PSL_4(q)$. These
degrees are given in Table~\ref{character}. The word ``possible'' in
the second column means that the condition for the existence of
corresponding degree is fairly complicated.

\begin{table}[h]\caption{Character degrees of $\PSL_4(q)$ (see \cite{Lubeck}).}\label{character}
\begin{tabular}{lll}
Degrees & Conditions\\\hline
$1$ & any $q$\\
 $q\Phi_3$ & any $q$\\
$\Phi_2\Phi_4$&$q=4$ or $q\geq 6$\\
 $\Phi_1^2\Phi_3$& any $q$\\
 $\frac{1}{2}\Phi_1^2\Phi_3$& possible \\
  $q^2\Phi_4$&any $q$\\
   $\Phi_3\Phi_4$&$q\geq4$\\
   $\frac{1}{2}\Phi_3\Phi_4$& possible \\
    $\Phi_1\Phi_3\Phi_4$&any $q$\\
     $q^3\Phi_3$&any $q$\\
      $q\Phi_3\Phi_4$&$q\geq3$\\
       $q\Phi_2^2\Phi_4$&$q=4$ or $q\geq 6$\\
       $\Phi_2\Phi_3\Phi_4$ & $q\geq4$\\
       $\Phi_1^3\Phi_2\Phi_3$ & any $q$\\
       $c\Phi_1^3\Phi_2\Phi_3$ & $c=1/2, 1/4$, possible\\\hline
\end{tabular}\quad\quad
\begin{tabular}{lll}
Degrees & Conditions\\\hline
       $q^2\Phi_1^2\Phi_3$ & any $q$\\
       $\frac{1}{2}q^2\Phi_1^2\Phi_3$ & possible\\
       $\Phi_1^2\Phi_3\Phi_4$ & $q\geq4$\\
       $c\Phi_1^2\Phi_3\Phi_4$ & $c=1/2, 1/4$, possible\\
       $\Phi_1^2\Phi_2^2\Phi_4$ & any $q$\\
       $q^6$ & any $q$\\
       $q\Phi_1\Phi_3\Phi_4$ & any $q$\\
       $\Phi_1\Phi_2\Phi_3\Phi_4$ & $q\geq3$\\
       $\frac{1}{2}\Phi_1\Phi_2\Phi_3\Phi_4$ & possible\\
       $q^3\Phi_2\Phi_4$ & $q=4$ or $q\geq 6$\\
       $q^2\Phi_3\Phi_4$ & $q\geq3$\\
       $\frac{1}{2}q^2\Phi_3\Phi_4$ & possible\\
       $q\Phi_2\Phi_3\Phi_4$ & $q\geq4$\\
       $\Phi_2^2\Phi_3\Phi_4$ & $q\geq7$\\
       $c\Phi_2^2\Phi_3\Phi_4$ & $c=1/2, 1/4$, possible\\\hline
\end{tabular}
\end{table}

We establish some arithmetic properties of character degrees of
$\textrm{PSL}_4(q)$, which will be needed in sections~\ref{step 1}
and~\ref{step 2}. Recall that a power is nontrivial power if it has
exponent greater than one. Also, a \emph{primitive prime divisor} of
$\Phi_k$ is a prime divisor of $\Phi_k$ that does not divide
$\Phi_i$ for every $1\leq i<k$. This prime exists by the classical
result of Zsigmondy in~\cite{Zsigmondy}.

\begin{lemma}\label{ppd} Let $\ell_i,i=1,2$ be primitive prime divisors of $\Phi_3$ and $\Phi_4,$ respectively.
If $\chi\in\Irr(\PSL_4(q))$ and $(\chi(1),\ell_1\ell_2)=1,$ then
$\chi(1)=q^6.$
\end{lemma}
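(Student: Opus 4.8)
The strategy is to go through the list of character degrees in Table~\ref{character} and check, for each one, whether it is divisible by $\ell_1$ or by $\ell_2$ (where $\ell_1 \mid \Phi_3$ and $\ell_2 \mid \Phi_4$ are primitive prime divisors). Any degree that is \emph{not} coprime to $\ell_1\ell_2$ cannot be $\chi(1)$; what remains should be exactly $q^6$. First I would record the basic facts I will use repeatedly: since $\ell_1$ is a primitive prime divisor of $\Phi_3 = q^2+q+1$, it divides $\Phi_3$ but does not divide $q$, $\Phi_1 = q-1$, $\Phi_2 = q+1$, or $\Phi_4 = q^2+1$; similarly $\ell_2 \mid \Phi_4$ but $\ell_2 \nmid q, \Phi_1, \Phi_2, \Phi_3$. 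Moreover $\ell_1, \ell_2$ are odd primes (in fact $\ell_1 \geq 7$ and $\ell_2 \geq 5$ since $q \geq 13$), so the factors of $1/2$ or $1/4$ appearing in some degrees do not affect divisibility by $\ell_1$ or $\ell_2$.

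With these observations in hand, the main step is a case-by-case scan of the table. Every degree in the table is (up to a unit factor $1$, $1/2$, or $1/4$) a monomial in $q$, $\Phi_1$, $\Phi_2$, $\Phi_3$, $\Phi_4$. A degree is divisible by $\ell_1$ precisely when $\Phi_3$ appears in it, and divisible by $\ell_2$ precisely when $\Phi_4$ appears in it. Inspecting the two columns of Table~\ref{character}, every listed degree contains at least one of $\Phi_3$ or $\Phi_4$ as a factor — \emph{except} for $q^6$. For instance $\Phi_1^2\Phi_2^2\Phi_4$ is divisible by $\ell_2$, $q^3\Phi_3$ is divisible by $\ell_1$, $q\Phi_3\Phi_4$ is divisible by both, and so on. The only degree free of both $\Phi_3$ and $\Phi_4$ is $q^6$, which is a power of $p$ (the defining characteristic) and hence coprime to the primitive prime divisors $\ell_1\ell_2$. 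Therefore if $(\chi(1), \ell_1\ell_2) = 1$, the only possibility from the table is $\chi(1) = q^6$.

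The potential obstacle is not mathematical depth but completeness and care: one must be sure the table in \cite{Lubeck} (as reproduced here) genuinely lists \emph{all} irreducible character degrees of $\PSL_4(q)$, including the "possible" conditional degrees, and that none of the degrees hidden behind the vague word "possible" could secretly be coprime to $\ell_1\ell_2$. Since each conditional degree in the table still carries an explicit $\Phi_3$ or $\Phi_4$ factor in its displayed form (the only ambiguity being the rational coefficient $c \in \{1/2, 1/4\}$, which is irrelevant to divisibility by the odd primes $\ell_1, \ell_2$), this concern is handled by the same scan. Thus the proof reduces to the single verification that $q^6$ is the unique entry of Table~\ref{character} not containing $\Phi_3$ or $\Phi_4$.
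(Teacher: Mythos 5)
Your argument is exactly the paper's: the authors simply say the lemma is ``a straightforward check from Table~\ref{character},'' and your scan of the table (noting that $\ell_1,\ell_2$ are coprime to $q$, $\Phi_1$, $\Phi_2$ and to each other's cyclotomic factor, and that the coefficients $1/2$, $1/4$ are harmless since $\ell_1,\ell_2$ are odd) is precisely that check carried out in detail. The only caveat, shared with the paper's own statement, is that the degree $1$ is also coprime to $\ell_1\ell_2$, so the lemma implicitly concerns nonlinear characters — harmless in every application made of it.
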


\begin{proof} This is a straightforward check from
Table~\ref{character}.
\end{proof}

\begin{lemma}\label{maximal degree among degrees} The degrees
$\Phi_1^3\Phi_2\Phi_3$ and $\Phi_1^2\Phi_2^2\Phi_4$ are maximal with
respect to divisibility among the degrees of $\PSL_4(q)$.
\end{lemma}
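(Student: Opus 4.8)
The plan is to verify directly from Table~\ref{character} that no character degree of $\PSL_4(q)$ is a proper multiple of either $\Phi_1^3\Phi_2\Phi_3$ or $\Phi_1^2\Phi_2^2\Phi_4$. The natural tool is a crude size estimate: for $q \geq 13$ one has $\Phi_1 = q-1$, $\Phi_2 = q+1$, $\Phi_3 = q^2+q+1$, $\Phi_4 = q^2+1$, all comparable to powers of $q$, and the two candidate degrees are both of order roughly $q^6$ (indeed $\Phi_1^3\Phi_2\Phi_3 \sim q^6$ and $\Phi_1^2\Phi_2^2\Phi_4 \sim q^6$, matching the Steinberg degree $q^6$). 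Any proper multiple would have size at least $2q^6(1-o(1))$, which already exceeds the largest entry in the table once one checks that every listed degree is bounded above by something like $q^6(1+O(1/q))$; combined with the fact that $q^6$ itself is not divisible by any $\Phi_k$ with $k \geq 2$, and that a degree divisible by $\Phi_1^3\Phi_2\Phi_3$ (resp.\ $\Phi_1^2\Phi_2^2\Phi_4$) must carry rather specific cyclotomic factors, this should close the argument.

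Concretely, I would proceed as follows. First, I list the degrees in the table that are actually divisible by $D_1 := \Phi_1^3\Phi_2\Phi_3$: scanning the $\Phi$-factorizations, the only candidates are $D_1$ itself and the variants $c\,\Phi_1^3\Phi_2\Phi_3$ with $c = 1/2, 1/4$, which are \emph{smaller}, not larger, so no proper multiple among these. A genuine proper multiple would need an extra prime or cyclotomic factor beyond $\Phi_1^3\Phi_2\Phi_3$; but the only degrees in the table whose $\Phi$-content strictly contains $\Phi_1^3\Phi_2\Phi_3$ would have to include $\Phi_4$ or an extra power of $q$ or of some $\Phi_i$, and a term-by-term check shows no such entry exists (the entries involving $\Phi_4$ never simultaneously contain $\Phi_1^3$ and $\Phi_2$ and $\Phi_3$). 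The same bookkeeping handles $D_2 := \Phi_1^2\Phi_2^2\Phi_4$: the only table entries divisible by $\Phi_2^2$ are $\Phi_1^2\Phi_2^2\Phi_4$, $q\Phi_2^2\Phi_4$, $\Phi_2^2\Phi_3\Phi_4$ and its $c$-variants, and among these only $D_2$ is divisible by $\Phi_1^2$, while $\Phi_2^2\Phi_3\Phi_4$ is not a multiple of $D_2$ (it lacks $\Phi_1^2$) and has size $\sim q^6$, not $\geq 2q^6$.

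The one subtlety is the appearance of the fractional degrees $\tfrac12\Phi\cdots$ and $\tfrac14\Phi\cdots$: one must be careful that ``divisibility'' is meant in the sense appropriate to these degrees (they are genuine integers when they occur), and that, e.g., $\tfrac12 q^2\Phi_1^2\Phi_3$ or $\tfrac12\Phi_1\Phi_2\Phi_3\Phi_4$ is not accidentally a multiple of $D_1$ or $D_2$ — but each of these is at most of order $q^6$ and in any case lacks the required cyclotomic factors (a halved degree divisible by $\Phi_1^3\Phi_2\Phi_3$ would force the unhalved one to be divisible by $2\Phi_1^3\Phi_2\Phi_3$, of size $> 2q^6$, exceeding all table entries). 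I expect the main obstacle to be purely organizational: making the case analysis over the roughly thirty table entries airtight and transparent rather than a tedious enumeration. A clean way to package it is to first prove the size bound ``every $\chi(1) \in \cd(\PSL_4(q))$ satisfies $\chi(1) < 2\Phi_1^3\Phi_2\Phi_3$ and $\chi(1) < 2\Phi_1^2\Phi_2^2\Phi_4$ for $q \geq 13$'' by comparing leading terms, which instantly rules out all proper \emph{integer} multiples of size $\geq 2D_i$, and then separately dispatch the few entries of size comparable to $D_i$ by noting their cyclotomic factorizations are not divisible by $D_i$.
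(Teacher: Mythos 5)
Your proposal is correct and amounts to the same routine verification from Table~\ref{character} that the paper performs: the paper's one-line proof just packages the check via the coprimality facts $(q,\Phi_1^3\Phi_2\Phi_3)=1$, $(\Phi_4,\Phi_1^3\Phi_2\Phi_3)\mid 2$ and $(q,\Phi_1^2\Phi_2^2\Phi_4)=(\Phi_3,\Phi_1^2\Phi_2^2\Phi_4)=1$, which kill every table entry carrying a $q$-power or the offending cyclotomic factor, while you lead with the equivalent size bound that every degree is less than twice either candidate for $q\geq 13$. Both routes are sound; no gap.
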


\begin{proof} The degree $\Phi_1^3\Phi_2\Phi_3$ is maximal with
respect to divisibility among the degrees of $\PSL_4(q)$ since $
(q,\Phi_1^3\Phi_2\Phi_3)=1$ and
$(\Phi_4,\Phi_1^3\Phi_2\Phi_3)\mid2$. Also, the degree
$\Phi_1^2\Phi_2^2\Phi_4$ is maximal with respect to divisibility
among degrees of $\textrm{PSL}_4(q)$ since
$(q,\Phi_1^2\Phi_2^2\Phi_4)=(\Phi_3,\Phi_1^2\Phi_2^2\Phi_4)=1$.
\end{proof}

\begin{lemma}\label{helpful lemma} The following assertions hold.
\begin{enumerate}
\item[(i)]  If $\chi(1)$ and $\psi(1)$ are nontrivial degrees of
${\rm{PSL}}_4(q)$ such that $(\chi(1),\psi(1))$ $=1$, then the set
$\{\chi(1),\psi(1)\}$ contains at least one of the degrees:
$q\Phi_3$, $\Phi_1^2\Phi_3$, $\frac{1}{2}\Phi_1^2\Phi_3$,
$q^2\Phi_4$, $q^3\Phi_3$, $q^2\Phi_1^2\Phi_3$,
$\frac{1}{2}q^2\Phi_1^2\Phi_3$, $q^6$.

\item[(ii)] The only pairs of consecutive integers that are degrees
of  ${\rm{PSL}}_4(q)$ are $(q\Phi_3,$ $ \Phi_2\Phi_4)$ (when $q=4$
or $q\geq6$) and $(20,21)$ (when $q=2$).
\end{enumerate}
\end{lemma}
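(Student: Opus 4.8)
The plan is to extract both statements from Table~\ref{character}, organizing the work so as to sidestep a blind enumeration. Throughout I will use the elementary coprimality relations among the cyclotomic factors evaluated at $q$: one has $(q,\Phi_i)=1$ for every $i$, $(\Phi_2,\Phi_3)=(\Phi_3,\Phi_4)=1$, $(\Phi_1,\Phi_3)\mid 3$, and $(\Phi_i,\Phi_j)\mid 2$ in the remaining cases, all immediate from reductions such as $\Phi_3\equiv 3\pmod{\Phi_1}$, $\Phi_3\equiv 1\pmod{\Phi_2}$, and $\Phi_3-\Phi_4=q$. I also fix primitive prime divisors $\ell_1\mid\Phi_3$ and $\ell_2\mid\Phi_4$ (they exist by Zsigmondy, and $\ell_1$ is the one featured in Lemma~\ref{ppd}); since $\ell_1$ and $\ell_2$ are odd, distinct, and divide none of $q,\Phi_1,\Phi_2$, we get the dictionary: for an entry $d$ of Table~\ref{character}, $\ell_1\mid d$ precisely when $\Phi_3$ occurs in the displayed expression for $d$, and $\ell_2\mid d$ precisely when $\Phi_4$ occurs (the possible coefficients $1/2,1/4$ are irrelevant to divisibility by $\ell_1$ or $\ell_2$).

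For part (i) I argue by contradiction. Suppose that neither $\chi(1)$ nor $\psi(1)$ is among the eight listed degrees. Then in particular neither equals $q^6$, so Lemma~\ref{ppd} forces each of $\chi(1),\psi(1)$ to be divisible by $\ell_1$ or by $\ell_2$. Since $(\chi(1),\psi(1))=1$, they cannot both be divisible by $\ell_1$, nor both by $\ell_2$; hence, swapping if needed, $\ell_1\mid\chi(1)$, $\ell_2\nmid\chi(1)$, $\ell_2\mid\psi(1)$, $\ell_1\nmid\psi(1)$. By the dictionary, $\chi(1)$ is an entry of Table~\ref{character} containing $\Phi_3$ but not $\Phi_4$; a glance at the table shows that every such entry other than the six among our eight that involve $\Phi_3$ is $\Phi_1^3\Phi_2\Phi_3$ or $c\Phi_1^3\Phi_2\Phi_3$. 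Dually, $\psi(1)$ contains $\Phi_4$ but not $\Phi_3$, so apart from $q^2\Phi_4$ it is one of $\Phi_2\Phi_4$, $q\Phi_2^2\Phi_4$, $q^3\Phi_2\Phi_4$, $\Phi_1^2\Phi_2^2\Phi_4$. Every one of these remaining expressions is divisible by $\Phi_2=q+1$: this is immediate for those with integer coefficient, while $c\Phi_1^3\Phi_2\Phi_3$, being a genuine character degree, is an integer, which forces $q$ to be odd, hence $\Phi_1=q-1$ is even and $c\Phi_1^3\Phi_3$ is still an integer. Thus $\Phi_2$ divides both $\chi(1)$ and $\psi(1)$, contradicting $(\chi(1),\psi(1))=1$ since $\Phi_2\geq 3$.

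For part (ii) I bootstrap from (i). If $n$ and $n+1$ are both degrees then they are coprime, so by (i) one of them---say $\chi(1)$---is one of the eight special degrees, and its partner is $\chi(1)\pm 1$. One consecutive pair is built in, via $\Phi_2\Phi_4-q\Phi_3=(q^3+q^2+q+1)-(q^3+q^2+q)=1$, giving $(q\Phi_3,\Phi_2\Phi_4)$ whenever $\Phi_2\Phi_4\in\cd(\PSL_4(q))$, i.e. for $q=4$ or $q\geq 6$. To see nothing else occurs, I run through the eight values of $\chi(1)$ and, for each, through all entries $e$ of Table~\ref{character}, regarding $e-\chi(1)$ as a function of $q$. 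If it vanishes identically, $e$ is the same degree and not a consecutive partner. If it is a nonzero constant, inspection shows it equals $\pm 1$ only for the pair $q\Phi_3,\Phi_2\Phi_4$ above---most such differences factor transparently, e.g. $\Phi_3\Phi_4-q^2\Phi_4=\Phi_2\Phi_4$. Otherwise $e-\chi(1)$ is unbounded in $q$, so $|e-\chi(1)|=1$ only for $q$ below an explicit bound. This reduces the problem to finitely many small $q$, handled directly; the only additional consecutive pair that survives is $(20,21)$ at $q=2$, where $\PSL_4(2)\cong A_8$ and the character degrees are known.

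The only conceptual ingredient is in part (i), and it is short once Lemma~\ref{ppd} and the coprimality relations are available. The bulk of the work is the case bookkeeping in (ii): tabulating the differences $e-\chi(1)$ for the eight special $\chi(1)$ against the thirty-odd entries of Table~\ref{character}, and then clearing the remaining small values of $q$ by hand. That finite but somewhat tedious analysis, rather than any single difficult step, is the main obstacle.
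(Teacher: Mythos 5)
Your proof is correct and follows essentially the same route as the paper: both parts come down to inspecting Table~\ref{character}, isolating the degrees $c\Phi_1^3\Phi_2\Phi_3$ as the only non-excluded entries lacking a $\Phi_4$-factor, and then checking consecutive pairs. In fact your write-up supplies details the paper omits — the $\Phi_2$-divisibility argument neatly completes what the paper dismisses as ``a routine check,'' and your reduction of (ii) to the eight special degrees via (i) organizes what the paper calls ``obvious from the table.''
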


\begin{proof} (i) Assume by contradiction that neither $\chi(1)$ nor $\psi(1)$ is one of the degrees
$q\Phi_3$, $\Phi_1^2\Phi_3$, $\frac{1}{2}\Phi_1^2\Phi_3$,
$q^2\Phi_4$, $q^3\Phi_3$, $q^2\Phi_1^2\Phi_3$,
$\frac{1}{2}q^2\Phi_1^2\Phi_3$, $q^6$. Assume furthermore that both
of them are not in $\{c\Phi_1^3\Phi_2\Phi_3\mid c=1,1/2,1/4 \}$.
From the list of degrees of $\textrm{PSL}_4(q)$ we see that both
$\{\chi(1)$ and $\psi(1)\}$ are divisible by $\Phi_4$, which
violates the hypothesis. So at least one of $\{\chi(1)$ and
$\psi(1)\}$ is in $\{c\Phi_1^3\Phi_2\Phi_3\mid c=1,1/2,1/4 \}$. A
routine check gives the result.

(ii) This is obvious from Table~\ref{character}.
\end{proof}

\begin{lemma}\label{helpful lemma2} The following assertions hold.
\begin{enumerate}
\item[(i)] If $\chi\in\Irr({\rm{PSL}}_4(q))$ and
$\Phi_3\mid\chi(1)$ then $\chi(1)$ is not a nontrivial power.

\item[(ii)] The only possible degrees of ${\rm{PSL}}_4(q)$ which are
nontrivial powers are $\Phi_2\Phi_4$, $q\Phi_2^2\Phi_4$,
$\Phi_1^2\Phi_2^2\Phi_4$, $q^6$, $q^3\Phi_2\Phi_4$. In particular,
${\rm{PSL}}_4(q)$ has at most five nontrivial power degrees.
\end{enumerate}
\end{lemma}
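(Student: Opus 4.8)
The proof of both parts is a systematic inspection of Table~\ref{character}, resting on a few elementary facts about $\Phi_3=q^2+q+1$. First, $\Phi_3$ is always odd; $\gcd(\Phi_3,q)=\gcd(\Phi_3,\Phi_2)=\gcd(\Phi_3,\Phi_4)=1$ (any common prime divisor of $\Phi_3$ and $\Phi_4$ would divide $q$); and $\gcd(\Phi_3,\Phi_1)\mid 3$, with moreover $v_3(\Phi_3)\le 1$ --- writing $q=3j+1$ gives $\Phi_3=3(3j^2+3j+1)$ and $3j^2+3j+1\equiv 1\pmod 3$. Consequently, whenever a degree $\chi(1)$ in Table~\ref{character} is divisible by $\Phi_3$, it has the form $\chi(1)=\Phi_3\cdot D$ where $\Phi_3$ occurs to the first power only, $D$ is a product of powers of $q,\Phi_1,\Phi_2,\Phi_4$ divided by $1$, $2$ or $4$, and $\gcd(\Phi_3,D)$ is a power of $3$.

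For part~(i) I would argue by contradiction, assuming $\Phi_3\mid\chi(1)$ and $\chi(1)=m^k$ with $k\ge 2$. Since $\Phi_3/3^{v_3(\Phi_3)}>1$, there is a prime $\ell\ne 3$ dividing $\Phi_3$; then $\ell\nmid D$, so $v_\ell(\chi(1))=v_\ell(\Phi_3)$ and hence $k\mid v_\ell(\Phi_3)$. The plan is then to extract, from the complementary factor $D$ of each of the (some two dozen) $\Phi_3$-divisible degrees $q\Phi_3,\ \Phi_1^2\Phi_3,\ \frac{1}{2}\Phi_1^2\Phi_3,\ \Phi_3\Phi_4,\ \dots,\ c\Phi_2^2\Phi_3\Phi_4$ listed in Table~\ref{character}, a further prime whose exponent in $\chi(1)$ is rigid --- equal to $1$, or coprime to $v_\ell(\Phi_3)$ --- thereby forcing $k=1$. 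Here one uses that primitive prime divisors of $\Phi_1$, $\Phi_2$, $\Phi_4$ are pairwise coprime, coprime to $q$, and meet $\Phi_3$ only (if at all) in the primes $2$ or $3$, and that a leading factor $\frac{1}{2}$ or $\frac{1}{4}$ merely lowers $v_2$. The verification is then a routine, if somewhat lengthy, case check.

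For part~(ii), part~(i) already disposes of every degree divisible by $\Phi_3$. Scanning Table~\ref{character}, the degrees \emph{not} divisible by $\Phi_3$ are precisely $1$, $q^2\Phi_4$, $\Phi_2\Phi_4$, $q\Phi_2^2\Phi_4$, $\Phi_1^2\Phi_2^2\Phi_4$, $q^6$ and $q^3\Phi_2\Phi_4$. The degree $1$ is trivial, and $q^2\Phi_4=q^2(q^2+1)$ is never a nontrivial power: as $\gcd(q^2,q^2+1)=1$, a relation $q^2\Phi_4=m^k$ would force $q^2+1$ to be a nontrivial power, which is impossible because $q^2+1$ lies strictly between the consecutive squares $q^2$ and $(q+1)^2$ and is well known never to be a perfect $k$-th power for $k\ge 2$. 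The remaining five are exactly the degrees listed in the statement, and since these are the only candidates for nontrivial power degrees (for instance, $q^6$ is always a nontrivial power), it follows that $\PSL_4(q)$ has at most five nontrivial power degrees.

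The main obstacle is the case analysis in part~(i). For exceptional prime powers $q$ the numbers $\Phi_2=q+1$, $\Phi_4=q^2+1$, and even $\Phi_3/3^{v_3(\Phi_3)}$ --- through solutions of the Pell-type equation $q^2+q+1=3n^2$ --- can themselves be perfect powers, so one cannot argue uniformly that ``the $\Phi_3$-part must be a $k$-th power, a contradiction''. Instead one must check in each of the listed forms that the complementary factor always contributes a prime, coming from $q$, from $\Phi_1$, or from $\Phi_3$ itself, with an exponent incompatible with $\chi(1)$ being a nontrivial power; that is where most of the bookkeeping lies.
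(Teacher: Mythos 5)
Part (ii) of your proposal is essentially the paper's argument: granted (i), only the degrees prime to $\Phi_3$ survive, and $q^2\Phi_4=q^2(q^2+1)$ is excluded (the paper cites \cite{Erdos} for the product of consecutive integers; your appeal to $x^2+1=y^n$ having no solutions is an acceptable substitute, though note that ``lies between consecutive squares'' only handles even exponents, so you do need the full classical statement for odd exponents). That half is fine.

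Part (i), however, has a genuine gap. The paper's proof is short precisely because it accepts the reduction you reject: since every prime of $\Phi_3$ other than $3$ is coprime to the complementary factor and $v_3(\Phi_3)\le 1$, a nontrivial power degree divisible by $\Phi_3$ forces $\Phi_3$ or $\Phi_3/3$ to be a nontrivial power, and this is then killed by the Diophantine theorems on $x^2+x+1=y^n$ and $x^2+x+1=3y^n$ (\cite{Erdos}, \cite{Nagell}). You decline to invoke these and instead promise a ``routine'' case check extracting, from each complementary factor $D$, a prime with rigid exponent. That check cannot be completed by elementary exponent bookkeeping. Take $\chi(1)=q^3\Phi_3$ and $k=3$: the complementary factor $q^3$ is a perfect cube for \emph{every} $q$, and $\gcd(q^3,\Phi_3)=1$, so the whole question is whether $q^2+q+1$ is a cube --- exactly Nagell's equation, whose unique solution $q=18$ is excluded only because $18$ is not a prime power. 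There is no further prime to extract. The same happens for $q\Phi_3$ when $q=p^{3}$, and for $\Phi_1^2\Phi_3$, $q^2\Phi_1^2\Phi_3$, etc., with $k$ an odd prime dividing the relevant exponents. Your observation that $q^2+q+1=3n^2$ has Pell-type solutions (e.g.\ $q=313$ gives $\Phi_3=3\cdot 181^2$) is a sharp and legitimate point --- it shows the square case needs the extra input that $v_3(\chi(1))=1+b\,v_3(\Phi_1)$ is then odd for the degrees with even $\Phi_1$-exponent $b$, and a genuine check for odd $b$ --- but it does not license discarding the Diophantine results for odd exponents, where they are indispensable. As written, your proof of (i) defers its entire substance to a case analysis that cannot close without the very theorems the paper cites.
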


\begin{proof} (i) Remark that
$$(q,\Phi_3)=(\Phi_2,\Phi_3)=(\Phi_4,\Phi_3)=1$$ and
$$(\Phi_1,\Phi_3)= 1 \text{ or } 3.$$ Therefore, if $\Phi_3\mid
\chi(1)$ and $\chi(1)$ is a nontrivial power then either $\Phi_3$ or
$\Phi_3/3$ is a nontrivial power. However, this is impossible
by~\cite{Nagell}.

(ii) This is a corollary of (i) and the fact that
$q^2\Phi_4=q^2(q^2+1)$ is not a nontrivial power by~\cite{Erdos}.
\end{proof}

\begin{lemma}\label{cd(SL4) is not in cd(PSL)} If $q$ is odd then
$$\Phi_1\Phi_2\Phi_3\Phi_4/2\in\cd(\SL_4(q)) \text{ but }
\Phi_1\Phi_2\Phi_3\Phi_4/2\notin\cd(\PSL_4(q)).$$
\end{lemma}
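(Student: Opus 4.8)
\emph{Proof plan.} Both halves rest on the standard description of $\Irr(\SL_4(q))$ via restriction from $H:=\GL_4(q)$, together with L\"ubeck's data \cite{Lubeck}. Identify the dual group of $H$ with $H$ itself; for semisimple $s\in H$ let $\chi_s\in\Irr(H)$ denote the corresponding semisimple character, of degree $[H:C_H(s)]_{p'}$. I will use three standard facts. (a) The restriction $\chi_s|_{\SL_4(q)}$ is multiplicity-free, with $t_s:=|\{z\in\FQ^\times:\ sz\text{ is }H\text{-conjugate to }s\}|$ distinct irreducible constituents, each of degree $\chi_s(1)/t_s$. (b) Since $Z:=Z(\SL_4(q))$ is central in $H$ and acts by the scalar $\omega_{\chi_s}(z)$ in the representation affording $\chi_s$, every constituent of $\chi_s|_{\SL_4(q)}$ has the same restriction to $Z$, equal to $\omega_{\chi_s}|_Z$. (c) Viewing $\omega_{\chi_s}$ as a character of $Z(H)\cong\FQ^\times$, it corresponds under the canonical self-duality to $\det(s)\in\FQ^\times$; since $Z$ has order $d:=(4,q-1)$, the restriction $\omega_{\chi_s}|_Z$ is trivial exactly when $\det(s)\in(\FQ^\times)^d$.

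To produce the $\SL_4(q)$-degree I would take $s$ with eigenvalue multiset $\{a,-a,b,-b\}$, where $b\in\FQ^\times$ and $a\in\FQA$ satisfies $a^{q-1}=-1$ (such an $a$ exists since $q$ is odd). Then $C_H(s)\cong\GL_1(q^2)\times\GL_1(q)\times\GL_1(q)$, so $\chi_s(1)=\Phi_1\Phi_2\Phi_3\Phi_4$, and comparing eigenvalue multisets shows $sz\sim s\iff z=\pm1$, i.e.\ $t_s=2$; hence $\frac12\Phi_1\Phi_2\Phi_3\Phi_4\in\cd(\SL_4(q))$. (This degree also simply appears in L\"ubeck's list for $\SL_4(q)$.) For the second assertion I would argue by contradiction: if $\psi\in\Irr(\PSL_4(q))$ has degree $\frac12\Phi_1\Phi_2\Phi_3\Phi_4$ then, inflated to $\SL_4(q)$, it is a constituent of $\chi|_{\SL_4(q)}$ for some $\chi\in\Irr(H)$ of degree $t\cdot\frac12\Phi_1\Phi_2\Phi_3\Phi_4$ with $t\mid q-1$. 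As $\GL_4(q)$ has connected centre, $\frac12\Phi_1\Phi_2\Phi_3\Phi_4\notin\cd(H)$, and $\frac32\Phi_1\Phi_2\Phi_3\Phi_4$ exceeds every degree in L\"ubeck's list for $\GL_4(q)$ once $q\geq 13$; hence $t=2$ and $\chi(1)=\Phi_1\Phi_2\Phi_3\Phi_4$. Since $\chi(1)$ is coprime to $p$, $\chi=\chi_s$ for a semisimple $s$ with $|C_H(s)|_{p'}=\Phi_1^3\Phi_2$, and among the centralizer types $\prod_i\GL_{m_i}(q^{d_i})$ with $\sum_i m_id_i=4$ only $\GL_1(q^2)\times\GL_1(q)\times\GL_1(q)$ has this order; so $s$ has eigenvalues $\{\alpha,\alpha^q,b',c'\}$ with $\alpha\in\FQA\setminus\FQ$ and $b'\neq c'$ in $\FQ^\times$, and as before $t_s=2$ forces $\alpha^{q-1}=-1$ and $c'=-b'$. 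Then $\alpha^2\in\FQ^\times$ is a non-square (else $\alpha\in\FQ$), so $\det(s)=\alpha\cdot\alpha^q\cdot b'c'=\alpha^2 (b')^2$ is a non-square in $\FQ^\times$; in particular $\det(s)\notin(\FQ^\times)^2\supseteq(\FQ^\times)^d$. By (b) and (c) this makes $\psi$ non-trivial on $Z$, contradicting that $\psi$ comes from $\PSL_4(q)$; hence $\frac12\Phi_1\Phi_2\Phi_3\Phi_4\notin\cd(\PSL_4(q))$.

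The routine parts are the eigenvalue-multiset computations behind (a) and the elementary check classifying which centralizers $C_H(s)$ give degree $\Phi_1\Phi_2\Phi_3\Phi_4$; both can be read off \cite{Lubeck} directly. The step I expect to be the heart of the matter is fact (c) together with the observation that the factor $\frac12$ forces $\det(s)$ to be a non-square in $\FQ^\times$: this is exactly why the degree $\frac12\Phi_1\Phi_2\Phi_3\Phi_4$ is present in $\SL_4(q)$ yet absent from $\PSL_4(q)$, and it is the only place where the parity of $q$ and the structure of $Z(\SL_4(q))$ genuinely intervene.
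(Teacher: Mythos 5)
Your proposal is correct and follows essentially the same route as the paper: both realize the degree $\frac{1}{2}\Phi_1\Phi_2\Phi_3\Phi_4$ via a semisimple element with eigenvalue multiset $\{\alpha,-\alpha,\beta,-\beta\}$ and centralizer of type $\GL_1(q^2)\times\GL_1(q)\times\GL_1(q)$, and both exclude it from $\cd(\PSL_4(q))$ by noting that the determinant of any such element is a non-square in $\FF_q^\times$, so the corresponding character of $\SL_4(q)$ is nontrivial on the center. The only difference is packaging --- you restrict semisimple characters of $\GL_4(q)$ and invoke the central-character/determinant duality, whereas the paper labels semisimple characters of $\SL_4(q)$ by classes of the dual group $\PGL_4(q)$ and uses the criterion $\bar{s}\in\PGL_4(q)'$ --- and your explicit classification of the centralizers giving this degree fills in a step the paper merely asserts.
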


\begin{proof} Consider a semisimple element $s\in\GL_4(q)$ with
eigenvalues
$$\alpha,-\alpha,\omega^{(q+1)/2},-\omega^{(q+1)/2},$$ where $\alpha\in\FF_q\backslash\{0\}$ and $w$ is a generator of
$\FF_{q^2}\backslash\{0\}$. Then $$C_{\GL_4(q)}\cong
\GL_1(q)\times\GL_1(q)\times \GL_1(q^2).$$ Moreover,
$\overline{s}=sZ(\GL_4(q))$ is a semisimple element of $\PGL_4(q)$.
Note that if $\overline{t}\in C_{\PGL_4(q)}(\overline{s})$ then
$ts=st$ or $ts=-st$. Therefore,
$|C_{\PGL_4(q)}(\overline{s})|=2|C_{\GL_4(q)}|/(q-1)$. The
semisimple character $\chi_{\overline{s}}\in\Irr(\SL_4(q))$
associated to the conjugacy class of $\overline{s}$ has degree
$$\chi_{\overline{s}}(1)=\frac{|\SL_4(q)|_{p'}}{|C_{\PGL_4(q)}(\overline{s})|_{p'}}=\frac{\Phi_1\Phi_2\Phi_3\Phi_4}{2}.$$
Indeed, every irreducible character of $\SL_4(q)$ of degree
$\Phi_1\Phi_2\Phi_3\Phi_4/2$ is constructed in this way. Remark that
the determinant of $s$ is $\alpha^2\omega^{q+1}$. As $\omega^{q+1}$
is a generator of $\FF_q\backslash \{0\}$, $\alpha^2\omega^{q+1}\neq
1.$ In particular, $\overline{s}\notin \PGL_4(q)'$ and hence
$Z(\SL_4(q))\notin\Ker(\chi_{\overline{s}})$. We conclude that
$\chi_{\overline{s}}\notin\Irr(\PSL_4(q))$, which implies that
$\Phi_1\Phi_2\Phi_3\Phi_4/2\notin\cd(\PSL_4(q))$.
\end{proof}


\section{$G$ is quasi-perfect: $G'=G''$}\label{step 1}

Recall that $G$ is a finite group with the same character degree set
as $H=\textrm{PSL}_4(q)$. First, we show that $G'=G''$. Assume by
contradiction that $G'\neq G''$ and let $N\lhd G$ be maximal such
that $G/N$ is solvable and nonabelian. By Lemma~\ref{lemmaIsaacs1},
$G/N$ is an $r$-group for some prime $r$ or $G/N$ is a Frobenius
group with an elementary abelian Frobenius kernel $F/N$.

\medskip

\emph{Case 1.} $G/N$ is an $r$-group for some prime $r$. Since $G/N$
is nonabelian, there is $\theta\in\Irr(G/N)$ such that
$\theta(1)=r^b>1$. From the classification of prime power degree
representations of quasi-simple groups in \cite{Malle-Zalesskii}, we
deduce that $\theta(1)=r^b$ must be equal to the degree of the
Steinberg character of $H$ of degree $q^6$ and thus $r^b=q^6,$ which
implies that $r=p.$ By Lemma \ref{Thompson}, $G$ possesses a
nontrivial irreducible character $\chi$ with $p\nmid \chi(1).$
Lemma~\ref{lemmaIsaacs} implies that $\chi_N\in\Irr(N)$. Using
Gallagher's lemma, we deduce that $\chi(1)\theta(1)=q^6\chi(1)$ is a
degree of $G$, which is impossible.

\medskip

\emph{Case 2.} $G/N$ is a Frobenius group with an elementary abelian
Frobenius kernel $F/N$. Thus $|G:F|\in \cd(G)$, $|F:N|=r^a$ where
$a$ is the smallest integer such that $|G:F|\mid r^a-1$. Let $\chi$
be a character of $G$ of degree $q^6$. As no proper multiple of
$q^6$ is in $\cd(G)$, Lemma~\ref{lemma1} implies that either
$|G:F|\mid q^6$ or $r=p$. We consider the following subcases.

(a) $|G:F|\mid q^6.$ Then $|G:F|=q^6$ by Table~\ref{character}. This
means no multiple of $|G:F|$ is in $\cd(G)$. Therefore, by
Lemma~\ref{lemmaIsaacs1}, for every $\psi\in\Irr(G)$ either
$\psi(1)\mid q^6$ or $r\mid \psi(1)$. Taking $\psi$ to be characters
of degrees $q\Phi_3$ and $q^2\Phi_4$, we obtain that $r$ divides
both $\Phi_3$ and $\Phi_4$. This leads to a contradiction since
$(\Phi_3,\Phi_4)=1$.

(b) $r=p$. By Lemma~\ref{maximal degree among degrees} and the fact
that $p\nmid \Phi_1^3\Phi_2\Phi_3$ as well as
$\Phi_1^2\Phi_2^2\Phi_4$, we have $|G:F|$ divides both
$\Phi_1^3\Phi_2\Phi_3$ and $\Phi_1^2\Phi_2^2\Phi_4$. It follows that
$|G:F|$ is prime to $\ell_1\ell_2$ so that by Lemma \ref{ppd}, we
have $|G:F|=q^6$ as $|G:F|>1,$ which is impossible as $|G:F|$ is
prime to $p.$


\section{Eliminating the finite simple groups other than $\PSL_4(q)$}\label{step 2}

We have seen from section~\ref{step 1} that $G'=G''$. Moreover, it
is easy to see that $G'$ is nontrivial. Therefore, if $G'/M$ is a
chief factor of $G$, then $G'/M\cong S\times\cdot\cdot\cdot\times
S$, a direct product of $k$ copies of a nonabelian simple group $S$.
As $G'/M$ is a minimal normal subgroup of $G/M$ and
$\cd(G/M)\subseteq \cd(G)=\cd(\PSL_4(q))$, it follows by
Proposition~\ref{proposition} below that $k=1$ and $S\cong
\PSL_4(q)$. Equivalently, $G'/M\cong \PSL_4(q)$, as we wanted.

\begin{proposition}\label{proposition} Let $K$ be a group. Suppose that
$\cd(K)\subseteq\cd({\rm{PSL}}_4(q))$ and
$N=S\times\cdot\cdot\cdot\times S$, a direct product of $k$ copies
of a nonabelian simple group $S$, is a minimal normal subgroup of
$K$. Then $k=1$ and $S\cong {\rm{PSL}}_4(q)$.
\end{proposition}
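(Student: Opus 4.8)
The plan is to proceed in two stages: first show $S$ must be isomorphic to $\PSL_4(q)$ (equivalently, rule out every other possibility for $S$), and then show $k=1$. For the second stage, once we know $S\cong\PSL_4(q)$, Lemma~\ref{lemma2} applies: the Steinberg character of $\PSL_4(q)$ has degree $q^6$ and extends to $\Aut(\PSL_4(q))$, so $q^{6k}\in\cd(K)\subseteq\cd(\PSL_4(q))$; since $q^6$ is the largest $p$-power degree and no proper power of $q^6$ lies in $\cd(\PSL_4(q))$ (inspect Table~\ref{character}), we must have $k=1$. So the real work is the first stage.

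For the first stage, I would argue that $S$ is a simple group whose character degrees, after taking $k$-th powers where they extend to $\Aut(S)$, all divide degrees in $\cd(\PSL_4(q))$; in particular (taking any character that extends, or using Lemma~\ref{lemmaIsaacs} bounds) every degree of $S$ must be comparable in size to the degrees in Table~\ref{character}, which forces $|S|$ to be bounded in terms of $q$. The cleanest route: pick a primitive prime divisor $\ell_1$ of $\Phi_3=q^2+q+1$ and $\ell_2$ of $\Phi_4=q^2+1$ (Zsigmondy). By Lemma~\ref{ppd}, the only degree of $\PSL_4(q)$ coprime to $\ell_1\ell_2$ is $q^6$. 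Any nonlinear $\chi\in\Irr(S)$ gives rise (via Lemma~\ref{lemma2} when $\chi$ extends to $\Aut(S)$, which holds for a plentiful supply of characters of simple groups, e.g. by Schur multiplier/outer automorphism considerations — or else via the cruder divisibility bound of Lemma~\ref{lemmaIsaacs}) to constraints forcing most degrees of $S$ to be divisible by $\ell_1$ or $\ell_2$, hence $\ell_1\ell_2\mid|S|$; combined with the fact that degrees of $S$ are bounded above by the largest degree in Table~\ref{character}, which is $O(q^6)$, this limits $S$ to a short explicit list.

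I expect the main obstacle to be the case analysis eliminating the remaining candidate simple groups $S$: sporadic groups and alternating groups are killed quickly by order/degree bounds (their degrees cannot reproduce the $q$-dependent pattern of Table~\ref{character}, and $\ell_1,\ell_2$ grow with $q$), but the groups of Lie type in the same characteristic $p$, or in a different characteristic but with comparable order, require care. The decisive tool there is again Lemma~\ref{ppd} together with the structure of Table~\ref{character}: the degree set of $\PSL_4(q)$ contains $q^6$ as its unique $p$-power degree and contains degrees like $q\Phi_3$ and $q^2\Phi_4$ that are coprime, so $S$ must have a $p$-power character degree equal to $q^6$ (forcing, by the classification of prime-power-degree representations of quasi-simple groups in \cite{Malle-Zalesskii}, that $S$ is a group of Lie type in characteristic $p$ with Steinberg degree $q^6$), and must simultaneously possess coprime nonlinear degrees matching $\{q\Phi_3,q^2\Phi_4\}$; matching the full list $\Phi_1,\Phi_2,\Phi_3,\Phi_4$ against the generic degrees of such groups pins down $S\cong\PSL_4(q)$. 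The bookkeeping — confirming no other rank-$\le 3$ Lie type group of order a power of $p$ with Steinberg degree $q^6$ has a compatible degree set, and handling the small exceptional isomorphisms — is the tedious but routine heart of the proof.
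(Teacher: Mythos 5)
Your second stage is fine: once $S\cong\PSL_4(q)$ is known, the Steinberg character extends to $\Aut(S)$, so Lemma~\ref{lemma2} puts $q^{6k}$ into $\cd(\PSL_4(q))$, and since $q^6$ is the unique prime-power degree there, $k=1$. Your toolkit for the first stage is also the right one in outline (Lemma~\ref{lemma2}, the Steinberg character plus \cite{Malle-Zalesskii}, Lemma~\ref{ppd}, and the fact that every degree of $N$ divides a degree of $\PSL_4(q)$). But there is a genuine logical gap at the pivot of your argument: you assert that because $\cd(\PSL_4(q))$ contains $q^6$ as its unique $p$-power degree and contains the coprime degrees $q\Phi_3$ and $q^2\Phi_4$, the group $S$ ``must have'' a $p$-power degree equal to $q^6$ and ``must simultaneously possess'' coprime degrees matching $q\Phi_3$ and $q^2\Phi_4$. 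The hypothesis is only the inclusion $\cd(K)\subseteq\cd(\PSL_4(q))$ (the proposition is applied to $K=G/M$, where equality is not available), and an inclusion of this direction yields no lower bounds whatsoever on $\cd(K)$ or $\cd(S)$; nothing forces $S$ to realize any particular degree of $\PSL_4(q)$. The correct version of your implication runs the other way and only for Lie type: \emph{if} $S$ is of Lie type in characteristic $r$, \emph{then} its Steinberg character extends, so $|S|_r^k$ is a prime-power member of $\cd(\PSL_4(q))$, forcing $|S|_r^k=q^6$ and $r=p$. For alternating and sporadic $S$ no such character is available and different arguments are required.

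Those different arguments are exactly what you defer as ``routine bookkeeping,'' and they are the bulk of the proof, with specific non-obvious inputs. For $S=A_n$ ($n\ge 7$) the paper uses that $n(n-3)/2$ and $(n-1)(n-2)/2$ are \emph{consecutive integers} both forced into $\cd(\PSL_4(q))$ (after disposing of $k\ge2$ via the fact that $q^6$ is the only nontrivial power among the relevant degrees), and Lemma~\ref{helpful lemma}(ii) leaves only $(q\Phi_3,\Phi_2\Phi_4)$, which is then killed by the degree $n-1$ being below the minimal degree $q\Phi_3$. For sporadic $S$, the key count is that $S$ has at least six extendable characters of distinct degrees while $\PSL_4(q)$ has at most five nontrivial power degrees (Lemma~\ref{helpful lemma2}(ii)), which forces $k=1$, after which the bound $q\Phi_3\ge 2379$ finishes. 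For Lie-type $S$, after $|S|_r^k=q^6$, one needs the $p$-parts of specific unipotent character degrees (Table~\ref{unipotent-character}) to bound the rank and then explicit unipotent degrees to eliminate each survivor; your $\ell_1\ell_2$ heuristic also overreaches, since Lemma~\ref{ppd} only guarantees that a degree $\ne q^6$ is divisible by $\ell_1$ \emph{or} $\ell_2$, not both. As written, the proposal would not assemble into a proof without supplying all of this.
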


The rest of this section is devoted to the proof of this
proposition. Indeed, the proof is the combination of
Lemmas~\ref{eliminate alternating}, \ref{eliminate sporadic},
\ref{eliminate classical groups}, and~\ref{eliminate exceptional
groups}. First, we need the following.

\begin{lemma}\label{sporadic groups} If $S$ is a sporadic group or
the Tits group, then there are at least six nontrivial irreducible
characters of $S$ of distinct degrees which are extendable to
$\Aut(S)$.
\end{lemma}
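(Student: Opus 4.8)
The plan is to exploit the fact that for every sporadic simple group $S$, as well as for the Tits group, the outer automorphism group $\Out(S)$ is cyclic of order $1$ or $2$. Since a nonabelian simple group is centerless, $S\cong\Inn(S)\unlhd\Aut(S)$ with cyclic quotient $\Aut(S)/\Inn(S)\cong\Out(S)$. The extendability tool is then the standard result (\cite[Corollary~11.22]{Isaacs}) that if $N\unlhd G$ with $G/N$ cyclic, every $G$-invariant $\chi\in\Irr(N)$ extends to $G$. Hence, to prove that $\chi\in\Irr(S)$ extends to $\Aut(S)$, it suffices to prove that $\chi$ is $\Aut(S)$-invariant.

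The next step reduces invariance to a purely numerical condition. Because $\Aut(S)$ permutes $\Irr(S)$ while preserving character degrees, any $\chi\in\Irr(S)$ that is the \emph{unique} irreducible character of its degree is necessarily fixed by $\Aut(S)$, hence $\Aut(S)$-invariant and, by the previous paragraph, extendable. So it is enough to exhibit, for each $S$, six distinct nontrivial degrees, each attained by exactly one irreducible character: the corresponding six characters then automatically have pairwise distinct degrees and all extend to $\Aut(S)$. This is the content I would verify from the character tables in \cite{Atl1}.

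The verification splits according to $|\Out(S)|$. When $\Out(S)=1$ one has $\Aut(S)=S$, every character extends trivially, and only six distinct nontrivial degrees are required; this holds for all such groups, the tightest being $M_{11}$, whose degree set already contains the six nontrivial values $10,11,16,44,45,55$. When $|\Out(S)|=2$ I would instead read off six degrees each carried by a single irreducible character. For the larger groups ($\mathrm{HN}$, $\mathrm{Fi}_{22}$, $\mathrm{Fi}_{24}'$, $\mathrm{ON}$, $\mathrm{Suz}$, and the like) there are many irreducible characters and such singleton degrees abound; for the smallest groups a direct inspection suffices, for instance $M_{12}$ has the six singleton nontrivial degrees $45,54,66,99,120,144,176$ and $M_{22}$ has $21,55,99,154,210,231,385$, while $J_2$, $J_3$, $\mathrm{HS}$, $\mathrm{McL}$, $\mathrm{He}$ and the Tits group are handled the same way.

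The main obstacle is purely bookkeeping and is concentrated in the small groups with $|\Out(S)|=2$. There, equal character degrees can arise for three unrelated reasons --- a genuine orbit of the outer automorphism, a complex-conjugate pair, or a pair of Galois (algebraically conjugate) characters --- and only the first of these obstructs $\Aut(S)$-invariance. Restricting attention to singleton degrees circumvents this distinction entirely, so the sole remaining task is to confirm, group by group from \cite{Atl1}, that at least six singleton nontrivial degrees are present; this is routine and holds in every case, which completes the argument.
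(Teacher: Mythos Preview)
Your proposal is correct and takes essentially the same approach as the paper: both reduce the claim to an inspection of the Atlas~\cite{Atl1}. The paper's proof is literally the single sentence ``This can be checked by using Atlas,'' whereas you spell out a concrete verification strategy (cyclicity of $\Out(S)$, extendability via \cite[Corollary~11.22]{Isaacs}, and the singleton-degree shortcut), so your write-up is strictly more informative but not a different argument.
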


\begin{proof} This can be checked by using Atlas~\cite{Atl1}.
\end{proof}

\begin{lemma}\label{eliminate alternating} With the hypothesis of Proposition~\ref{proposition},
$S$ is not an alternating group of degree larger than $6$.
\end{lemma}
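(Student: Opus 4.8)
The goal is to show that if $S = \Alt(n)$ with $n \geq 7$ appears as the base of a minimal normal subgroup $N = S^k$ of a group $K$ with $\cd(K) \subseteq \cd(\PSL_4(q))$, we reach a contradiction. The main tool is Lemma~\ref{lemma2}: every irreducible character of $S$ that extends to $\Aut(S)$ gives rise to a character degree $\chi(1)^k$ of $K$. So the strategy is to exhibit enough characters of $\Alt(n)$ with controlled, large degrees that are known to extend to $\Sym(n)$ (hence to $\Aut(S)$, since $\Out(\Alt(n)) = \ZZ/2$ for $n \neq 6$), and then observe that the resulting degrees $\chi(1)^k$ cannot all fit inside the short list in Table~\ref{character}.

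\textbf{Key steps.} First I would recall the standard fact that the irreducible characters of $\Sym(n)$ restrict irreducibly to $\Alt(n)$ except for those labelled by self-conjugate partitions, and that consequently a large supply of $\Alt(n)$-characters extend to $\Sym(n)$; in particular the two nontrivial characters of smallest degree, namely $n-1$ and $n(n-3)/2$ (the degrees of the partitions $(n-1,1)$ and $(n-2,2)$ for $n \geq 7$), both extend. Next, using Lemma~\ref{lemma2}, both $(n-1)^k$ and $(n(n-3)/2)^k$ lie in $\cd(\PSL_4(q))$. For $n$ small (say $7 \le n \le$ some explicit bound) I would argue directly: the smallest nontrivial degree in Table~\ref{character} is $q\Phi_3 = q(q^2+q+1)$, which for $q \ge 13$ exceeds, e.g., $2196$, so $(n-1)^k \ge q\Phi_3$ forces either $k$ large or $n$ large; then one checks that the degree set cannot contain two coprime-ish values of the form $(n-1)^k$ and $(n(n-3)/2)^k$ simultaneously, since by Lemma~\ref{helpful lemma2}(ii), $\PSL_4(q)$ has at most five nontrivial power degrees and these are heavily constrained (all divisible by $\Phi_2 = q+1$ or equal to $q^6$). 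For $n$ large, the degrees $(n-1)^k$ and $(n(n-3)/2)^k$ are nontrivial powers when $k \ge 2$, and for $k=1$ one uses that $\Alt(n)$ has more than $30$ irreducible character degrees once $n$ is large enough (the number of partitions of $n$ grows), far exceeding the $30$ entries of Table~\ref{character} — more carefully, one finds a third extendable degree, e.g. from $(n-2,1,1)$ or $(n-3,3)$, giving three pairwise "independent" constraints that Table~\ref{character} cannot meet.

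\textbf{Main obstacle.} The delicate part is the case $k = 1$, i.e. $S = \Alt(n) \hookrightarrow K$ as a simple minimal normal subgroup: here Lemma~\ref{lemma2} only tells us that each extendable degree of $\Alt(n)$ is literally a degree of $\PSL_4(q)$, so I need $\Alt(n)$ to have strictly more extendable character degrees than can be accommodated, or to have a specific extendable degree (or ratio of degrees) incompatible with the arithmetic of the $\Phi_i$. The cleanest route is probably: for $7 \le n \le 8$ handle $\Alt(7), \Alt(8)$ by hand from the Atlas (noting $\Alt(8) \cong \PSL_4(2)$, already excluded, and $q \ge 13$ anyway makes the orders incompatible); for $n \ge 9$, use that $\Alt(n)$ has at least six nontrivial extendable degrees $d_1 < d_2 < \cdots < d_6$ (an analogue of Lemma~\ref{sporadic groups}), all of which must appear in Table~\ref{character}, and then derive a contradiction from the fact that the six smallest nontrivial degrees of $\PSL_4(q)$ for $q \ge 13$ have prescribed factorizations into $q, \Phi_1, \Phi_2, \Phi_3, \Phi_4$, whereas consecutive extendable degrees of $\Alt(n)$ such as $n-1$ and $n(n-3)/2$ are too close together (their ratio is $\sim n/2$) to match the multiplicative gaps forced by the cyclotomic factors. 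I expect the bookkeeping of "which degree of $\Alt(n)$ equals which row of the table" to be the only real work; everything else is the routine application of Lemma~\ref{lemma2} and the arithmetic lemmas already proved.
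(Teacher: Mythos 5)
There is a genuine gap, and it comes from choosing the wrong pair of characters. You work with the degrees $n-1$ and $n(n-3)/2$ (partitions $(n-1,1)$ and $(n-2,2)$), and at one point you want to treat them as ``coprime-ish'' so as to invoke the arithmetic constraints of Lemma~\ref{helpful lemma}. But these two degrees are not coprime in general (for $n=7$ they are $6$ and $14$; for $n=11$ they are $10$ and $44$), so Lemma~\ref{helpful lemma}(i) does not apply to this pair, and your claim that ``the degree set cannot contain two coprime-ish values of the form $(n-1)^k$ and $(n(n-3)/2)^k$'' has no proof behind it. The decisive observation you are missing is that $n(n-3)/2$ and $(n-1)(n-2)/2$ (partitions $(2^2,1^{n-4})$ and $(3,1^{n-3})$) are \emph{consecutive integers}, hence genuinely coprime. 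With that pair, Lemma~\ref{helpful lemma}(i) forces one of $(n(n-3)/2)^k,\ ((n-1)(n-2)/2)^k$ into a short list in which $q^6$ is the only nontrivial power, killing $k\ge 2$ immediately (with $n=9$ handled separately via the prime power degree $8^k$ and \cite{Malle-Zalesskii}); and for $k=1$, Lemma~\ref{helpful lemma}(ii) says the only consecutive pair of degrees of $\PSL_4(q)$ is $(q\Phi_3,\Phi_2\Phi_4)$, so $n(n-3)/2=q\Phi_3$ is the \emph{smallest} nontrivial degree, which contradicts the fact that $n-1<n(n-3)/2$ is also a degree. Your own handling of $k=1$ --- finding six extendable degrees, or arguing that the ratio $\sim n/2$ between consecutive extendable degrees is incompatible with the ``multiplicative gaps forced by the cyclotomic factors'' --- is left entirely as a plan, and neither route is straightforward to complete; the consecutive-integer trick replaces all of that bookkeeping with two lines. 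Note also that your third character $n-1$ is still needed, but only at the very end, to contradict minimality of $q\Phi_3$.

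Beyond that central issue, two smaller points: the case $k\ge2$ cannot be dispatched merely by counting that $\PSL_4(q)$ has at most five nontrivial power degrees, since two values can comfortably sit among five slots --- you need the coprimality to funnel one of them into the sublist where $q^6$ is the only power; and your suggestion to dispose of small $n$ ``by hand from the Atlas'' is unnecessary once the consecutive pair is in place, since the argument is uniform in $n\ge7$ apart from the single exceptional value $n=9$.
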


\begin{proof} Assume that $S=A_n$ with $n\geq 7$. It is well-known that the irreducible characters of $S_n$
are in one-to-one correspondence with partitions of size $n$. In
particular, $S_n$ has three degrees $\chi_1(1)=n(n-3)/2,$
$\chi_2(1)=(n-1)(n-2)/2$ and $\chi_3(1)=n-1$ corresponding to
partitions $(2^2,1^{n-4}),$ $(3,1^{n-3})$ and $(n-1,1),$
respectively. As $n\geq7$, these partitions are not self-conjugate
and hence $\chi_i,i=1,2,3$  are still irreducible when restricting
to $A_n$. Using Lemma~\ref{lemma2}, we deduce that
$\chi_i(1)^k\in\cd(K)$ and hence
$\chi_i(1)^k\in\cd(\textrm{PSL}_4(q))$ for $i=1,2,3$. Since
$(\chi_1(1)^k,\chi_2(1)^k)=1$, Lemma~\ref{helpful lemma}(i) implies
that the set $\{\chi_1(1)^k,\chi_2(1)^k\}$ contains at least one of
the degrees
$$q\Phi_3, \Phi_1^2\Phi_3, \frac{1}{2}\Phi_1^2\Phi_3,
q^2\Phi_4, q^3\Phi_3, q^2\Phi_1^2\Phi_3,
\frac{1}{2}q^2\Phi_1^2\Phi_3, q^6.$$ If $k\geq2$, since $q^6$ is the
only nontrivial power in this list by Lemma~\ref{helpful lemma2}(i),
we would have
$q^6\in\{\chi_1(1)^k,\chi_2(1)^k\}=\{(n(n-3)/2)^k,((n-1)(n-2)/2)^k\}$.
This is impossible when $n\geq7$ and $n\neq 9$. Now assume that
$n=9.$ In this case, we  have that $\chi_3(1)^k=8^k$ is a nontrivial
prime power degree of $\textrm{PSL}_4(q).$ However this is
impossible by applying~\cite{Malle-Zalesskii}.

We have shown that the only choice for $k$ is $1$. That means the
consecutive integers $n(n-3)/2$ and $(n-1)(n-2)/2$ are both degrees
of $\PSL_4(q)$. Lemma~\ref{helpful lemma}(ii) now yields
$$n(n-3)/2=q\Phi_3 \text { and } q=4 \text{ or } q\geq6$$ or $$n(n-3)/2=20 \text{ and }
q=2.$$ The latter case can be eliminated easily. So we assume that
$n(n-3)/2=q\Phi_3$ and $q\geq4$. Since
$\chi_3(1)=n-1\in\cd(\textrm{PSL}_4(q))$ and $n-1<n(n-3)/2$ and
$n(n-3/2)=q\Phi_3$ is the smallest nontrivial degree of
$\textrm{PSL}_4(q)$, we obtain a contradiction.
\end{proof}

\begin{lemma}\label{eliminate sporadic} With the hypothesis of Proposition~\ref{proposition},
$S$ is not a sporadic simple group nor the Tits group.
\end{lemma}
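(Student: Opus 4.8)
The plan is to argue exactly as in the proof of Lemma~\ref{eliminate alternating}, using Lemmas~\ref{lemma2} and~\ref{sporadic groups} to push many character degrees of $S$ into $\cd(\PSL_4(q))$ and then deriving a contradiction from the structure of Table~\ref{character}. By Lemma~\ref{sporadic groups}, if $S$ is a sporadic simple group or the Tits group, there are at least six nontrivial irreducible characters $\chi_1,\dots,\chi_6\in\Irr(S)$ of pairwise distinct degrees, each extendable to $\Aut(S)$. By Lemma~\ref{lemma2}, $\chi_i(1)^k\in\cd(K)\subseteq\cd(\PSL_4(q))$ for $i=1,\dots,6$, so $S$ contributes at least six distinct nontrivial $k$th-power degrees to $\cd(\PSL_4(q))$.

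First I would dispose of the case $k\geq 2$. By Lemma~\ref{helpful lemma2}(ii), $\PSL_4(q)$ has at most five nontrivial power degrees, and in particular at most five degrees that are $k$th powers with $k\geq 2$. But we have just produced six distinct such degrees, a contradiction. Hence $k=1$, i.e.\ $N=S$ itself, and $\chi_1(1),\dots,\chi_6(1)$ are six distinct nontrivial degrees of $\PSL_4(q)$.

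Next I would exploit the list in Table~\ref{character} together with the known character tables of the finitely many sporadic groups and the Tits group (via the Atlas~\cite{Atl1}). The key point is that $|S|$ divides $|K|$, hence $\pi(S)\subseteq\pi(\PSL_4(q))=\pi(q)\cup\pi(\Phi_1\Phi_2\Phi_3\Phi_4)$, a rather restricted set of primes once $q$ is fixed; this already eliminates most sporadic groups for most $q$. More robustly, one compares the six smallest nontrivial degrees appearing in the Atlas for each sporadic group with the degree list of $\PSL_4(q)$: the smallest nontrivial degree of $\PSL_4(q)$ is $q\Phi_3=q^3+q^2+q$, and any degree of $S$ dividing some entry of Table~\ref{character} is severely constrained by the arithmetic of the cyclotomic polynomials $\Phi_1,\Phi_2,\Phi_3,\Phi_4$. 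Running this check group-by-group — there are only 27 cases — rules out every sporadic group and the Tits group. I would present this as a finite verification, citing the Atlas, rather than writing out all 27 comparisons.

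The main obstacle is organizing the finite check cleanly rather than any conceptual difficulty: one must be careful that for small $q$ (recall $q\geq 13$) some accidental numerical coincidence between an Atlas degree and a table entry does not occur, and that the "possible'' degrees with factors $1/2,1/4$ in Table~\ref{character} are handled. I would streamline this by first using the divisibility/consecutive-integer constraints of Lemmas~\ref{helpful lemma} and~\ref{helpful lemma2} to cut down the candidate degree sets, then observing that the six extendable degrees of any sporadic $S$ cannot all fit; the prime-set argument $\pi(S)\subseteq\pi(\PSL_4(q))$ handles the remaining large groups efficiently since sporadic groups have order divisible by primes (e.g.\ large primes like $37,41,43,\dots$ for the Monster and its kin) that cannot divide $q(q-1)(q+1)(q^2+q+1)(q^2+1)$ for the relevant range without forcing $q$ to be astronomically large, contradicting the boundedness of $|S|$.
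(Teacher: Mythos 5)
Your reduction to $k=1$ is exactly the paper's: the six extendable characters of distinct degrees from Lemma~\ref{sporadic groups} yield, via Lemma~\ref{lemma2}, six distinct nontrivial $k$th powers in $\cd(\PSL_4(q))$, contradicting the bound of five nontrivial power degrees in Lemma~\ref{helpful lemma2}(ii). For $k=1$ both you and the paper fall back on a finite Atlas check, but the paper organizes it more economically than your group-by-group sweep. For every sporadic group and the Tits group other than $Ly$, $Th$, $Fi'_{24}$, $B$, $M$ and $O'N$, it simply exhibits a nontrivial extendable character of degree below $2379=13(13^2+13+1)\leq q\Phi_3$, the smallest nontrivial degree of $\PSL_4(q)$; for the five largest groups it counts at least $33$ distinct nontrivial extendable degrees against the at most $32$ nontrivial degrees in Table~\ref{character}, so no prime-divisor analysis is needed there; only $O'N$ uses the combination you propose (its extendable degree $10{,}944$ forces $13\leq q\leq 19$, and then $19,31\in\pi(S)\subseteq\pi(\PSL_4(q))$ eliminates those $q$). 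Two cautions about your version. First, the condition $\pi(S)\subseteq\pi(\PSL_4(q))$ by itself only forces each prime $r\in\pi(S)\setminus\{p\}$ to satisfy $r\mid q^i-1$ for some $i\leq 4$, i.e.\ a congruence condition on $q$ modulo $r$; it does not rule out the large sporadic groups until you have first capped $q$ using the smallest-degree bound, so the two ingredients must be applied in that order (the paper's $33>32$ count sidesteps this entirely). Second, only the degrees of characters extendable to $\Aut(S)$ are guaranteed to \emph{equal} entries of Table~\ref{character}; arbitrary degrees of $S$ merely divide some entry, so your phrase about degrees ``dividing some entry'' should be kept separate from the comparison with the minimal degree $q\Phi_3$.
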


\begin{proof} Assume that $S$ is a simple sporadic
group or the Tits group. By Lemma~\ref{sporadic groups}, $S$ has at
least six nontrivial irreducible characters of distinct degrees
which are extendable to $\Aut(S)$. The $k$th powers of these degrees
will be degrees of $K$ by Lemma~\ref{lemma2}. As
$\cd(K)\subseteq\cd(\PSL_4(q))$, it follows that $\cd(\PSL_4(q))$
contains at least six nontrivial powers if $k\geq2$, which is
impossible by Lemma~\ref{helpful lemma2}(ii).

We have shown that if $S$ is a simple sporadic group or Tits group
then $k=1$.

If $S$ is one of $Ly$, $Th$,
$Fi'_{24}$, $B$, or $M$, then by Atlas again, we see that $S$ has at
least $33$ nontrivial irreducible characters of distinct degrees
which extend to $\Aut(S)$. This is a contradiction since $\PSL_4(q)$
has at most $32$ nontrivial degrees.

Assume next that $S$ is $O'N$.  By \cite{Atl1}, $O'N$ has an
irreducible character degree $10,$$944$ which is extendable to
$\Aut(S)$. Therefore, as $q\Phi_3$ is the smallest nontrivial degree
of $\cd(\PSL_4(q))$, we have $q\Phi_3\leq 10,$$944$. It follows that
$13\leq q\leq 19$  and these cases can be ruled out directly by
using the fact that $\{19,31\}\subseteq \pi(S)\subseteq
\pi(\PSL_4(q)).$

For the remaining cases, by inspecting Atlas \cite{Atl1}, we see
that $S$ has a nontrivial irreducible character of degree smaller
than $2379$ which is extendable to $\Aut(S)$. This degree is a
degree of $\PSL_4(q)$ by Lemma~\ref{lemma2}, which is a
contradiction as the smallest nontrivial degree of  $\PSL_4(q)$ is
$q\Phi_3$, which is at least $13(13^2+13+1)=2379$.
\end{proof}

\begin{lemma}\label{eliminate classical groups} With the hypothesis of Proposition~\ref{proposition},
$S$ is not a simple Lie type group of classical type except
$\PSL_4(q)$. Furthermore, if $S\cong \PSL_4(q)$ then $k=1$.
\end{lemma}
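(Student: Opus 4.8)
The plan is to run through the list of simple classical groups $S$ of Lie type and eliminate each possibility by producing too many, or arithmetically incompatible, character degrees in $\cd(K)\subseteq\cd(\PSL_4(q))$. The main tool is Lemma~\ref{lemma2}: for any $\chi\in\Irr(S)$ that extends to $\Aut(S)$, the $k$th power $\chi(1)^k$ is a degree of $K$, hence of $\PSL_4(q)$. Since $\PSL_4(q)$ has at most $32$ nontrivial degrees (read off Table~\ref{character}) and, by Lemma~\ref{helpful lemma2}(ii), at most five of them are nontrivial powers, the strategy splits naturally into two regimes. First, I would dispose of the case $k\geq 2$: the Steinberg character of any finite group of Lie type extends to its automorphism group, and more relevant here is that classical groups have many irreducible characters extending to $\Aut(S)$ (unipotent characters always do, by Lusztig's results, and more can be extracted from generic degree formulas); if $S$ has at least six nontrivial extendable characters of distinct degrees, then $\cd(\PSL_4(q))$ would contain at least six nontrivial powers, contradicting Lemma~\ref{helpful lemma2}(ii). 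One checks this holds for every classical $S$ once $S$ is not too small, and the finitely many small exceptions are handled directly (their smallest nontrivial extendable degree is below $q\Phi_3\geq 2379$, or a prime-divisor count via $\pi(S)\subseteq\pi(\PSL_4(q))$ fails).

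With $k=1$ established, the remaining work is to show that $\cd(K)\subseteq\cd(\PSL_4(q))$ with a classical simple $S=\soc(K/{\text{something}})$ forces $S\cong\PSL_4(q)$, and then that even $S\cong\PSL_4(q')$ for a different $q'$ (or the wrong type) is impossible. Here I would use the coprimeness and maximality structure recorded in Lemmas~\ref{ppd}, \ref{maximal degree among degrees}, and \ref{helpful lemma}(i): $\PSL_4(q)$ has exactly two degrees maximal under divisibility, $\Phi_1^3\Phi_2\Phi_3$ and $\Phi_1^2\Phi_2^2\Phi_4$, and the primitive prime divisors $\ell_1\mid\Phi_3$, $\ell_2\mid\Phi_4$ govern which degrees can be coprime. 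A classical group $S$ of large rank has irreducible extendable characters whose degrees are divisible by primitive prime divisors corresponding to many cyclotomic factors $\Phi_d(q_0)$ in $|S|$, and these cannot all be squeezed into the short list in Table~\ref{character}; comparing the set of primes dividing degrees, and the largest power of the defining characteristic appearing in a degree (which is $q^6$, the Steinberg degree, for $\PSL_4(q)$), pins down both the defining characteristic and the rank. The Steinberg character is the key rigid invariant: $q^6\in\cd(\PSL_4(q))$ is its unique prime-power degree, so $\St_S(1)$ must be a degree of $\PSL_4(q)$ that is a $p'$-free power of the defining prime of $S$ — this already forces the defining characteristic of $S$ to be $p$ and $|S|_p = q^6$, leaving only $\PSL_4(q)$, $\PSU_4(q)$, $\mathrm{PSp}_6(q)$-type candidates with the same $p$-part, which are then separated by their remaining degrees.

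Finally, to rule out $S\cong\PSL_4(q)$ with $k\geq 2$ — the last clause of the lemma — I would invoke Lemma~\ref{lemma2} with the Steinberg character of $\PSL_4(q)$, which extends to $\Aut(\PSL_4(q))$: this gives $q^{6k}\in\cd(\PSL_4(q))$, but $q^6$ is the largest power of $q$ that is a degree, so $k=1$. Alternatively, any of the several extendable characters of $\PSL_4(q)$ of distinct degrees would give two or more nontrivial powers whose bases are the degrees of $\PSL_4(q)$ themselves, which one checks directly is incompatible with Table~\ref{character} when $k\geq 2$.

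The part I expect to be the main obstacle is the middle step: systematically verifying, for \emph{every} family of classical groups and in particular for the small-rank groups of large field size that are not immediately killed by counting, that one genuinely produces a degree outside $\cd(\PSL_4(q))$ or a prime outside $\pi(\PSL_4(q))$. This requires careful bookkeeping with generic degrees and primitive prime divisors (Zsigmondy), since the coincidences one must exclude — e.g.\ a symplectic or unitary group in a different characteristic accidentally matching several degrees — are exactly where the argument is delicate; the clean invariants (Steinberg degree, the two divisibility-maximal degrees, the $\ell_1\ell_2$-coprime degree being $q^6$) are what make it tractable, but checking each case against Table~\ref{character} is where the real labor lies.
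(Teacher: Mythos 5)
Your skeleton for the $k=1$ analysis is essentially the paper's: use the Steinberg character (extendable to $\Aut(S)$ by Feit's result) together with the Malle--Zalesskii classification of prime-power degrees to force $|S|_r^k=q^6$ and $r=p$, then compare the $p$-parts of other unipotent character degrees against Table~\ref{character} to bound the rank and eliminate each classical family. You correctly identify that this middle step is where the labor lies, and the paper indeed does it by cases $\PSL_n^{\pm}$, $\mathrm{PSp}_{2n}/\mathrm{P\Omega}_{2n+1}$, $\mathrm{P\Omega}_{2n}^{\pm}$, using the second-largest unipotent $p$-part to get $n\le 4$ and then specific unipotent degrees from Cla{\ss}en-Houben's tables to kill the survivors.

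The genuine gap is in your treatment of $k\ge 2$. You propose to transplant the counting argument from the sporadic case: six extendable characters of distinct nontrivial degrees would give six nontrivial powers in $\cd(\PSL_4(q))$, contradicting Lemma~\ref{helpful lemma2}(ii). But this hypothesis simply fails for an infinite family of classical groups: $\PSL_2(q_1)$ has at most four or five distinct nontrivial degrees in total ($q_1$, $q_1\pm 1$, $(q_1\pm 1)/2$), so it cannot possibly have six extendable characters of distinct degrees, and these groups are not ``finitely many small exceptions'' that can be checked by hand. Your fallback for the last clause ($q^{6k}\in\cd(\PSL_4(q))$ forces $k=1$) only works when $S\cong\PSL_4(q)$ with the \emph{same} $q$; it does not rule out, say, $S\cong\PSL_4(q^{1/2})$ or $\PSL_2(q^3)$ with $k=2$, both of which satisfy $|S|_p^k=q^6$. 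The paper closes this gap with two steps you are missing: first, for a nontrivial $\tau\in\Irr(S)$ distinct from $\St$, the character $\tau\times\St\times\cdots\times\St\in\Irr(N)$ shows $\tau(1)|S|_p^{k-1}$ divides some degree of $\PSL_4(q)$, whence $|S|_p^{k-1}\le q^3$ and so $k\le 2$; second, for $k=2$ the character $\varphi=1\times\St$ has inertia group of index $2$ in $K$ modulo $C_K(N)$, so $2|S|_p\in\cd(\PSL_4(q))$ is coprime to $\ell_1\ell_2$ and Lemma~\ref{ppd} forces $2|S|_p=q^6=|S|_p^2$, i.e.\ $|S|_p=2$, a contradiction. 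Without some substitute for these two steps, your proof does not establish $k=1$ for all classical $S$.
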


\begin{proof} Suppose that $S$ is a simple group of Lie
type $G(q_1)$ where $q_1=r^b$, a prime power. Let $\St$ denote the
Steinberg character of $S$. It is well-known (see \cite{Feit} for
instance) that $\St$ is extendable to $\Aut(S)$ and $\St(1)=|S|_r$,
the $r$-part of $|S|$. By Lemma~\ref{lemma2}, we get
$|S|_r^k\in\cd(\PSL_4(q))$. From the classification of irreducible
representations of quasi-simple groups of prime power degrees in
\cite[Theorem 1.1]{Malle-Zalesskii}, we have that $q^6$ is the only
degree of $\PSL_4(q)$ that is a prime power and therefore
$|S|_r^k=q^6$ and $r=p$.

Consider a nontrivial character $\tau\in\Irr(S)$ different from
$\St$. Then $\tau\times \St\times\cdot\cdot\cdot\times
\St\in\Irr(N)$ and hence $\tau(1)\St(1)^{k-1}=\tau(1)|S|_r^{k-1}$
divides some degree of $K$. Inspecting the list of degrees of
$\PSL_4(q)$, we then deduce that $|S|_r^{k-1}\leq q^3$. This
together with the fact $|S|_r^k=q^6$ imply that $k\leq2$.

We wish to show that $k=1.$ By way of contradiction, assume that
$k=2.$ Hence $|S|_p^2=q^6.$ Let $C=C_K(N).$ Then $K/C$ embeds into
$\Aut(N)\cong \Aut(S)\wr \ZZ_2.$ Let $B=\Aut(S)^2\cap K.$ Then
$K/B\cong \ZZ_2.$ Let $\varphi=1\times St\in\Irr(N).$ We have that
$\varphi$ extends to $B$ and so $B$ is the inertia group of
$\varphi$ in $K.$ It follows that $\varphi^K(1)=2\varphi(1)=2|S|_p$
is a degree of $K$ and then $2|S|_p\in\cd(H).$ As this degree is
prime to $\ell_1\ell_2,$ Lemma \ref{ppd} implies that
$2|S|_p=|S|_p^2.$ It follows that $|S|_p=2,$ which is impossible.

We consider the following cases. In all cases except $\PSL_4(q)$,
we will reach a contradiction by finding a character
$\chi\in\Irr(S)$ extendable to $\Aut(S)$ so that
$\chi(1)\notin\cd(\PSL_4(q))$ or a character $\psi\in\Irr(S)$ so
that $\psi(1)$ divides no degree of $\PSL_4(q)$.

\begin{table}[h]\caption{Some unipotent characters of simple groups of Lie type \cite{Carter}.}\label{unipotent-character}
\begin{tabular}{ll}
$S=G(q)$ & $p$-part of degrees\\\hline
$\PSL^\pm_n(q)$ & $q^{(n-1)(n-2)/2}$\\
 $\mathrm{PSp}_{2n}(q)$ or $\mathrm{P\Omega}_{2n+1}(q)$, $p=2$ & $q^{(n-1)^2}/2$ \\
 $\mathrm{PSp}_{2n}(q)$ or $\mathrm{P\Omega}_{2n+1}(q)$, $p>2$ & $q^{(n-1)^2}$ \\
$\mathrm{P\Omega}^+_{2n}(q)$&$q^{n^2-3n+3}$\\
$\mathrm{P\Omega}^-_{2n}(q)$&$q^{n^2-3n+2}$\\
 $\tb D_4(q)$& $q^7$\\
 $\ta F_4(q)$& $q^6\sqrt{q/2}$ \\
  $\ta E_6(q)$& $q^{25}$\\
   $F_4(q)$, $p=2$&$q^{13}/2$\\
   $F_4(q)$, $p>2$&$q^{13}$\\
   $E_6(q)$& $q^{25}$ \\
    $E_7(q)$& $q^{46}$\\
     $E_8(q)$& $q^{91}$\\
             \hline
\end{tabular}
\end{table}

\medskip

(i) $S\cong \PSL_n^\pm(q_1)$ where $q_1=p^b$. Then we get
\begin{equation}\label{1}q^6=|S|_p=p^{bn(n-1)/2}.\end{equation} From Table~\ref{unipotent-character}, $S$ has a unipotent
character $\chi$ different from the Steinberg character with
$\chi(1)_p=p^{b(n-1)(n-2)/2}$. The fact that
$\chi(1)\in\cd(\PSL_4(q))$ and Table~\ref{character} then yield
\begin{equation}\label{2}p^{b(n-1)(n-2)/2}\mid q^3
\end{equation}
Now (\ref{1}) and (\ref{2}) imply $2(n-2)\leq n$ and hence $n\leq4$.

If $n=2$ then $q_1=q^6$. The fact that $\PSL_2(q_1)$ has an
irreducible of degree $q_1+1$ (see \cite{Lubeck}) then leads to a
contradiction since both $q^6+1$   divides no degree of $PSL_4(q)$.
If $n=3$ then $q_1^{3}=q^6$. We see that $q_1=q^2.$ This case does
not happen since $\PSL^\pm_3(q^2)$ has an irreducible character of
degree $(q^2\mp1)^2(q^2\pm1)$ which does not divide any degree of
$\PSL_4(q)$. Finally, if $n=4$ then $q_1^{6}=q^6$ and hence $q_1=q$.
The case $S\cong \rm{PSU}_4(q)$ does not happen since $\PSU_4(q)$
has a unipotent character of degree $q^3\Phi_6\notin\cd(\PSL_4(q))$
(see \cite[Table 20]{Clasen}). We are left with the case  $S\cong
\PSL_4(q)$, as wanted.

\medskip

(ii) $S\cong \mathrm{P\Omega}_{2n+1}(q_1)$ or $\mathrm{PSp}_{2n}(q_1)$
where $q_1=p^b$ and $n\geq2$. Then we get
\begin{equation}\label{3}q^6=|S|_p=p^{bn^2}.\end{equation} From Table~\ref{unipotent-character}, $S$ has a unipotent
character $\chi$ different from the Steinberg character with
$\chi(1)_p=p^{b(n-1)^2-1}$ when $p=2$ or $\chi(1)_p=p^{b(n-1)^2}$
when $p>2$. The fact that $\chi(1)\in\cd(\PSL_4(q))$ and
Table~\ref{character} then yield
\begin{equation}\label{4}p^{(b(n-1)^2-1)}\mid q^3
\end{equation}
Now (\ref{3}) and (\ref{4}) imply $2(b(n-1)^2-1)\leq bn^2$ and hence
$n\leq4$.

If $n=2$ then $q_1^{4}=q^6$ and hence $q_1^{2}=q^3$. Therefore
$q_1=q^{3/2}.$ This case cannot happen since $\mathrm{PSp}_4(q_1)$
has a unipotent character of degree $q_1(q_1-1)^2/2$ (see
\cite[Table 23]{Clasen}) and
$q_1^2(q_1-1)^2/2=q^{3}(q^{3/2}-1)^2/2\notin\cd(\PSL_4(q))$. If
$n=3$ then $q_1^{9}=q^6$. We see that $q_1=q^{2/3}.$ Taking a
unipotent character of $S$ of degree $q_1(q_1-1)^2(q_1^2+q_1+1)$
(see \cite[Table 24]{Clasen}) and arguing as in the case $n=2$, we
get a contradiction. Finally, if $n=4$ then $q_1^{16}=q^6$. We see
that $q_1=q^{3/8}.$ Taking a unipotent character of $S$ of degree
$q_1(q_1^2-q_1+1)(q_1^4+1)$ (see \cite[Table 25]{Clasen}) and
arguing as in the case $n=2$, we get a contradiction.

\medskip

(iii) $S\cong P\Omega_{2n}^\pm(q_1)$ where $q_1=p^b$ and $n\geq4$.
Then we get
\begin{equation}\label{5}q^6=|S|_p=p^{bn(n-1)}.\end{equation} From Table~\ref{unipotent-character}, $S$ has a unipotent
character $\chi$ different from the Steinberg character with
$\chi(1)_p=p^{b(n^2-3n+3)}$ when $S\cong
\mathrm{P}\Omega_{2n}^+(q_1)$ and $\chi(1)_p=p^{b(n^2-3n+2)}$ when
$S\cong \mathrm{P}\Omega_{2n}^-(q_1)$. The fact that
$\chi(1)\in\cd(\PSL_4(q))$ and Table~\ref{character} then yield
\begin{equation}\label{6}p^{b(n^2-3n+2)}\mid q^3
\end{equation}
Now (\ref{5}) and (\ref{6}) imply $2(n^2-3n+2)\leq n(n-1)$ and hence
$n=4$. Furthermore, $S\cong \mathrm{P}\Omega_{8}^-(q_1)$.

Since $n=4$, we have $q_1^{12}=q^6$. Therefore, $q_1=q^{1/2}.$
Taking a unipotent character of $S$ of degree $q_1(q_1^4+1)$ (see
\cite[Table 29]{Clasen}) and arguing as in the previous case, we get
a contradiction.
\end{proof}

\begin{lemma}\label{eliminate exceptional groups} With the hypothesis of Proposition~\ref{proposition},
$S$ is not a simple Lie type group of exceptional type.
\end{lemma}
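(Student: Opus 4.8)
The plan is to mimic the strategy used in Lemma~\ref{eliminate classical groups}: exploit the fact that the Steinberg character $\St$ of $S$ is extendable to $\Aut(S)$, so by Lemma~\ref{lemma2} we get $|S|_p^k = \St(1)^k \in \cd(\PSL_4(q))$. Since $q^6$ is the only prime-power degree of $\PSL_4(q)$ by \cite[Theorem~1.1]{Malle-Zalesskii}, we must have $|S|_r^k = q^6$, where $r$ is the defining characteristic of $S$, forcing $r = p$ and $|S|_p^k = q^6$. For each exceptional type, Table~\ref{unipotent-character} records $|S|_p = \St(1)$ (e.g. $q_1^{13}$ for $F_4$ in odd characteristic, $q_1^{25}$ for $E_6$, etc.), and the constraint $|S|_p^k \mid q^6$ immediately shows $|S|_p$ is small — in fact for all exceptional types of rank $\geq 4$ (that is, $F_4, E_6, {}^2E_6, E_7, E_8$) the $p$-part of $|S|$ is at least $q_1^{13}$, so $|S|_p^k = q^6$ is impossible since $13 > 6$. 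This kills all the large exceptional types outright, leaving only the small-rank families ${}^2B_2(q_1)$, ${}^2G_2(q_1)$, $G_2(q_1)$, ${}^3D_4(q_1)$, and ${}^2F_4(q_1)$ (the Suzuki and Ree types and $G_2$, ${}^3D_4$).

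For these remaining small-rank groups I would argue exactly as in the classical case: the equation $|S|_p^k = q^6$ together with an auxiliary unipotent (or other) character degree $\tau(1)$ of $S$ satisfying $\tau(1)\St(1)^{k-1} \mid$ some degree of $\PSL_4(q)$ forces $k$ to be very small (indeed $k=1$ in each surviving case, by the same parity/size argument used to show $k \leq 2$ and then $k=1$ in the classical lemma, using Lemma~\ref{ppd} with the primitive prime divisors $\ell_1, \ell_2$). Once $k=1$, so $S \cong$ the relevant exceptional group of the same defining characteristic $p$ with $|S|_p = q^6$, I would produce an explicit irreducible character $\chi \in \Irr(S)$ that is extendable to $\Aut(S)$ with $\chi(1) \notin \cd(\PSL_4(q))$, or an irreducible $\psi$ whose degree divides no degree of $\PSL_4(q)$ — the character degrees of these groups are tabulated in \cite{Lubeck} and the unipotent degrees in \cite{Carter}. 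For instance, $G_2(q_1)$ has a unipotent character of degree $q_1\Phi_3(q_1)\Phi_6(q_1)/3$ or $q_1^3\Phi_3(q_1)\Phi_6(q_1)$ type degrees involving $\Phi_6$, which do not match anything in Table~\ref{character}; similarly ${}^3D_4(q_1)$ has degrees divisible by $\Phi_{12}(q_1)$, $\ord$ of which is too large to divide any PSL$_4(q)$ degree; the Suzuki and Ree groups have $|S|_p = q_1^2$ or $q_1^3$ or $q_1^6$, and their characteristic (2 or 3) combined with $|S|_p = q^6$ pins down $q_1$ and then an explicit degree (e.g. a degree divisible by $\sqrt{q_1/2}$ or $\sqrt{q_1/3}$, or the degree $(q_1-1)(q_1^2+1)$ for ${}^2B_2(q_1)$) fails to lie in $\cd(\PSL_4(q))$.

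The main obstacle I anticipate is the bookkeeping in the surviving small-rank cases, particularly ${}^3D_4(q_1)$ and ${}^2F_4(q_1)$: one has to carefully match the exact condition $q_1^{?} = q^6$ (which may force $q_1$ to be a fractional power of $q$, as happened repeatedly in Lemma~\ref{eliminate classical groups}, e.g. $q_1 = q^{1/2}$ or $q^{2/3}$), and then verify that a chosen character degree of $S(q_1)$, now a polynomial in $q^{1/6}$ or similar, genuinely fails to divide any degree in Table~\ref{character}. The Suzuki/Ree cases ${}^2B_2$ and ${}^2G_2$ require a little extra care because $\sqrt{q_1/2}$ (resp. $\sqrt{q_1/3}$) must be an integer, constraining $q_1$ to be an odd power of the prime, but once $|S|_p = q^6$ is imposed the only candidates are quickly eliminated on order grounds — e.g. $2 \in \pi(S)$ forces $p = 2$ for ${}^2B_2$, and then $q_1^2 = q^6$ gives $q_1 = q^3$, but $q^3$ is an odd power of $2$ only if $q$ is, and then the degree $(q_1^2+1) = (q^6+1)$ divides no degree of $\PSL_4(q)$, a contradiction. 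Modulo this finite case-checking against \cite{Lubeck}, the argument is routine and parallels the classical case verbatim.
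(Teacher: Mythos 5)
There is a genuine gap in your treatment of the large-rank exceptional types, which is precisely the part you declare to be killed ``outright.'' From $|S|_p^k=q^6$ you cannot conclude a contradiction merely because the exponent of $q_1$ in $|S|_p$ exceeds $6$: the relation is $q_1^{Nk}=q^6$ with $q_1=p^b$ and $q=p^f$ powers of the \emph{same} prime, so it only forces $q_1$ to be a small (possibly fractional) power of $q$, exactly as happened throughout Lemma~\ref{eliminate classical groups} (where $q_1=q^{3/2},q^{2/3},q^{3/8},q^{1/2}$ all had to be dealt with). For instance $S=F_4(p)$ with $q=p^4$ satisfies $|S|_p=p^{24}=q^6$, and nothing in your argument excludes it; the same goes for $E_6,{}^2E_6,E_7,E_8$ with suitable $f$. (You have also conflated the entries of Table~\ref{unipotent-character}, which are $p$-parts of certain non-Steinberg unipotent degrees, with $|S|_p=\St(1)$; for $F_4$ the latter is $q_1^{24}$, not $q_1^{13}$.) The paper closes this case uniformly: each of ${}^3D_4$, ${}^2F_4$, $F_4$, $E_6$, ${}^2E_6$, $E_7$, $E_8$ has a non-Steinberg unipotent character whose degree has $p$-part strictly larger than $\sqrt{|S|_p}=\sqrt{q^6}=q^3$, whereas by Table~\ref{character} every degree of $\PSL_4(q)$ other than $q^6$ has $p$-part at most $q^3$. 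Note this is independent of how $q_1$ relates to $q$, and it also disposes of ${}^3D_4$ and ${}^2F_4$, the cases you flagged as the main bookkeeping obstacle.

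Your remaining steps are essentially the paper's: $k=1$ is imported from the proof of Lemma~\ref{eliminate classical groups} (the argument there using $1\times\St$ and Lemma~\ref{ppd} is type-independent), $G_2(q_1)$ is eliminated via the unipotent degree $q\Phi_3\Phi_6/3$ after $q_1=q$, and ${}^2B_2$, ${}^2G_2$ are handled by the oddness-of-exponent/order constraints much as you sketch (the paper uses the degree $\sqrt{q_1/2}(q_1-1)$ for ${}^2B_2$ and the parity obstruction $3^{2m+1}=q^2$ for ${}^2G_2$). So the proposal is on the right track in outline, but the dismissal of $F_4,E_6,{}^2E_6,E_7,E_8$ as written is not a proof and needs to be replaced by the $p$-part comparison against $q^3$.
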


\begin{proof} It follows from the proof of the previous lemma that $k=1.$
First we eliminate the case $S\cong \ta B_2(q_1)=\ta B_2(2^{2m+1})$
where $m\geq1$. Assume that $2^{2(2m+1)}=q^6$ and hence
$2^{(2m+1)}=q^3$. Also, $S$ has a unipotent character of degree
$\sqrt{q_1/2}(q_1-1)=2^m(2^{2m+1}-1)$ and therefore
$2^{m}(2^{2m+1}-1)\in\cd(\PSL_4(q))$. It follows that $2^{m}\mid
q^3$. Together with the equality $2^{2m+1}=q^3$, we obtain $m=1$.
This case can be ruled out easily by using Atlas. From now on we
assume that $S$ is one of the simple groups of exceptional Lie type
different from $\ta B_2$.

\medskip

(i) $S\cong G_2(q_1)$ where $q_1=p^b$. Then we have
$q_1^6=|S|_p=q^6$ and hence $q_1=q$. That means $S\cong G_2(q)$ and
$S$ has a unipotent character of degree $q\Phi_3\Phi_6/3$ (see
\cite[Table 33]{Clasen}). However, this is not a degree of
$\PSL_4(q)$.

\medskip

(ii) $S\cong \ta G_2(q_1)=\ta G_2(3^{2m+1})$ where $m\geq 1$. Then
we have $q_1^3=|S|_p=q^6$ and hence $3^{2m+1}=q_1=q^2$, which is
impossible.

\medskip

(iii) For the remaining groups, $S$ has a unipotent character (see
Table~\ref{unipotent-character}) different from the Steinberg
character whose degree has $p$-part larger than
$\sqrt{|S|_p}=\sqrt{q^6}=q^3$. By Lemma~\ref{lemma2}, this degree is
also a degree of $K$ and therefore a degree of $\PSL_4(q)$. However,
all degrees different from $q^6$ of $\PSL_4(q)$ has $p$-part at most
$q^3$ by Table~\ref{character}. This leads to a contradiction.
\end{proof}


\section{Outer automorphisms of $\PSL_4(q)$}\label{step3}
In section~\ref{step 2}, we have shown that $G'/M\cong\PSL_4(q)$. In
this section, we will prove that
$$G/M\cong G'/M\times C/M,$$ where $C/M=C_{G/M}(G'/M)$. Remark that $\cd(G/M)\subseteq \cd(\PSL_4(q))$.
For simple notation, we can assume that $M=1$ but only assume
$\cd(G)\subseteq \cd(\PSL_4(q))$. In other words, we have to prove
the following:

\begin{proposition}\label{step 4} Let $G$ be a finite group such
that $G'\cong\PSL_4(q)$ and $\cd(G)\subseteq\cd(\PSL_4(q))$. Then
$G\cong G'\times C_G(G')$.
\end{proposition}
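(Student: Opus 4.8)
The plan is to analyze the outer automorphism group $\Out(\PSL_4(q))$ and show that none of the graph, field, or diagonal automorphisms can genuinely be involved in $G$, using the fact that $\cd(G)\subseteq\cd(\PSL_4(q))$. Write $C=C_G(G')$. Since $G'\cong\PSL_4(q)$ is simple and $G'\cap C\le Z(G')=1$, the subgroup $G'C$ is a direct product $G'\times C$, and $G/C$ embeds into $\Aut(\PSL_4(q))=\Inn\rtimes\Out$ with $\Inn\cong\PSL_4(q)$ corresponding to $G'C/C$. Thus it suffices to prove $G=G'C$, i.e. that $G/C$ induces only inner automorphisms on $G'$. Recall $\Out(\PSL_4(q))$ is generated by a diagonal automorphism of order $d=(4,q-1)$, a field automorphism of order $f$ (where $q=p^f$), and a graph automorphism of order $2$, with a known presentation; in particular $|{\Out}(\PSL_4(q))|=2df$.

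First I would suppose for contradiction that $G\ne G'C$ and pick a prime $s\mid |G/G'C|$, so that $G'C$ is contained in a proper subgroup $R$ of $G$ with $R/G'C\cong\ZZ_s$ (or work with $G/C$ directly). The core of the argument is then: for each type of outer automorphism $\sigma$, I would exhibit an irreducible character $\theta\in\Irr(\PSL_4(q))$ invariant under $\langle\Inn,\sigma\rangle$ and extendable to that group, so that by Gallagher (Lemma \ref{Gallagher}) all degrees $s\cdot\theta(1)$ (or more precisely $m\cdot\theta(1)$ for $m$ ranging over $\cd$ of the cyclic quotient) lie in $\cd(G)$, and then check against Table \ref{character} that some such multiple is not a degree of $\PSL_4(q)$. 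A cleaner variant: use that a single irreducible character of $G$ lying over $\theta$ and not extending would have degree a proper multiple of $\theta(1)$ forced to lie in $\cd(\PSL_4(q))$. The natural candidates for $\theta$ are the small well-understood characters — the Steinberg character of degree $q^6$ (invariant under all of $\Aut$, extendable, and with $q^6\cdot m\notin\cd$ for $m>1$ by Lemma \ref{maximal degree among degrees}-type reasoning), the character of degree $q\Phi_3$, and the unipotent characters of degrees $\Phi_1^2\Phi_3$, $q^2\Phi_4$, etc. Since $\Phi_1^3\Phi_2\Phi_3$ and $\Phi_1^2\Phi_2^2\Phi_4$ are maximal with respect to divisibility (Lemma \ref{maximal degree among degrees}) and $q^6$ has no proper multiple among the degrees, these provide the obstructions to any prime $s$ acting nontrivially.

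The main obstacle I expect is the bookkeeping for the \emph{diagonal} automorphisms when $d=(4,q-1)>1$ (i.e. $q\equiv1\pmod 4$, so $d=4$), because here the degrees of $\PSL_4(q)$ already differ from those of $\mathrm{SL}_4(q)$ — some characters of $\mathrm{SL}_4(q)$ fuse or fail to descend, which is exactly the content of fractional degrees like $\frac14\Phi_1^3\Phi_2\Phi_3$ in Table \ref{character} and of Lemma \ref{cd(SL4) is not in cd(PSL)}. I would handle this by studying how $\PGL_4(q)$-irreducible characters restrict to $\PSL_4(q)$: a diagonal automorphism of order dividing $d$ permutes the constituents, and for a $\PGL_4(q)$-character whose restriction to $\PSL_4(q)$ stays irreducible (a semisimple character associated to a class not fixed by the dual of the diagonal action), Gallagher forces the product with the cyclic character of that degree into $\cd(G)$; comparing with Table \ref{character} rules this out. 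The graph and field automorphism cases are easier: the Steinberg degree $q^6$ is fixed and extendable, so any $s\mid 2f$ would force $s q^6\in\cd(\PSL_4(q))$, contradicting that $q^6$ admits no proper multiple as a degree. Combining all three cases gives $G/C$ inducing only inner automorphisms, hence $G=G'\times C_G(G')$ as claimed; the formal conclusion is immediate once the outer action is excluded.
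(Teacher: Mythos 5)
Your central mechanism is inverted, and as stated it cannot produce a contradiction. You propose to exhibit a character $\theta$ of $G'$ that is \emph{invariant} under the relevant subgroup of $G$ and \emph{extendable} to it, and then to apply Gallagher to put $s\cdot\theta(1)$ into $\cd(G)$. But Gallagher only yields the degrees $\theta(1)\tau(1)$ with $\tau$ running over the irreducible characters of the quotient, and every quotient you extend over ($G/G'C_G(G')$, being a quotient of the abelian group $G/G'$, or your $R/G'C_G(G')\cong\ZZ_s$) is abelian; so all $\tau(1)=1$ and you recover only $\theta(1)$ itself. Your ``cleaner variant'' (invariant but non-extendable) also cannot occur over a cyclic quotient. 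In particular the Steinberg character, which is $\Aut(S)$-invariant and extendable by Feit's theorem, is exactly the wrong tool: it never forces $sq^6$ into $\cd(G)$, so your treatment of the field and graph automorphisms collapses. The way to force a \emph{proper} multiple of $\theta(1)$ into $\cd(G)$ is to find $\theta$ that is \emph{not} invariant under the outer automorphism being excluded, so that $|G:I_G(\theta)|>1$ and Clifford theory gives $\chi(1)=|G:I_G(\theta)|\,e\,\theta(1)$. This is what the paper does: for a field automorphism $\sigma^b$ it takes the semisimple character of degree $\Phi_1^3\Phi_2\Phi_3$ attached to an element whose order is a primitive prime divisor of $q^4-1$ (so it cannot be $\sigma^b$-fixed), and for the graph automorphism $\tau$ it takes a \emph{non-real} semisimple character of degree $\Phi_1^2\Phi_2^2\Phi_4$, using $\chi_s^\tau=\overline{\chi_s}$; Lemma~\ref{maximal degree among degrees} then gives the contradiction. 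The unipotent candidates you list ($q^6$, $q\Phi_3$, $q^2\Phi_4$) are stable under all automorphisms and hence useless for detecting non-invariance.

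The diagonal case is also not reachable by your sketch, and it is the genuinely delicate one. There the paper does not manufacture a new degree at all: it exploits the fractional degrees $\Phi_3\Phi_4/2$ (for $q\equiv 3\bmod 4$) and $\Phi_1^2\Phi_3\Phi_4/4$ (for $q\equiv 1\bmod 4$), whose characters are fused under $\PGL_4(q)$, first shows $C_G(G')$ is abelian (via the Steinberg character times a putative nonlinear character of $C_G(G')$), and then runs a Clifford analysis over $G'\times C_G(G')$ showing that such a degree of $G$ would force either a degree $\Phi_3\Phi_4/2$ of $\PGL_4(q)$ or a degree $\Phi_3\Phi_4/4$ of $\PSL_4(q)$, neither of which exists. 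This step requires the multiplicity data from L\"ubeck's tables comparing $\SL_4(q)$ with $\PGL_4(q)$, which your outline does not engage with. As written, your proposal does not close any of the three cases.
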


\begin{proof} Assume the contrary that $G'\times C_G(G')$ is a proper subgroup of
$G$. Then $G$ induces on $G'$ some outer automorphism $\alpha$. Let
$q=p^f$. It is well known (cf. \cite[Theorem 2.5.12]{Gorenstein})
that $$\Out(G')=\langle d\rangle: (\langle \sigma\rangle\times
\langle \tau\rangle),$$ where $d$ is a diagonal automorphism of
degree $(q-1,4)$, $\sigma$ is the automorphism of $G'$ of order $f$
induced by the field automorphism $x\mapsto x^p$, and $\tau$ is the
inverse-transpose. Moreover, $\tau$ inverts the cyclic group
$\langle d\rangle$.

\medskip

(i) First we consider the case where $G/C_G(G')$ possesses only
inner and diagonal automorphisms. This means $q$ must be odd.

\textbf{Case} $\mathbf{q\equiv 3\bmod{4}}$: We then have
$$G/C_G(G')\cong \PGL_4(q).$$ From the list of degrees of $\PGL_4(q)$
and $\SL_4(q)$ in \cite{Lubeck}, we see that $\PGL_4(q)$ has $q-2$
irreducible characters of degree $\Phi_3\Phi_4$ while $\SL_4(q)$ has
$(q-3)/2$ characters of degree $\Phi_3\Phi_4$ and $2$ characters of
degree $\Phi_3\Phi_4/2$. It follows that $\PSL_4(q)$ has exactly two
irreducible characters of degree $\Phi_3\Phi_4/2$ and they are fused
under $\PGL_4(q)$.

We will get to a contradiction by showing that $\Phi_3\Phi_4/2$ is
not a degree of $G$. Assume by contrary that $\chi$ is an
irreducible character of $G$ of such degree. As $G/C_G(G')\cong
\PGL_4(q)$, which has no irreducible character of degree
$\Phi_3\Phi_4/2$, $\chi\notin\Irr(G/C_G(G'))$. Assume that $C_G(G')$
is not abelian and let $\lambda$ be a nonlinear irreducible
character of $C_G(G')$. Then $\St_{G'}\times \lambda\in\Irr(G'\times
C_G(G'))$ where $\St_{G'}$ is the Steinberg character of
$G'=\PSL_4(q)$. This is a contradiction since
$\St_{G'}(1)\lambda(1)=q^6\lambda(1)$ divides none of the degree of
$G$.

We have shown that $C_G(G')$ is abelian. Let $\psi\times \theta$ an
irreducible character $G'\times C_G(G')$ lying under $\chi$. As
$C_G(G')$ is abelian, we know $\theta(1)=1$. Assume that $\theta$ is
not $G$-invariant. As $G'$ centralizes $C_G(G')$ and $|G: G'\times
C_G(G')|=2$, it follows that $I_G(\theta)=G'\times C_G(G')$. Let
$\theta_0=1_{G'}\times \theta\in\Irr(I_G(\theta))$. Then $\theta_0$
is an extension of $\theta$ to $I_G(\theta)$, and therefore
$\theta_0^G\in\Irr(G)$ by Clifford theory. So
$$\theta_0^G(1)=|G:(G'\times C_G(G'))|\theta_0(1)=2$$ is a degree of
$G$, which is impossible. Thus $\theta$ is $G$-invariant.

If $\psi$ is $G$-invariant then $\psi$, considered as a character of
$G'C_G(G')/C_G(G')$, is also $G/C_G(G')$-invariant. As
$|G:G'C_G(G')|=2$, we deduce from \cite[Corollary 6.20]{Isaacs} that
$\psi$ extends to $G/C_G(G')$. In particular,
$$\psi(1)\in\cd(G/C_G(G'))=\cd(\PGL_4(q)).$$ On the other hand, we
also have that $\psi\times\theta$ is $G$-invariant and hence it
extends to $G$. So $$\chi(1)=(\psi\times\theta)(1)=\psi(1)$$ and
therefore $\Phi_3\Phi_4/2$ is a degree of $\PGL_4(q)$, a
contradiction. Thus $\psi$ is not $G$-invariant and hence
$\psi\times \theta$ is not $G$-invariant and $I_G(\psi\times
\theta)=G'\times C_G(G')$. Using Clifford theory again, we obtain
$(\psi\times\theta)^G\in\Irr(G)$ and hence
$$\chi(1)=(\psi\times\theta)^G(1)=2\psi(1).$$ Recall that
$\chi(1)=\Phi_3\Phi_4/2$. We then have $\psi(1)=\Phi_3\Phi_4/4$ is a
degree of $G'=\PSL_4(q)$, which is impossible.

\textbf{Case} $\mathbf{q\equiv 1\bmod{4}}$: In this case, $d$ is a
diagonal automorphism of $G'$ of degree $4$. We then have
$$G/C_G(G')\cong G':\langle d^2\rangle \text{ or } \PGL_4(q).$$ From the list
of degrees of $\PGL_4(q)$ and $\SL_4(q)$ in \cite{Lubeck}, we see
that $G'$ has exactly four irreducible characters of degree
$\Phi_1^2\Phi_3\Phi_4/4$, which are fused under $G':\langle
d^2\rangle$ to form two characters of degree
$\Phi_1^2\Phi_3\Phi_4/2$ and under $\PGL_4(q)$ to form one character
of degree $\Phi_1^2\Phi_3\Phi_4$.

We will get to a contradiction by showing that
$\Phi_1^2\Phi_3\Phi_4/4$ is not a degree of $G$. Assume by contrary
that $\chi$ is an irreducible character of $G$ of such degree. As
$G/C_G(G')\cong G':\langle d^2\rangle$ or $\PGL_4(q)$, which has no
irreducible character of degree $\Phi_1^2\Phi_3\Phi_4/4$ as $q\neq
5$ (when $q=5$, there is a coincidence
$\Phi_1^2\Phi_3\Phi_4/4=\Phi_1\Phi_3\Phi_4\in\cd(\PGL_4(q))$),
$\chi\notin\Irr(G/C_G(G'))$. Assume that $C_G(G')$ is not abelian
and let $\lambda$ be a nonlinear irreducible character of $C_G(G')$.
Then $$\St_{G'}\times \lambda\in\Irr(G'\times C_G(G')),$$ where
$\St_{G'}$ is the Steinberg character of $G'=\PSL_4(q)$. This is a
contradiction since $\St_{G'}(1)\lambda(1)=q^6\lambda(1)$ divides
none of the degree of $G$.

We have shown that $C_G(G')$ is abelian. Let $\psi\times \theta$ be
an irreducible character $G'\times C_G(G')$ lying under $\chi$. As
$C_G(G')$ is abelian, we know $\theta(1)=1$. Assume that $\theta$ is
not $G$-invariant. As $G'$ centralizes $C_G(G')$ and $|G: G'\times
C_G(G')|=4$ or $2$, it follows that
$$|I_G(\theta):(G'\times C_G(G'))|=1 \text{ or } 2 \text{ if } |G: G'\times
C_G(G')|=4$$ and $$|I_G(\theta):(G'\times C_G(G'))|=1 \text{ if }
|G: G'\times C_G(G')|=2.$$ Let $\theta_0$ be an extension of
$1_{G'}\times \theta\in\Irr(G'\times C_G(G'))$ to $I_G(\theta)$.
Then $\theta_0^G\in\Irr(G)$ by Clifford theory and hence
$$\theta_0^G(1)=|G:I_G(\theta)|\theta_0(1)=2 \text{ or } 4,$$ which is a degree of
$G$, a contradiction. Thus $\theta$ is $G$-invariant.

If $\psi$ is $G$-invariant then $\psi$, considered as a character of
$G'C_G(G')/C_G(G')$, is also $G/C_G(G')$-invariant. As
$|G:G'C_G(G')|$ is cyclic, we deduce from \cite[Corollary
11.22]{Isaacs} that $\psi$ extends to $G/C_G(G')$. In particular,
$\psi(1)\in\cd(G/C_G(G'))$. On the other hand, we also have that
$\psi\times\theta$ is $G$-invariant and hence it extends to $G$. So
$$\chi(1)=(\psi\times\theta)(1)=\psi(1).$$ This means
$\Phi_1^2\Phi_3\Phi_4/4$ is a degree of $G/C_G(G')$, which is
$G':\langle d^2\rangle$ or $\PGL_4(q)$, a contradiction. So $\psi$
is not $G$-invariant and hence $\psi\times \theta$ is not
$G$-invariant and therefore $$|I_G(\psi\times \theta):G'\times
C_G(G')|=1 \text{ or } 2.$$ Let $\varphi$ be an extension of
$\psi\times \theta$ to $I_G(\psi\times \theta)$. Using Clifford
theory, we obtain $(\varphi)^G\in\Irr(G)$ and hence
$$(\varphi)^G(1)=|G:I_G(G'\times C_G(G'))|\psi(1),$$ which is
$2\psi(1)$ or $4\psi(1)$. Since $\chi$ lies over $\psi\times\theta$,
it follows that $\chi(1)=2\psi(1)$ or $4\psi(1)$. Recall that
$\chi(1)=\Phi_1^2\Phi_3\Phi_4/4$. So
$\psi(1)=\Phi_1^2\Phi_3\Phi_4/8$ or $\Phi_1^2\Phi_3\Phi_4/16$. This
contradicts the fact that $\psi(1)$ is a degree of $G'=\PSL_4(q)$.

\medskip

(ii) Next we consider the case $\alpha=d^a\sigma^b\tau^c$ where
$0\leq a\leq (q-1,4)$, $0< b<f$, and $0\leq c\leq 1$. By a classical
result of Zsigmondy (cf.~\cite{Zsigmondy}), we can choose
$\omega\in\FF$ of order which is a primitive prime divisor of
$p^{4f}-1=q^4-1$, that is a prime divisor of $p^{4f}-1$ that does
not divide $\prod_{i=1}^{4f-1}(p^i-1)$. We then choose a semisimple
element $s\in \SL_4(q)$ with eigenvalues
$\omega,\omega^q,\omega^{q^2}$, and $\omega^{q^3}$. The image of $s$
under the canonical projection $\GL_4(q)\rightarrow \PGL_4(q)$ is a
semisimple element of $\PGL_4(q)$. Abusing the notation, we denote
it by $s$. Since $s,s^{-1}$, and $\tau(s)$ are all conjugate in
$\PGL_4(q)$, the semisimple character $\chi_s\in\Irr(\SL_4(q))$ of
degree $\Phi_1^3\Phi_2\Phi_3$ is real by~\cite[Lemma~2.5]{Dolfi} and
hence
$$(\chi_s)^\tau=\chi_{\tau(s)}=\chi_{s^{-1}}=\overline{\chi}_s=\chi_s$$
by~\cite[Corollary~2.5]{Navarro}. In other words, $\chi_s$ is
invariant under $\tau$.

Since $\PGL_4(q)$ has no degree which is a proper multiple of
$\Phi_1^3\Phi_2\Phi_3$, we obtain that $\chi_s$ is also
$d$-invariant. By checking the multiplicities of character degrees
of $\SL_4(q)$ and $\PGL_4(q)$, we see that there exists an $s$ as
above so that $\chi_s\in\Irr(\PSL_4(q))$.
Using~\cite[Lemma~2.5]{Dolfi} again, we have furthermore that
$\chi_s$ is not $\sigma^b$-invariant since $|s|=|\omega|$ does not
divide $|\PGL_4(p^b)|$. We have shown that $\chi_s$ is not
$\alpha$-invariant. Therefore $G$ has a degree which is a proper
multiple of $\chi_s(1)=\Phi_1^3\Phi_2\Phi_3$, a contradiction.

\medskip

(iii) Finally we consider the case $\alpha=d^a\tau$ where $0\leq
a\leq (q-1,4)$. Now we choose $\omega\in\FF$ of order which is a
primitive prime divisor of $p^{3f}=q^3-1$, that is a prime divisor
of $p^{3f}-1$ that does not divide $\prod_{i=1}^{3f-1}(p^i-1)$. We
then choose a semisimple element $s\in \SL_4(q)$ with eigenvalues
$1,\omega,\omega^q$, and $\omega^{q^2}$. The image of $s$ under the
canonical projection $\GL_4(q)\rightarrow \PGL_4(q)$ is a semisimple
element of $\PGL_4(q)$. Abusing the notation, we denote it by $s$.
It is routine to check that $\omega^{-1}$ is not an eigenvalue of
$s$. In particular, $s$ is not conjugate to $s^{-1}$ in $\PGL_4(q)$
and so $s$ is not real in $\PGL_4(q)$. It follows by~\cite[Lemma
2.5]{Dolfi} that the semisimple character $\chi_s\in \Irr(\SL_4(q))$
of degree $\Phi_1^2\Phi_2^2\Phi_4$ is not real. Moreover, $\chi_s$
is trivial at $Z(\SL_4(q))$ and therefore
$\chi_s\in\Irr(\PSL_4(q))$. Now using \cite[Corollary 2.5]{Navarro}
and the fact that $\tau(s)$ is conjugate to $s^{-1}$ in $\PGL_4(q)$,
we get
$$(\chi_s)^\tau=\chi_{\tau(s)}=\chi_{s^{-1}}=\overline{\chi}_s.$$
Therefore $\chi_s$ is moved by $\tau$ since it is not real. This is
a contradiction since $G$ does not have any degree which is a proper
multiple of $\chi_s(1)=\Phi_1^2\Phi_2^2\Phi_4$.
\end{proof}


\section{Linear characters of $M$ are $G'$-invariant} \label{step4}

We assume in this section that $q\geq 13.$
In this step, we prove that if $\theta\in \Irr(M)$ and $\theta(1)=1$
then $I:=I_{G'}(\theta)=G'$. Assume by contradiction that $I<G'$,
then $I\leq U<G'$ for some maximal subgroup $U$ of $G'$. Suppose
that
$$\theta^I=\sum _i\phi_i, \text{ where }\phi_i\in\Irr(I).$$ We then have
$\phi_i^{G'}\in\Irr(G')$ and hence $\phi_i(1)|G':I|\in\cd(G')$. It
follows that
$$\phi_i(1)|G':U||U:I| \text{ divides some degree of }G.$$
For simpler notation, let $t=|U:I|$, we have
$$t\phi_i(1)|G':U|\text{ divides some degree of }G.$$ In
particular, the index of $U/M$ in $G'/M$ divides some degree of $G$.

The maximal subgroups of $\textrm{PSL}_4(q)$ as well as other groups
in this paper are determined by Kleidman
in~\cite{Kleidman,Kleidman-Liebeck} and given in the following
tables. In these tables, $d=(q-1,4)$ and the symbol $^\wedge$ means
we are giving the structure of the preimage in special linear or
symplectic groups. Let $\tau$ be the natural projection from
$\SL_4(q)$ to $\PSL_4(q)$. The following lemma will eliminate most
of possibilities of $U/M$.

\begin{table}[h]\caption{Maximal subgroups of $\textrm{PSL}_4(q)$ (see \cite{Kleidman}).}\label{maximal-subgroups}
\begin{tabular}{lll}
Subgroup & Condition & Index\\\hline
$^\wedge[q^3]:\textrm{GL}_3(q)$ & & $\Phi_2\Phi_4$ $$\\
 $^\wedge[q^4]:(\textrm{SL}_2(q)\times
 SL_2(q)).\ZZ_{q-1}$&&$\Phi_3\Phi_4$\\
$^\wedge(\ZZ_{q-1})^3.S_4$&$q\geq 5$&$q^6\Phi_2^2\Phi_3\Phi_4/24$\\
 $^\wedge(\textrm{SL}_2(q)\times \textrm{SL}_2(q)).\ZZ_{q-1}.2$&$q\geq4$&$q^4\Phi_3\Phi_4/2$\\
  $(\textrm{PSL}_2(q^2)\times\ZZ_{q+1}).2$& &$q^4\Phi_1^2\Phi_3(q-1,2)/2d$\\
   $\textrm{PSL}_4(q_0).(q_0-1,4)$
   &$q=q_0^b$, $b$ prime & $\frac{|\PSL_4(q)|}{(q_0-1,4)|\textrm{PSL}_4(q_0)|}$\\
    $2^4. S_6$&$q=p\equiv1\bmod{8}$ &$|\textrm{PSL}_4(q)|/11520$\\
     $2^4.A_6$&$q=p\equiv5\bmod{8}$ &$|\textrm{PSL}_4(q)|/5760$\\

      $^\wedge\textrm{Sp}_4(q)\cdot(q-1,2)$&$$ &${q^2\Phi_1\Phi_3}/{(q-1,2)}$\\

        $\textrm{PSO}^+_4(q).2$&$q$ odd &$q^4\Phi_1\Phi_3\Phi_4/d$\\
         $\textrm{PSO}^-_4(q).2$&$q$ odd &$q^4\Phi_1^2\Phi_2\Phi_3/d$\\
         $\textrm{PSU}_4(q_0).(q-1,2)$&$q=q_0^2$ &$\frac{|\textrm{PSL}_4(q)|}{(q-1,2)|\textrm{PSU}_4(q_0)|}$\\
         $A_7$&$q=p\equiv1,2,4\bmod{7}$ &$|\textrm{PSL}_4(q)|/2520$\\
         $\textrm{PSU}_4(2)$&$q=p\equiv1\bmod{6}$ &$|\textrm{PSL}_4(q)|/25920$\\\hline
\end{tabular}
\end{table}

\begin{table}[h]\caption{Maximal subgroups of $\PSL_2(q)$ (see \cite{Kleidman}).}\label{Tab3}
\begin{tabular}{llll}\medskip
Subgroup & Condition &Index\\\hline
$D_{(q-1)}$ &$q\geq 13,$ odd&$\frac{1}{2}q\Phi_2$\\
$D_{2(q-1)}$ &$q$ even&$\frac{1}{2}q\Phi_2$\\
$D_{(q+1)}$ &$q\neq 7,9,$ odd&$\frac{1}{2}q\Phi_1$\\
$D_{2(q+1)}$ &$q$ even&$\frac{1}{2}q\Phi_1$\\
Borel subgroup &&$\Phi_2$\\
$\PSL_2(q_0)(2,\alpha)$ &$q=q_0^\alpha$&\\
$S_{4}$ &$q=p\equiv \pm 1 \bmod{8}$&\\
        &$q=p^2,3<p\equiv \pm 3 \bmod{10}$&\\
$A_{4}$ &$q=p\equiv \pm 3 \bmod{8}, q>3$&\\
$A_{5}$ &$q=p\equiv \pm 1 \bmod{10}$&\\
 &$q=p^2,p\equiv \pm 3 \bmod{10}$&\\
  \hline
\end{tabular}
\end{table}

\begin{table}[h]\caption{Maximal subgroups of $\PSL_3(q)$ (see \cite{Kleidman}).}\label{Tab1}
\begin{tabular}{llll}\medskip
Subgroup & Condition \\\hline
$^\wedge[q^2]:\GL_2(q)$ &\\
 $^\wedge(\ZZ_{q-1})^2.S_3$&$q\geq5$\\
$^\wedge \ZZ_{q^2+q+1}.3$&$q\neq4$\\
$\PSL_3(q_0).((q-1,3),b)$&$q=q_0^b$, $b$ prime\\
 $3^2.\SL_2(3)$ &$q=p\equiv1\bmod{9}$\\
  $3^2.Q_8$& $q=p\equiv4,7\bmod{9}$\\
   $\mathrm{SO}_3(q)$ &$q$ odd\\

       $\mathrm{PSU}_3(q_0)$&$q=q_0^2$ \\
       $A_6$&$p\equiv1,2,4,7,8,13\bmod{15}$\\
        $\PSL_2(7)$&$2<q=p\equiv 1,2,4\bmod{7}$\\
        \hline
\end{tabular}
\end{table}

\begin{table}[h]\caption{Maximal subgroups of $\mathrm{PSp}_4(q)$ (see \cite{Kleidman}).}\label{Tab2}
\begin{tabular}{llll}\medskip
Subgroup & Condition \\\hline
$^\wedge[q^3]:(\ZZ_{q-1}\circ \textrm{Sp}_2(q))$ &  $$\\
 $^\wedge[q^3]:\textrm{GL}_2(q)$&\\
$(\textrm{Sp}_2(q)\circ \textrm{Sp}_2(q)).2$&$q\geq3$\\
$^\wedge \textrm{GU}_2(q).2$&$q$ odd, $q\geq5$\\
 $\textrm{PSp}_2(q^2).2$&\\
  $^\wedge \textrm{GL}_2(q).2$& $q$ odd, $q\geq5$\\
   $\textrm{PSp}_4(q_0).(b,(q-1,2))$ &$q=q_0^b$, $b$ prime\\

       $2^4.\Omega_4^-(2)$&$q=p\equiv \pm3 \bmod{8}$\\
$2^4.\Omega_4^-(2)$&$q=p\equiv \pm1 \bmod{8}$\\

       $\textrm{O}_4^+(q)$&$q$ even\\
       $\textrm{O}_4^-(q)$&$q$ even\\
        $\textrm{Sz}(q)$&$q\geq8$, $q$ even, $\log_p(q)$ odd\\
         $\textrm{PSL}_2(q)$&$p\geq5$, $q\geq7$\\

         $S_6$&$q=p\equiv\pm1 \bmod{12}$\\
          $A_6$&$q=p\equiv2,\pm5 \bmod{12}$\\
\hline
\end{tabular}
\end{table}

\begin{lemma}\label{eliminate maximal subgroups} With the above
notation, one of the following happens.
\begin{enumerate}
\item[(i)] $U/M\cong P_a\cong \tau(^\wedge[q^3]:\GL_3(q))$,
$|G':U|=\Phi_2\Phi_4$ and for every $i,$ $t\phi_i(1)$ divides some
member of the set $\mathcal{A}_1,$ where
$$\mathcal{A}_1=\{q\Phi_2,\Phi_1^2\Phi_2,\Phi_1\Phi_3,q\Phi_3,q^3,\Phi_2\Phi_3\}.$$

\item[(ii)] $U/M\cong \tau(^\wedge\mathrm{Sp}_4(q)\cdot (q-1,2))$, $|G':U|=q^2\Phi_1\Phi_3/(2,q-1),$ and for every $i,$ $t\phi_i(1)$
divides $(2,q-1)\Phi_1.$

\item[(iii)] $U/M\cong P_b\cong \tau(^\wedge[q^4]:(\SL_2(q)\times
\SL_2(q)).\ZZ_{q-1})$, $|G':U|=\Phi_3\Phi_4$ and for every $i,$
$t\phi_i(1)$ divides some member of the set $\mathcal{A}_2,$ where
$$\mathcal{A}_2=
\{q\Phi_1,\Phi_1^2,q\Phi_2,\Phi_2^2,\Phi_1\Phi_2,q^2\}.$$
\end{enumerate}
\end{lemma}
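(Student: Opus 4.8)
The goal is to show that if $\theta\in\Irr(M)$ with $\theta(1)=1$ is not $G'$-invariant, then the maximal subgroup $U$ of $G'=\PSL_4(q)$ containing $I=I_{G'}(\theta)$ must be one of three specific parabolic-type subgroups, together with the stated divisibility constraints on $t\phi_i(1)$. The basic engine is the observation already recorded before the lemma: since $\phi_i^{G'}\in\Irr(G')$, the number $\phi_i(1)|G':I| = t\,\phi_i(1)\,|G':U|$ divides some degree of $G$, hence divides some degree in $\cd(\PSL_4(q))$ from Table~\ref{character}. In particular, taking the constituent $\phi_i$ of smallest degree (or just using $\phi_1(1)\geq 1$), the index $|G':U|$ itself divides some character degree of $\PSL_4(q)$. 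So the first and main step is: go through each of the maximal subgroups of $\PSL_4(q)$ listed in Table~\ref{maximal-subgroups}, compute $|G':U|$ (the ``Index'' column), and check whether it can divide any degree in Table~\ref{character}. For $q\geq 13$ this is a finite arithmetic check, and the $\Phi_k$ are large and mutually almost-coprime, so most indices are ruled out immediately.

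\textbf{Eliminating subgroups.} I would organize the elimination by the ``type'' of the index. The subgroups $^\wedge(\ZZ_{q-1})^3.S_4$, $^\wedge(\SL_2(q)\times\SL_2(q)).\ZZ_{q-1}.2$, $(\PSL_2(q^2)\times\ZZ_{q+1}).2$, $\PSO_4^\pm(q).2$ all have index divisible by $q^4$, but by Table~\ref{character} every nontrivial degree of $\PSL_4(q)$ has $p$-part at most $q^3$, so these are out. The subgroups $\PSL_4(q_0).(q_0-1,4)$, $\PSU_4(q_0).(q-1,2)$, $2^4.S_6$, $2^4.A_6$, $A_7$, $\PSU_4(2)$ have index of the form $|\PSL_4(q)|/(\text{small or } |\PSL_4(q_0)|)$, which is far too large — larger than the maximal degree $\Phi_1^3\Phi_2\Phi_3$ — to divide any character degree; one can make this precise with a crude size estimate for $q\geq 13$ (and for the subfield subgroups, note the index retains a factor like $\Phi_4$ or $\Phi_3$ to a power exceeding what appears in any degree). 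This leaves exactly three survivors: $P_a = \tau(^\wedge[q^3]:\GL_3(q))$ with index $\Phi_2\Phi_4$; $\tau(^\wedge\Sp_4(q)\cdot(q-1,2))$ with index $q^2\Phi_1\Phi_3/(2,q-1)$; and $P_b = \tau(^\wedge[q^4]:(\SL_2(q)\times\SL_2(q)).\ZZ_{q-1})$ with index $\Phi_3\Phi_4$. (One should double-check that $\Phi_2\Phi_4$ and $q^2\Phi_1\Phi_3/(2,q-1)$ and $\Phi_3\Phi_4$ do each divide some degree in the table — indeed $\Phi_2\Phi_4\mid\Phi_1^2\Phi_2^2\Phi_4$ is false, but $\Phi_2\Phi_4\in\cd$ directly, $\Phi_3\Phi_4\in\cd$ directly, and $q^2\Phi_1\Phi_3\mid q^2\Phi_1^2\Phi_3$; this is what forces these three cases and no others, and must be verified carefully.)

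\textbf{The divisibility constraints.} Once we are in one of the three cases, we need the finer statement: for every $i$, $t\phi_i(1) = |U:I|\phi_i(1)$ divides some member of the listed set $\mathcal{A}_1$, $\mathcal{A}_2$, or the singleton-type bound $(2,q-1)\Phi_1$. Here I would argue: $t\phi_i(1)|G':U|$ divides some $d\in\cd(\PSL_4(q))$, so $t\phi_i(1)$ divides $d/|G':U|$ for whichever $d$ is a multiple of $|G':U|$. So I go back to Table~\ref{character}, list \emph{all} degrees $d$ that are divisible by $|G':U|$, and form the set of quotients $d/|G':U|$; then $t\phi_i(1)$ divides one of these quotients. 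For case (i), $|G':U|=\Phi_2\Phi_4$: scanning the table for multiples of $\Phi_2\Phi_4$ and dividing out gives precisely $\{q\Phi_2,\Phi_1^2\Phi_2,\Phi_1\Phi_3,q\Phi_3,q^3,\Phi_2\Phi_3\}=\mathcal{A}_1$ (up to checking the ``possible'' half-integer degrees don't contribute new coprime-to-stuff quotients — those with a factor $1/2$ have $\Phi_2\Phi_4$ still dividing the integer part only in limited ways, and one checks $q$ is odd then, so the $2$ is absorbed). Similarly $|G':U|=\Phi_3\Phi_4$ yields $\mathcal{A}_2 = \{q\Phi_1,\Phi_1^2,q\Phi_2,\Phi_2^2,\Phi_1\Phi_2,q^2\}$, and $|G':U|=q^2\Phi_1\Phi_3/(2,q-1)$ yields that $t\phi_i(1)$ divides $(2,q-1)\Phi_1$ (the only relevant multiples being $q^2\Phi_1^2\Phi_3$ and its half, and $q^2\Phi_1\Phi_3\Phi_4$-type degrees are excluded because $\Phi_4\nmid$ the needed quotient pattern — again a careful table scan).

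\textbf{Main obstacle.} The conceptual content is light; the real work is the meticulous bookkeeping in Table~\ref{character}, especially handling the fifteen-or-so ``possible'' degrees carrying factors $1/2$ or $1/4$. The danger is that one of these fractional degrees is secretly divisible by $\Phi_2\Phi_4$ or $\Phi_3\Phi_4$ and produces a quotient outside the claimed set $\mathcal{A}_j$, or that a large index I want to discard actually divides one of these. So the hard part will be verifying, for each of the three surviving indices, that the complete list of degrees divisible by it gives exactly the stated quotient set — no more — using the coprimality relations $(q,\Phi_i)=1$, $(\Phi_3,\Phi_4)=1$, $(\Phi_1,\Phi_3)\mid 3$, $(\Phi_2,\Phi_4)\mid 2$, $(\Phi_1,\Phi_2)\mid 2$ from the earlier lemmas, together with the hypothesis $q\geq 13$ to control the small-prime ambiguities.
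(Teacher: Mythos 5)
Your proposal is correct and follows essentially the same route as the paper, whose proof is literally the ``routine check on Tables~\ref{character} and~\ref{maximal-subgroups}'' that you spell out: first force $|G':U|$ to divide some degree, which leaves only the three listed subgroups, then read off the quotient sets $\mathcal{A}_1$, $(2,q-1)\Phi_1$, $\mathcal{A}_2$ from the degrees divisible by each surviving index. (One minor slip in an aside: $\Phi_2\Phi_4$ does divide $\Phi_1^2\Phi_2^2\Phi_4$, with quotient $\Phi_1^2\Phi_2\in\mathcal{A}_1$; this does not affect your argument.)
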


\begin{proof} Recall that, for every $i$, $t\phi_i|G:U|$ divides
some  degree of $\PSL_4(q)$. By a routine check on
Tables~\ref{character} and~\ref{maximal-subgroups}, we see that one
of the mentioned cases must happen.
\end{proof}

The next three lemmas can be seen directly from the list of maximal
subgroups of the corresponding groups. We leave the detailed proofs
to the reader.

\begin{lemma}\label{maxSL2} If $K$ is a maximal subgroup of ${\rm{SL}}_2(q)$ whose index  divides
 $\Phi_1,\Phi_2,$ or $q,$ then $K$ is the Borel subgroup of index $\Phi_2$. Moreover, $\Phi_2$ is the smallest index of a proper subgroup of $\rm{SL}_2(q).$
\end{lemma}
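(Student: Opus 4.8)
The plan is to prove Lemma~\ref{maxSL2} by a direct inspection of the list of maximal subgroups of $\SL_2(q)$, which descends from the list of maximal subgroups of $\PSL_2(q)$ given in Table~\ref{Tab3}. First I would recall that $\SL_2(q)$ has order $q\Phi_1\Phi_2$ and that a maximal subgroup $K$ of $\SL_2(q)$ either contains the centre $Z=Z(\SL_2(q))$ (of order $(2,q-1)$), in which case $K/Z$ is a maximal subgroup of $\PSL_2(q)$, or $K$ is a complement to $Z$ when $q$ is odd and $-1$ is a square, an essentially negligible case since such a $K$ has even larger index. So it suffices to run through Table~\ref{Tab3} and compute the indices $|\SL_2(q):K|$, comparing each against the arithmetic of $\Phi_1=q-1$, $\Phi_2=q+1$, and $q$.

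The main computation is then a short case check. The Borel subgroup has index $\Phi_2=q+1$ in both $\PSL_2(q)$ and $\SL_2(q)$; this is the candidate we want to single out. The dihedral-type subgroups $D_{q-1}$ (or $D_{2(q-1)}$) have index $\tfrac12 q\Phi_2$, and $D_{q+1}$ (or $D_{2(q+1)}$) have index $\tfrac12 q\Phi_1$; for $q\geq 13$ these indices are strictly larger than $q$, $\Phi_1$, and $\Phi_2$, since $\tfrac12 q(q\pm1)>q+1$ whenever $q\geq 4$. The subfield subgroups $\PSL_2(q_0)$ with $q=q_0^{\alpha}$, $\alpha$ prime, have index roughly $q_0^{3(\alpha-1)}(\text{stuff})$, which is far larger than $q$ or $q\pm1$ once $\alpha\geq 2$. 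Finally the three exceptional subgroups $S_4$, $A_4$, $A_5$ occur only when $q=p$ (or $q=p^2$) is bounded by a congruence condition, and have bounded order $24$, $12$, or $60$; their index $q\Phi_1\Phi_2/|K|$ grows like $q^3$, so for $q\geq 13$ it certainly exceeds $q+1$. Hence the Borel subgroup of index $\Phi_2$ is the only maximal subgroup of index dividing $q$, $\Phi_1$, or $\Phi_2$. For the ``moreover'' clause, any proper subgroup is contained in a maximal one, and every maximal subgroup has index at least $\Phi_2$ by the same enumeration (the Borel being the minimum), so $\Phi_2=q+1$ is the smallest index of a proper subgroup of $\SL_2(q)$.

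I do not expect a genuine obstacle here: the statement is a classical fact, essentially the ``$q+1$ is the minimal permutation degree of $\mathrm{PSL}_2(q)$'' result, lifted to $\SL_2(q)$. The only mild care needed is (a) handling the centre correctly when $q$ is odd, so that maximal subgroups of $\SL_2(q)$ correspond to maximal subgroups of $\PSL_2(q)$ together with possible splittings of $Z$, and (b) noting that the hypothesis $q\geq 13$ (which holds throughout this section, as stated at the start of Section~\ref{step4}) is exactly what rules out the small-$q$ coincidences, e.g. $q=7,9$ where extra subgroups or smaller indices could intrude. Since the authors explicitly say ``We leave the detailed proofs to the reader,'' the write-up can be kept to a sentence or two pointing at Table~\ref{Tab3} and the index computations above.
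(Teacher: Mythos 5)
Your proposal takes exactly the route the paper intends: the authors explicitly leave this lemma to the reader as a direct inspection of Table~\ref{Tab3}, and your index computations (Borel of index $\Phi_2$; dihedral subgroups of index $\tfrac12 q\Phi_2$ and $\tfrac12 q\Phi_1$, too large for $q\geq 13$; subfield and exceptional subgroups of index growing like a power of $q$) are the whole content. The reduction from $\SL_2(q)$ to $\PSL_2(q)$ and the ``moreover'' clause via containment in a maximal subgroup are also handled correctly.

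One sentence of your write-up is wrong, though the case it concerns is vacuous. You dismiss a hypothetical maximal subgroup $K$ not containing $Z=Z(\SL_2(q))$ on the grounds that ``such a $K$ has even larger index'': in fact, if $Z\not\leq K$ then maximality forces $KZ=\SL_2(q)$, so $|{\SL_2(q)}:K|=|Z:K\cap Z|\leq 2$ --- the index would be \emph{smaller}, not larger, and such a $K$ would destroy the ``moreover'' clause. The correct reason this case cannot occur is that an index-$2$ subgroup is normal, while $\SL_2(q)$ is perfect for $q\geq 4$; hence every maximal subgroup contains the centre and corresponds to a maximal subgroup of $\PSL_2(q)$ of the same index. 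With that one-line repair the argument is complete and matches the paper's intended proof.
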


\begin{lemma}\label{maxSL3} If $K$ is a maximal subgroup of ${\rm{SL}}_3(q)$ whose index  divides some member of the set
$\mathcal{A}_1$ consisting of
$$q\Phi_2,\Phi_1^2\Phi_2,\Phi_1\Phi_3,q\Phi_3,q^3,\Phi_2\Phi_3,$$ then $K\cong [q^2]:{\rm{GL}}_2(q)$ with index
$\Phi_3.$
\end{lemma}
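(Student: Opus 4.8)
The plan is to reduce the assertion to a crude size estimate and then read it off the classification of maximal subgroups. First I would note that, since $q\geq 13$, the group $\SL_3(q)$ is perfect, so it has no subgroup of index $2$ or $3$; hence every maximal subgroup $K$ of $\SL_3(q)$ contains the center $Z=Z(\SL_3(q))$, which is cyclic of order $(3,q-1)$, and $K\mapsto KZ/Z$ is an index-preserving bijection onto the maximal subgroups of $\PSL_3(q)$ determined by Kleidman in~\cite{Kleidman} and listed in Table~\ref{Tab1}. So it suffices to inspect that list, keeping in mind that the ${}^\wedge$-entries already record the structure of the preimage in $\SL_3(q)$.

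The main observation is that a divisor of a member of $\mathcal{A}_1$ is small: one checks that each of $q\Phi_2,\Phi_1^2\Phi_2,\Phi_1\Phi_3,q\Phi_3$ and $q^3$ is at most $\Phi_2\Phi_3=(q+1)(q^2+q+1)<(q+1)^3$, so if $[\SL_3(q):K]$ divides some member of $\mathcal{A}_1$ then $[\SL_3(q):K]<(q+1)^3$. On the other hand, the two classes of maximal parabolic subgroups of $\PSL_3(q)$ — the point and line stabilizers on the projective plane over $\FF_q$ — have preimage ${}^\wedge[q^2]:\GL_2(q)$ of index $\Phi_3=q^2+q+1$ in $\SL_3(q)$, and $\Phi_3$ divides $q\Phi_3$, $\Phi_1\Phi_3$ and $\Phi_2\Phi_3$; so these subgroups do satisfy the hypothesis and yield the conclusion. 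It therefore remains to show that every remaining maximal subgroup in Table~\ref{Tab1} has index at least $(q+1)^3$ when $q\geq 13$.

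I would do this case by case using the subgroup orders in~\cite{Kleidman}. The imprimitive subgroups ${}^\wedge(\ZZ_{q-1})^2.S_3$ and ${}^\wedge\ZZ_{q^2+q+1}.3$ have indices $q^3\Phi_2\Phi_3/6$ and $q^3\Phi_1^2\Phi_2/3$ in $\SL_3(q)$, which are of order at least $q^6$; the subfield subgroups $\SL_3(q_0).c$ with $q=q_0^b$ ($b$ prime) and the subfield unitary subgroup with $q=q_0^2$ have index at least $|\SL_3(q)|/\bigl(3b\,|\SL_3(q_0)|\bigr)$, which is of order $q^4$ or larger since $q_0\leq\sqrt q$; the orthogonal subgroup ${}^\wedge\mathrm{SO}_3(q)$ (only for $q$ odd) has index of order $q^2\Phi_1\Phi_3\approx q^5$; and $3^2.\SL_2(3)$, $3^2.Q_8$, $A_6$ and $\PSL_2(7)$ occur only when $q=p$ is prime and have bounded order, so their index is of order $q^8$. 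In every case the index exceeds $(q+1)^3$ for $q\geq 13$, forcing $K$ to be a maximal parabolic, which completes the proof. The only labor here is bookkeeping — keeping track of the $(3,q-1)$, $(q_0-1,3)$ and $b$ factors, and observing that those subgroups which exist only for prime $q$ cannot thereby have small index — and no real obstacle arises.
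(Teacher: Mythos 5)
Your proof is correct and takes essentially the approach the paper intends: the authors omit the argument entirely, saying the lemma ``can be seen directly from the list of maximal subgroups,'' and your write-up is exactly that inspection of Kleidman's list (Table~\ref{Tab1}), made efficient by the observation that every member of $\mathcal{A}_1$ is bounded by $\Phi_2\Phi_3<(q+1)^3$ while every non-parabolic maximal subgroup has index exceeding this. The preliminary reduction from $\SL_3(q)$ to $\PSL_3(q)$ via perfectness is also sound.
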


\begin{lemma}\label{maxSp4} The smallest index of a maximal
subgroup of $\mathrm{Sp}_4(q)$ is $\Phi_4,$ which is greater than
$2\Phi_1$.
\end{lemma}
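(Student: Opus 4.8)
The plan is to read the statement off the classification of the maximal subgroups of $\mathrm{Sp}_4(q)$ (equivalently of $\mathrm{PSp}_4(q)$) due to Kleidman and Liebeck~\cite{Kleidman,Kleidman-Liebeck}, the same source used to compile Table~\ref{Tab2}. First I would list the possible maximal subgroups: the two parabolics, namely the stabiliser of an isotropic $1$-space and the stabiliser of a totally isotropic $2$-space; the stabilisers of geometric decompositions of the natural module ($\mathrm{Sp}_2(q)\wr 2$, $\mathrm{GL}_2(q).2$, $\mathrm{GU}_2(q).2$ and $\mathrm{Sp}_2(q^2).2$); the subfield subgroups $\mathrm{Sp}_4(q_0).b$ with $q=q_0^b$ and $b$ prime; the orthogonal subgroups $\mathrm{O}_4^{\pm}(q)$ occurring when $q$ is even; and the finitely many small almost simple maximal subgroups ($\mathrm{Sz}(q)$, $\mathrm{PSL}_2(q)$, $2^4.\Omega_4^{-}(2)$, $A_6$, $S_6$). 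For each I would write down its index in $\mathrm{Sp}_4(q)$ using $|\mathrm{Sp}_4(q)|=q^4(q^2-1)(q^4-1)$ together with the known order of the subgroup.

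Next I would compare these indices. Each non-parabolic geometric maximal subgroup has index divisible by $q^2$ (for instance $\tfrac12 q^2\Phi_4$ for $\mathrm{Sp}_2(q)\wr 2$ and $\tfrac12 q^2(q^2-1)$ for $\mathrm{Sp}_2(q^2).2$, and divisible by $q^3$ for the $\mathrm{GL}$- and $\mathrm{GU}$-type subgroups), and hence has index exceeding that of a parabolic; the subfield and almost simple subgroups have much larger index again; and none of the sporadic-type entries of the Kleidman--Liebeck list occurs because of the standing hypothesis $q\geq 13$. Thus the smallest index of a maximal subgroup is the one coming from a parabolic, which is $\Phi_4$, giving the first assertion. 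For the second, one simply notes $\Phi_4-2\Phi_1=(q^2+1)-2(q-1)=(q-1)^2+2>0$, so $\Phi_4>2\Phi_1$; in particular no proper subgroup of $\mathrm{Sp}_4(q)$ has index at most $2\Phi_1$, which is the form in which the lemma is used.

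I do not expect a genuine obstacle here: the only care needed is to confirm that the exceptional entries in the classification --- the orthogonal and Suzuki subgroups for $q$ even, the sporadic subgroups that arise only for small primes, and the coincidences around $\mathrm{Sp}_4(2)\cong S_6$ and $\mathrm{Sp}_4(3)$ --- are all either irrelevant for $q\geq 13$ or still have index exceeding $\Phi_4$. As the authors indicate, once the list of maximal subgroups is available this reduces to a routine verification, and the comparison with $2\Phi_1$ is entirely elementary.
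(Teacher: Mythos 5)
Your overall approach---run through Kleidman's list of maximal subgroups (Table~\ref{Tab2}), compute each index from $|\mathrm{Sp}_4(q)|=q^4(q^2-1)(q^4-1)$, and take the minimum---is exactly what the paper intends; the authors declare the verification routine and leave it to the reader. Your handling of the non-parabolic cases is also fine: for $q\geq 13$ every geometric, subfield, or almost simple maximal subgroup has index at least of order $q^4/2$, so the minimum is attained at a parabolic.

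There is, however, a concrete arithmetic error in your final step, inherited from the lemma statement rather than caught. The stabiliser of an isotropic $1$-space has index equal to the number of such spaces, namely $(q^4-1)/(q-1)=(q+1)(q^2+1)=\Phi_2\Phi_4$, and the stabiliser of a totally isotropic $2$-space has the same index; there is no subgroup of $\mathrm{Sp}_4(q)$ of index $\Phi_4=q^2+1$ at all (such a subgroup would have order $q^4(q^2-1)^2$, and nothing on the list has that order). So the smallest index of a maximal subgroup is $\Phi_2\Phi_4$, not $\Phi_4$, and the first assertion of the lemma is false as written. Fortunately this does not affect the one place the lemma is used, in section~\ref{step4}: all that is needed there is that every proper subgroup of $\mathrm{Sp}_4(q)$ (equivalently of $\mathrm{S}_4(q)$) has index exceeding $2\Phi_1=2(q-1)$, and $\Phi_2\Phi_4>\Phi_4>2\Phi_1$ certainly holds. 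You should prove the corrected bound $\Phi_2\Phi_4$ (or just the inequality actually used) rather than reproduce the misstatement.
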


\begin{lemma}\label{lem1} Let $N\unlhd A$ be such that $A/N\cong
\hat{S},$ where $\hat{S}/Z(\hat{S})\cong \PSL_2(q)$ and let
$\lambda\in\Irr(N).$ If $\chi(1)\leq \Phi_2$ for any
$\chi\in\Irr(A|\lambda),$ then $\lambda$ is $A$-invariant.
\end{lemma}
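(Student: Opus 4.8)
The plan is to use Clifford theory relative to the normal subgroup $N\unlhd A$ and exploit the very restrictive bound $\chi(1)\le\Phi_2$ for all constituents lying over $\lambda$. Suppose for contradiction that $\lambda$ is not $A$-invariant, and set $T=I_A(\lambda)$, so $N\le T<A$. Since $A/N\cong\hat S$ with $\hat S/Z(\hat S)\cong\PSL_2(q)$, the image $T/N$ is a proper subgroup of $\hat S$. First I would observe that by Clifford correspondence every $\psi\in\Irr(T\mid\lambda)$ induces irreducibly to $A$, so $\psi(1)|A:T|\in\cd(A\mid\lambda)$; in particular $|A:T|\le\Phi_2$. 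Now $|A:T|=|\hat S:(T/N)|$ is the index in $\hat S$ of a proper subgroup, and since $Z(\hat S)\le T/N$ this is the index of a proper subgroup of $\PSL_2(q)$. By Lemma~\ref{maxSL2} (applied through the quotient to $\PSL_2(q)$, whose proper subgroups have index at least $\Phi_2$, attained only at the Borel), we conclude $|A:T|=\Phi_2$ and $T/N$ is the full preimage of a Borel subgroup $B$ of $\PSL_2(q)$.

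Next I would analyse the structure of $T/N$. A Borel subgroup $B$ of $\PSL_2(q)$ is a Frobenius group of order $q\Phi_1/(2,q-1)$ with elementary abelian kernel of order $q$ and cyclic complement of order $\Phi_1/(2,q-1)$; pulling back through $Z(\hat S)$ we get that $T/N$ has a normal subgroup $Q/N$ of order $q|Z(\hat S)|$ (with $Q/N$ the preimage of the unipotent radical) and $T/Q$ cyclic of order $\Phi_1/(2,q-1)$. The key point is that $\lambda$, being $T$-invariant, has a constituent-structure over the chain $N\unlhd Q\unlhd T$ that is forced to produce a large character degree. Concretely, let $\mu\in\Irr(Q\mid\lambda)$; then some $\psi\in\Irr(T\mid\mu)$ lies over $\lambda$, and by the bound $\psi(1)\le\Phi_2$ together with $\psi(1)|A:T|=\psi(1)\Phi_2\le\Phi_2$ we are actually forced to $\psi(1)=1$ for every such $\psi$, i.e.\ every irreducible character of $T$ lying over $\lambda$ is linear. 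Hence $T'\le\ker\psi$ for all such $\psi$, which means $\lambda$ extends to a linear character of $T$ and $T/N$ acts with all-linear constituents over $\lambda$; but a Frobenius group like $B$ (of order divisible by $q\ge13$) has a nonlinear irreducible character, and using Gallagher's Lemma~\ref{Gallagher} on an extension $\hat\lambda$ of $\lambda$ to $T$, multiplying by a nonlinear character of $T/N$ pulled back from $B$ gives a constituent of $\lambda^A$ of degree $\ge\Phi_2\cdot\Phi_2$, contradicting $\chi(1)\le\Phi_2$.

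Let me streamline the endgame: once we know $|A:T|=\Phi_2$, pick any $\psi\in\Irr(T\mid\lambda)$; then $\psi(1)\Phi_2=\psi^A(1)\in\cd(A)$ with $\psi^A(1)\le\Phi_2$, forcing $\psi(1)=1$, so all of $\Irr(T\mid\lambda)$ is linear. By Ito-type reasoning this means $\lambda$ is extendible to $T$ and the characters of $T$ over $\lambda$ are exactly $\hat\lambda\nu$ with $\nu\in\Irr(T/N)$, $\nu$ linear — but $T/N$ (a central extension of a Borel of $\PSL_2(q)$) is visibly non-abelian for $q\ge13$, so it has a nonlinear irreducible $\nu_0$, and then $\hat\lambda\nu_0\in\Irr(T\mid\lambda)$ is nonlinear, the desired contradiction. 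Therefore $\lambda$ is $A$-invariant.

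The main obstacle I anticipate is the bookkeeping with the central extension $Z(\hat S)$: one must check that passing to $T/N$ really lands inside the preimage of a genuine Borel of $\PSL_2(q)$ and that this preimage is non-abelian (equivalently, that the unipotent radical is not pushed into the centre), which uses $q\ge13$ so that $\hat S$ is a quasisimple group with the standard Schur multiplier and $\PSL_2(q)$ is simple with its usual Borel structure. A secondary subtlety is making sure the inequality $\psi(1)|A:T|\le\Phi_2$ is applied to \emph{every} constituent over $\lambda$, not just one — but that is immediate from the hypothesis, since every element of $\cd(A\mid\lambda)$ is of the form $\psi^A(1)$ for such a $\psi$. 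Everything else is a routine application of Clifford theory, Gallagher's lemma, and Lemma~\ref{maxSL2}.
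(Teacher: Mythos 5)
Your proof is correct and follows essentially the same route as the paper's: bound $|A:I_A(\lambda)|$ by $\Phi_2$ via Clifford induction, identify the inertia group with the Borel subgroup of index $\Phi_2$ using Lemma~\ref{maxSL2}, force every constituent of $\lambda^{I_A(\lambda)}$ to be linear, and conclude via Gallagher's lemma that the Borel would have to be abelian, contradicting $q\geq 13$. The detour through the Frobenius structure of the Borel in your middle paragraph (and the unjustified aside that $Z(\hat S)\leq T/N$) is unnecessary, as your own streamlined endgame shows.
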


\begin{proof} By way of contradiction, assume that $\lambda$ is not
$A$-invariant. Let $J=I_A(\lambda)$ and $K\leq A$ such that $J\leq
K\lneq A,$ where $K/N$ is a maximal subgroup of $A/N\cong \hat{S}.$
Let $\delta\in\Irr(J|\lambda)$ and $\mu=\delta^A\in\Irr(A|\lambda).$
Then $$\mu(1)=|A:K||K:J|\delta(1).$$ As $|A:K|\geq \Phi_2$ by
Lemma~\ref{maxSL2} and that $\mu(1)\leq \Phi_2,$ we deduce that
$|K:J|\delta(1)=1$. This implies that $K/N=J/N$ is isomorphic to the
Borel subgroup of $\hat{S}$ of index $\Phi_2$ and that all the
irreducible constituents of $\lambda^J$ are linear. By applying
Gallagher's~Lemma, we deduce that the Borel subgroup $J/N$ is
abelian, which is impossible as $q\geq 13.$
\end{proof}

\begin{lemma}\label{Pb} Let $N\unlhd A$ be such that $A/N\cong \SL_2(q)\circ
\SL_2(q)$ and let $\lambda\in\Irr(N).$ If $\chi(1)\leq \Phi_2^2$ for
any $\chi\in\Irr(A|\lambda),$ then $\lambda$ is $A$-invariant.
\end{lemma}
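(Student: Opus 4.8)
The plan is to mimic the argument in Lemma~\ref{lem1}, but now with two commuting $\SL_2(q)$ factors instead of one. First I would suppose for contradiction that $\lambda$ is not $A$-invariant, set $J=I_A(\lambda)$, and choose a subgroup $K$ with $J\le K\lneq A$ such that $K/N$ is maximal in $A/N\cong\SL_2(q)\circ\SL_2(q)$. Let $D=(\SL_2(q)\circ\SL_2(q))'$; since each factor is perfect and $q\ge 13$, we have $D=\SL_2(q)\circ\SL_2(q)$, so $A/N$ is its own derived group. Picking $\delta\in\Irr(J\mid\lambda)$ and setting $\mu=\delta^A\in\Irr(A\mid\lambda)$, we get $\mu(1)=|A:K|\,|K:J|\,\delta(1)\le\Phi_2^2$.

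The key step is to bound the index of a maximal subgroup of $\SL_2(q)\circ\SL_2(q)$ from below. Because $\Phi_2$ is the smallest index of a proper subgroup of $\SL_2(q)$ (Lemma~\ref{maxSL2}), and a maximal subgroup of the central product $\SL_2(q)\circ\SL_2(q)$ either contains one factor and restricts to a maximal subgroup of the other (giving index $\ge\Phi_2$), or projects onto a ``diagonal-type'' maximal subgroup, I would argue that $|A:K|\ge\Phi_2$ in every case, with equality forcing $K/N$ to correspond to $\SL_2(q)\circ B$ where $B$ is a Borel subgroup of the other $\SL_2(q)$. From $|A:K|\,|K:J|\,\delta(1)\le\Phi_2^2$ and $|A:K|\ge\Phi_2$ we obtain $|K:J|\,\delta(1)\le\Phi_2$. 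Then I would examine $K/N$: it has the form (up to the central product identification) $\SL_2(q)\times B$ or a subdirect variant, with the quotient by the relevant normal subgroup still involving a copy of $\SL_2(q)$ or a large Borel subgroup. Applying Lemma~\ref{lem1} (or re-running its argument) to the normal series inside $K$ with quotient $\hat S$ satisfying $\hat S/Z(\hat S)\cong\PSL_2(q)$ would show $\lambda$ is already $K$-invariant, i.e.\ $J=K$, so $|K:J|=1$ and all constituents of $\lambda^J$ are linear of degree dividing $\Phi_2$; but then the Borel-type factor of $K/N$ must be abelian, contradicting $q\ge 13$.

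Concretely, I would organize it as: $(1)$ reduce to $|A:K|=\Phi_2$ and $K/N\cong \SL_2(q)\circ B$ with $B$ a Borel of $\SL_2(q)$; $(2)$ since $\SL_2(q)$ is a quasi-simple normal section of $K/N$ with central quotient $\PSL_2(q)$, apply Lemma~\ref{lem1} to the subgroup $K$ (with $N\unlhd K$, $K/N$ having $\SL_2(q)$ as a normal subgroup up to the central product) to deduce that $\lambda$ extends along that $\SL_2(q)$-direction, forcing $J$ to contain a full $\SL_2(q)$ factor; $(3)$ conclude $J=K$, so $\lambda^K$ has only linear constituents and hence the Borel group $B$ (which equals $K/J$ modulo the $\SL_2(q)$-factor) is abelian — impossible for $q\ge 13$.

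The main obstacle will be step $(1)$: correctly classifying the maximal subgroups of the central product $\SL_2(q)\circ\SL_2(q)$ and verifying that none of them has index strictly less than $\Phi_2$, nor index equal to $\Phi_2$ in a way that avoids a Borel-type factor. The central product structure means maximal subgroups need not be products of maximal subgroups of the factors, so one must check the ``diagonal'' maximal subgroups (those projecting onto $\SL_2(q)$ diagonally, which have index of order roughly $q^3$ and hence far larger than $\Phi_2^2$) and the subgroups containing the common center. Once the index bound is pinned down, the rest follows the template of Lemma~\ref{lem1} essentially verbatim, reducing to the non-abelianness of a Borel subgroup of $\SL_2(q)$ for $q\ge 13$.
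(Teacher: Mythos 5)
Your overall strategy---pass to $J=I_G(\lambda)$, bound the index of a maximal overgroup $K$ from below by $\Phi_2$, and then feed the leftover bound $|K:J|\delta(1)\le\Phi_2$ into Lemma~\ref{lem1}---has a genuine gap at exactly the point where you invoke Lemma~\ref{lem1}. That lemma requires $\chi(1)\le\Phi_2$ for \emph{every} $\chi\in\Irr(V|\lambda)$, where $V/N\cong\SL_2(q)$ is one factor; but the hypothesis of the present lemma only bounds the degrees of constituents over the whole group $A$ by $\Phi_2^2$, and this bound does not descend to $\Phi_2$ on a factor. The implicit assumption in your sketch is that inducing from $V$ (or from $K$) up to $A$ always multiplies degrees by at least $\Phi_2$; that is true when the intermediate character is \emph{not} invariant further up, but fails precisely when it is invariant. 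Concretely, a constituent $\mu\in\Irr(V|\lambda)$ can have degree as large as $\Phi_2^2$ while still lying under a constituent of $\lambda^A$ of degree $\le\Phi_2^2$, namely when $\mu$ is $A$-invariant and extends (or ramifies mildly) to $A$. Your proposal never confronts this invariant case, and it is the heart of the matter.

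The paper's proof is organized around exactly that case. It fixes one factor $V=A_1$ on which $\lambda$ fails to be invariant, forms $\mu=\delta^V$ with $\mu(1)\ge|K:I|\Phi_2\delta(1)$, and then splits on whether $\mu$ is $A$-invariant. If not, the degree picks up another factor $\Phi_2$ and one argues as in Lemma~\ref{lem1}. If $\mu$ \emph{is} $A$-invariant, the multiplicities $f_i$ in $\mu^A=\sum f_i\chi_i$ are degrees of projective irreducible representations of $A/V\cong\PSL_2(q)$ by Lemma~\ref{lemmaHuppert2}; the cases $f_i=1$ (Gallagher), $f_i=q+1$, and $f_i=q-1$ are eliminated one by one, and the residual case $f_i=(q-\epsilon)/2$ for all $i$ is killed by the count $\sum f_i^2=|\PSL_2(q)|$, which forces $q-\epsilon\mid 4$. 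None of this machinery appears in your outline, and without it the argument does not close. (A secondary, more repairable issue: your step $(1)$ does not actually force $|A:K|=\Phi_2$ with $K/N\cong\SL_2(q)\circ B$; a maximal subgroup of the form $S_1\cdot D$ with $D$ dihedral has index $q(q\pm1)/2\le\Phi_2^2$ and also survives the first inequality.)
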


\begin{proof} Let $A_1$ and $A_2$ be normal subgroups of $A$ such
that $A_i/N\cong \SL_2(q)$ for $i=1,2.$ By way of contradiction,
assume that $\lambda$ is not $A$-invariant. As $A=A_1A_2,$ $\lambda$
is not $A_i$-invariant for some $i.$ Without loss, we assume that
$\lambda$ is not $A_1$-invariant. Let $V=A_1.$ Then $\lambda$ is not
$V$-invariant and $A/V\cong \PSL_2(q).$ Let $I=I_V(\lambda)$ and
$K\leq A$ such that $I\leq K\lneq V,$ where $K/N$ is a maximal
subgroup of $V/N\cong \SL_2(q).$ Let $\delta\in\Irr(I|\lambda)$ be
such that $\delta(1)$ is maximal and let
$\mu=\delta^V\in\Irr(V|\lambda).$ Then
$$\mu(1)=|V:K||K:I|\delta(1).$$ With the assumption on $q,$ we obtain
that $|V:K|\geq \Phi_2$ by Lemma~\ref{maxSL2} and thus $$\mu(1)\geq
|K:I|\Phi_2\delta(1).$$

If $\mu$ is not $A$-invariant, then with the same argument as above,
we deduce that $$\varphi(1)\geq \Phi_2\mu(1)\geq
|K:I|\Phi_2^2\delta(1)$$ for any irreducible constituent $\varphi$
of $\mu^A.$ As $\varphi\in\Irr(A|\lambda),$ we have that
$\varphi(1)\leq \Phi_2^2$ and thus $|K:I|\delta(1)=1.$ It follows
that $K/N=I/N$ is isomorphic to the Borel subgroup of $\SL_2(q)$ of
index $\Phi_2$ and every irreducible constituent of $\lambda^I$ is
linear. By applying Gallagher's Lemma, we deduce that $I/N$ is
abelian which is impossible as $q\geq 13.$

Therefore we conclude that $\mu$ is $A$-invariant. Write
$$\mu^A=\sum_{i=1}^k f_i\chi_i, \text{ where } \chi_i\in\Irr(A|\mu).$$ If
$f_j=1$ for some $j,$ then $\mu$ extends to $\mu_0\in\Irr(A)$ and so
by applying Gallagher's Lemma, $\mu^A$ has an irreducible
constituent of degree $\Phi_2\mu(1)$ since $\Phi_2\in\cd(A/V)$.
Using the same argument as in the previous case, we obtain a
contradiction. Hence we have that all $f_i>1$ and so $f_i's$ are
degrees of nontrivial proper projective irreducible representations
of $A/V\cong \PSL_2(q)$ by Lemma~\ref{lemmaHuppert2}.

If $f_j=q+1$
for some $j,$ then $\chi_j(1)=f_j\mu(1)=\Phi_2\mu(1),$ which leads
to a contradiction as above.

Assume next that $f_j=q-1$ for some $j.$ If $|K:I|\delta(1)\geq 2,$
then as $q\geq 13,$ we obtain
\[\chi_j(1)=(q-1)\mu(1)\geq (q^2-1)|K:I|\delta(1)\geq
2(q^2-1)>(q+1)^2=\Phi_2^2,\] which violates the hypothesis. Thus
$|K:I|\delta(1)=1.$ It follows that $K=I$ and $\delta(1)=1.$ By the
choice of $\delta,$ we deduce that all irreducible constituents of
$\lambda^I$ are linear and so $I/N=K/N$ is an abelian maximal
subgroup of $V/N\cong \SL_2(q).$ It follows that $\PSL_2(q)$ has an
abelian maximal subgroup, which is impossible since $q\geq 13.$

Thus for all $i,$ we have that $f_i>1$ and $f_i\neq q\pm 1.$ By
inspecting the character degrees of $\SL_2(q)$, we conclude that $q$
is odd and $f_i=(q-\epsilon)/2$ for all $i,$ where $q\equiv
\epsilon$ (mod $4$) and $\epsilon=\pm 1.$
Note that when $q\geq 13$ is odd, the characters of degree $q$ or $(q+\epsilon)/2$ of
$\SL_2(q)$ are unfaithful so that these degrees are not the degrees
of the proper projective irreducible representations of $\PSL_2(q).$
Now
\[\frac{1}{2}q(q^2-1)=|\PSL_2(q)|=|A/V|=\sum_{i=1}^k f_i^2=k(\frac{q-\epsilon}{2})^2.\] We obtain that
\[k(q-\epsilon)=2q(q+\epsilon).\] Hence $q-\epsilon\mid
2(q+\epsilon)=2(q-\epsilon)+4\epsilon$ and thus $q-\epsilon$ divides
$4,$ which is impossible as $q-\epsilon\geq q-1\geq 12.$
\end{proof}

\begin{lemma}\label{Pa} Let $N\unlhd A$ be such that $A/N\cong \SL_3(q)$ and let $\lambda\in\Irr(N).$ If
$\chi(1)$ divides some number in
$$\mathcal{A}_1=\{q\Phi_2,\Phi_1^2\Phi_2,\Phi_1\Phi_3,q\Phi_3,q^3,\Phi_2\Phi_3\}$$
for any $\chi\in\Irr(A|\lambda),$ then $\lambda$ is $A$-invariant.
\end{lemma}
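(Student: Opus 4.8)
\textbf{Proof plan for Lemma~\ref{Pa}.} The strategy mirrors the proofs of Lemmas~\ref{lem1} and~\ref{Pb}: argue by contradiction and use the fact that any irreducible constituent $\mu$ of $\lambda^V$ (for a suitable subnormal chain) induces to $A$ with a large index coming from a maximal subgroup, forcing that maximal subgroup to be small. Assume $\lambda$ is not $A$-invariant, set $J=I_A(\lambda)$, and pick a maximal subgroup $K$ of $A$ with $N\le J\le K\lneq A$ and $K/N$ maximal in $A/N\cong\SL_3(q)$. For $\delta\in\Irr(J\mid\lambda)$ of maximal degree and $\mu=\delta^A\in\Irr(A\mid\lambda)$ we get $\mu(1)=|A:K||K:J|\delta(1)$, and since $\mu(1)$ must divide a member of $\mathcal{A}_1$, the index $|A:K|$ must divide a member of $\mathcal{A}_1$ as well. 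By Lemma~\ref{maxSL3}, the only possibility is $K/N\cong[q^2]\!:\!\GL_2(q)$ with $|A:K|=\Phi_3$; hence $|K:J|\delta(1)$ divides some element of $\{q\Phi_2,\Phi_1^2\Phi_2,q,\Phi_2,\Phi_1\Phi_3/\Phi_3,\dots\}$ — more precisely, dividing $\mathcal{A}_1/\Phi_3$, i.e.\ $|K:J|\delta(1)\mid$ one of $q\Phi_2/(\Phi_3,q\Phi_2)$, etc.; the upshot is that $|K:J|\delta(1)$ is at most $q\Phi_2$ and is coprime-constrained.

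Next I would analyse the structure of $K/N$, which is a parabolic $[q^2]\!:\!\GL_2(q)$ of $\SL_3(q)$. Let $W/N\cong[q^2]$ be its unipotent radical, so $K/W\cong\GL_2(q)$ (up to the relevant central identification). The plan is to push $\lambda$ up through $W$ first. If $\lambda$ is not $W$-invariant, then the orbit of $\lambda$ under $W/N$ has size a nontrivial power of $p$ dividing $q^2$, and inducing a constituent of $\lambda^{I_W(\lambda)}$ up to $W$ and then to $K$ and $A$ produces a degree divisible by $\Phi_3$ times a $p$-power times the degree growth inside $\GL_2(q)$; comparing with $\mathcal{A}_1$ (whose members have $p$-part at most $q^3$ but are all products with at most one factor of $q$ after removing $\Phi_3$) pins down the $p$-part and forces $\lambda$ to be $W$-invariant. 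Then $K/W\cong\GL_2(q)$ acts on $\Irr(W\mid\lambda_{\text{ext}})$, and I would invoke an $\SL_2(q)$-type argument (as in Lemma~\ref{lem1}, using Lemma~\ref{maxSL2} and the fact that the Borel subgroup of $\SL_2(q)$ is nonabelian for $q\ge13$) to conclude that the relevant character is invariant under $K/W$, hence under $K$; combined with $W$-invariance this gives $K$-invariance of $\lambda$, so $J=K$, and then $\mu(1)=\Phi_3\delta(1)$ with $\delta$ running over $\Irr(K\mid\lambda)$. Finally, since $\Phi_3$ times any nontrivial proper projective degree of $\SL_3(q)$ quotient (or any honest degree of $K/N=[q^2]\!:\!\GL_2(q)$ bigger than those already accounted for) exceeds or fails to divide the members of $\mathcal{A}_1$ — using Lemma~\ref{lemmaHuppert2} to reduce to projective representations of $A/K$-quotients — we reach a contradiction unless $\lambda$ was $A$-invariant all along.

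The main obstacle, as in Lemma~\ref{Pb}, will be the endgame: after establishing that $\mu$ (the induced-up constituent) is $A$-invariant, one must rule out the scenario where all the multiplicities $f_i$ in $\mu^A=\sum f_i\chi_i$ are forced to be "small" projective degrees of $\SL_3(q)$ (or of the $\GL_2(q)/\SL_2(q)$ layer), and derive a numerical contradiction from $|A/K|=\Phi_3=q^2+q+1$ together with $\sum f_i^2 = |A/K|$ or the analogous order identity. Here the relevant projective degrees of $\SL_3(q)$ — roughly $\Phi_2$, $q\Phi_2$, $\Phi_1\Phi_2$, $\Phi_3$, $q^2$, and their halves/thirds when $(q-1,3)=3$ — each have to be checked against divisibility of $\mathcal{A}_1$ after multiplication by $\Phi_3$; since every member of $\mathcal{A}_1$ has the property that dividing out $\Phi_3$ (when $\Phi_3$ divides it at all, which happens only for $\Phi_1\Phi_3$, $q\Phi_3$, $\Phi_2\Phi_3$) leaves $\Phi_1$, $q$, or $\Phi_2$ respectively, the surviving possibilities are extremely restricted and each leads to the Borel subgroup being abelian or to an inequality like $q-1\mid 4$ or $q^2+q+1\mid\text{(small number)}$, contradicting $q\ge13$. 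I expect the bookkeeping of which projective degrees of $\SL_3(q)$ are faithful (hence genuine projective degrees of $\PSL_3(q)$) versus unfaithful to be the delicate point, exactly as the faithfulness discussion was in Lemma~\ref{Pb}.
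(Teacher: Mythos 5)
Your opening matches the paper: from $\mu(1)=|A:K|\,|K:J|\,\delta(1)$ dividing a member of $\mathcal{A}_1$ together with Lemma~\ref{maxSL3}, you correctly force $K/N\cong [q^2]:\GL_2(q)$ with $|A:K|=\Phi_3$. Note, however, that the correct conclusion at that point is sharper than your ``at most $q\Phi_2$'': since the only members of $\mathcal{A}_1$ divisible by $\Phi_3$ are $\Phi_1\Phi_3$, $q\Phi_3$ and $\Phi_2\Phi_3$, one gets that $|K:J|\delta(1)$ divides $\Phi_1$, $q$ or $\Phi_2$, and the later steps need this. Beyond this point the proposal has genuine gaps. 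First, your claim that non-invariance of $\lambda$ under the unipotent radical $[q^2]$ is excluded by ``$p$-part'' considerations is not substantiated: the degree constraint only forces the orbit of $\lambda$ under $[q^2]$ to have size a $p$-power dividing $q$, which is entirely consistent with a nontrivial orbit, and no contradiction is actually derived. Second, your endgame is modelled on the multiplicity count of Lemma~\ref{Pb}, i.e.\ an identity of the shape $\sum_i f_i^2=|A/K|$; but that identity in Lemma~\ref{Pb} rests on the relevant subgroup being \emph{normal} with a known simple quotient, whereas here $K$ is a maximal, non-normal subgroup of $A$, so there is no such identity and the counting argument does not transfer.

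The paper's proof avoids both problems by a different device, which is the idea missing from your plan. It proves invariance of $\lambda$ only under the subgroup $V\le K$ with $V/N\cong\SL_2(q)$ (this follows from Lemma~\ref{lem1}, since every $\chi\in\Irr(V|\lambda)$ has $\chi(1)\le\Phi_2$), extends $\lambda$ to $\lambda_0\in\Irr(V)$ using the triviality of the Schur multiplier of $\SL_2(q)$ for $q\ge 13$, and sets $\mu=\tau\lambda_0$ with $\tau(1)=\Phi_2$. Then for $W=LV$ (with $L/N\cong[q^2]$), every $\varphi\in\Irr(W|\mu)$ has degree at most $\Phi_2$, which forces $\lambda(1)=1$ and $\varphi_V=\mu$; by Isaacs' Lemma~12.17 the vanishing-off subgroup $\mathbf{V}(\mu)$ is normal in $W$, hence proper in $V$ and containing $N$, so $\mathbf{V}(\mu)/N$ is central in $V/N$ and $q(q^2-1)/(2,q-1)$ divides $|V:\mathbf{V}(\mu)|$. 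On the other hand Isaacs' Lemma~2.29 gives $|V:\mathbf{V}(\mu)|\le\mu(1)^2=\Phi_2^2$, a contradiction for $q\ge 13$. As written, your proposal does not contain a workable substitute for this step.
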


\begin{proof}
By way of contradiction, assume that $\lambda$ is not $A$-invariant
and let $J=I_A(\lambda).$ Write
$$\lambda^J=\delta_1+\delta_2+\cdots, \text{ where }
\delta_i\in\Irr(J|\lambda).$$ We have that for every $i,$
$\delta_i^A\in\Irr(A|\lambda)$ and $\delta_i^A(1)=|A:J|\delta_i(1)$
divides some number in $\mathcal{A}_1.$ Let $K$ be a subgroup of $A$
such that $J\leq K$ and that $K/N$ is maximal in $A/N.$ It follows
that the index $|A:K|$ divides some number in $\mathcal{A}_1.$ By
Lemma~\ref{maxSL3}, we deduce that $K/N\cong [q^2]:\textrm{GL}_2(q)$
and $|A:K|=\Phi_3$ so that $$|K:J|\delta_i(1) \text{ divides }
\Phi_1,\Phi_2 \text{ or } q.$$ Let $L$ and $V$ be subgroups of $K$
such that $V/N\cong \SL_2(q)$ and $L/N\cong [q^2].$ Also let
$W=LV\unlhd K.$ Observe that if $\chi\in\Irr(W|\lambda),$ then
$\chi(1)$ must divide $\Phi_1,\Phi_2$ or $q.$ In particular, if
$\chi\in\Irr(V|\lambda),$ then $\chi(1)\leq \Phi_2$ by using
Frobenius reciprocity. By Lemma \ref{lem1}, we obtain that $\lambda$
is $V$-invariant. As the Schur multiplier of $V/N\cong \SL_2(q)$
with $q\geq 13$ is trivial, we deduce from
\cite[Theorem~11.7]{Isaacs} that $\lambda$ extends to
$\lambda_0\in\Irr(V).$ By Gallagher's Lemma,
$\tau\lambda_0\in\Irr(V|\lambda)$ for any $\tau\in\Irr(V/N).$ Choose
$\tau\in\Irr(V/N)$ with $\tau(1)=\Phi_2$ and let
$\mu=\tau\lambda_0.$ Then
$\mu(1)=\tau(1)\lambda(1)=\Phi_2\lambda(1).$

Observe that if $\varphi\in\Irr(W|\mu),$ then $\mu(1)\leq
\varphi(1)\leq \Phi_2$ so that $\lambda(1)=1$ and $\varphi_{V}=\mu$
and thus by \cite[Lemma~12.17]{Isaacs}, we have that
$\textbf{V}(\mu)\unlhd W,$ where $\textbf{V}(\mu)\unlhd V$ is the
vanishing-off subgroup of $\mu.$ As $V$ is non-normal in $W,$ we
deduce that $\textbf{V}(\mu)\lneq V$ and thus as
$\mu_N=\Phi_2\lambda,$ $\mu$ is nonzero on $N,$ and so $N\leq
\textbf{V}(\mu)\lneq V.$ Hence $\textbf{V}(\mu)/N$ lies inside the
center of $V/N.$ In particular,
\begin{equation}\label{equation4}|V:\textbf{V}(\mu)| \text{ is divisible by }
q(q^2-1)/(2,q-1).\end{equation}

On the other hand, by~\cite[Lemma~2.29]{Isaacs}, as $\mu$ vanishes
on $V-\textbf{V}(\mu)$ we have
$[\mu_{\textbf{V}(\mu)},\mu_{\textbf{V}(\mu)}]=|V:\textbf{V}(\mu)|$.
Suppose that $\mu_{\textbf{V}(\mu)}=e\sum_1^t\theta_i$ where the
$\theta_i\in\Irr(\textbf{V}(\mu))$ are distinct and of equal degree
by Clifford theory. We then have
$$|V:\textbf{V}(\mu)|=[\mu_{\textbf{V}(\mu)},\mu_{\textbf{V}(\mu)}]=e^2t \text{ and }
\mu(1)=et\theta_i(1)$$ for all $1\leq i\leq t$. Therefore,
$|V:\textbf{V}(\mu)|\leq \mu(1)^2=\Phi_2^2$, which
violates~\eqref{equation4}. The proof is complete.
\end{proof}


We are now ready to eliminate the three remaining possibilities
singled out in Lemma~\ref{eliminate maximal subgroups}.

\medskip

(i) {\bf Case} $\mathbf {U/M\cong P_a\cong
\tau(^\wedge[q^3]:\GL_3(q)).}$ Then $|G':U|=\Phi_2\Phi_4$ and for
every $i,$ $t\phi_i(1)$ divides some member of the set
$\mathcal{A}_1,$ where
$$\mathcal{A}_1=\{q\Phi_2,\Phi_1^2\Phi_2,\Phi_1\Phi_3,q\Phi_3,q^3,\Phi_2\Phi_3\}.$$
Let $L,V$ and $W$ be subgroups of $U$ such that
$$L/M\cong [q^3],V/M\cong \SL_3(q), \text{ and } W=LV\unlhd U.$$ We
obtain that if $\chi\in\Irr(W|\theta),$ then $\chi(1)$ divides some
number in $\mathcal{A}_1.$

(a) {Subcase} $L\leq I\leq U.$ We have $W/L\cong \SL_3(q).$ Write
$\theta^L=\sum_{i=1}^k\lambda_i,$ where
$\lambda_i\in\Irr(L|\theta).$

We first show that $L/\Ker\theta$ is abelian. It suffices to show
that $L'\leq \Ker\lambda_i$ for all $i,$ and hence \[L'\leq
\cap_{i=1}^k \Ker\lambda_i=\Ker\theta^L=\Ker\theta.\] By way of
contradiction, assume that $L/\Ker\theta$ is nonabelian. Then
$L'\nleq \Ker\lambda_j$ for some $j,$ hence
$\lambda_j(1)>\theta(1)=1$ and $p\mid \lambda_j(1).$ As
$\Irr(W|\lambda_j)\subseteq \Irr(W|\theta),$ by Lemma~\ref{Pa}, we
obtain that $\lambda_j$ is $W$-invariant.

As $q\geq 8,$ we deduce that the Schur multiplier of $W/L\cong
SL_3(q)$ is trivial so that by \cite[Theorem~11.7]{Isaacs},
$\lambda_j$ extends to $\lambda_0\in\Irr(W)$ and hence by Gallgher's
Lemma, $\lambda_0\tau$ are all the irreducible constituents of
$\lambda^W$ where $\tau\in\Irr(W/L).$ By choosing $\tau\in\Irr(W/L)$
with $\tau(1)=q^3,$ we obtain that
$\lambda_0(1)\tau(1)=q^3\lambda_j(1)$ must divide some number in
$\mathcal{A}_1,$ which is impossible as $\lambda_j(1)>1.$

Thus $L/\Ker\theta$ is abelian. By Lemma~\ref{Pa} again, all
$\lambda_i$ are $W$-invariant and linear. Therefore $$[W,L]\leq
\cap_{i=1}^k \Ker\lambda_i\leq \Ker\theta^L=\Ker\theta\leq M,$$
which is a contradiction as $W$ acts nontrivially on $L/M\cong
[q^3].$

(b) Subcase $L\nleq I.$ As $I\cap L\lneq L,$ we can choose
$\lambda\in\Irr(L|\theta)$ with $p\mid \lambda(1).$ By Lemma
\ref{Pa}, we obtain that $\lambda$ is $W$-invariant. As $q\geq 13,$
we deduce that the Schur multiplier of $W/L\cong \SL_3(q)$ is
trivial so that by \cite[Theorem~11.7]{Isaacs}, $\lambda$ extends to
$\lambda_0\in\Irr(W)$ and hence by Gallgher's Lemma, $\lambda_0\tau$
are all the irreducible constituents of $\lambda^W$ where
$\tau\in\Irr(W/L).$ By choosing $\tau\in\Irr(W/L)$ with
$\tau(1)=q^3,$ we obtain that $\lambda_0(1)\tau(1)=q^3\lambda(1)$
must divide some number in $\mathcal{A}_1,$ which is impossible as
$\lambda(1)>1.$

\medskip

(ii) {\bf Case} $\mathbf {U/M\cong \tau(^\wedge\textrm{Sp}_4(q)\cdot
(q-1,2))}\cong \textrm{S}_4(q)\cdot [(q-1,2)^2/(q-1,4)]$. Then
$|G':U|=q^2\Phi_1\Phi_3/(2,q-1),$ and for every $i,$ $t\phi_i(1)$
divides $(2,q-1)\Phi_1.$  Let $M\unlhd W\unlhd U$ be such that
$W/M\cong \textrm{S}_4(q).$ As $W\unlhd U,$ for any
$\varphi\in\Irr(W|\theta),$ we obtain that $\varphi(1)$ divides
$(2,q-1)\Phi_1.$ By Lemma~\ref{maxSp4}, we deduce that $\theta$ is
$W$-invariant. Write $\theta^W=\sum_{i=1}^k f_i\mu_i,$ where $\mu_i
\in\Irr(W|\theta).$ If $f_j=1$ for some $j,$ then $\theta\in\Irr(M)$
is extendible to $\theta_0\in\Irr(W).$ By Lemma~\ref{Gallagher} we
obtain that $\tau\theta_0$ are all the irreducible constituents of
$\theta^W$ for $\tau\in\Irr(W/M).$ As $W/M\cong \textrm{S}_4(q),$ it
has an irreducible character $\tau\in\Irr(W/M)$ with $\tau(1)=q^4,$
and hence $\theta_0(1)\tau(1)=q^4$ must divide $(2,q-1)\Phi_1,$
which is impossible. Hence this case cannot happen. Thus all $f_i>1$
and they are the degree of projective irreducible representation of
$W/M\cong \textrm{S}_4(q).$ As $W\unlhd U,$ we deduce that
$\varphi(1)$ divides $(2,q-1)\Phi_1$ for any
$\varphi\in\Irr(W|\theta).$ Therefore as
$\mu_i(1)=f_i\theta(1)=f_i,$ we deduce that $f_i(1)$ divide
$(2,q-1)\Phi_1$ for all $i.$ In particular, we have $f_i\leq
2\Phi_1$ for all $i.$ However using \cite{Lubeck}, we see that the
smallest nontrivial projective degree of $S_4(q)$ when $q>3$ is
$\frac{1}{2}\Phi_1\Phi_2$ which is obviously larger then $2\Phi_1$
as $q\geq 13.$ Thus this case cannot happen.
\medskip

(iii) {\bf Case} $\mathbf {U/M\cong
P_b\cong\tau(^\wedge[q^4]:(\textrm{SL}_2(q)\times
\textrm{SL}_2(q)).\ZZ_{q-1}))}$. Then $|G':U|=\Phi_3\Phi_4$ and for
every $i,$ $t\phi_i(1)$ divides some member of the set
$$\mathcal{A}_2=
\{q\Phi_1,\Phi_1^2,q\Phi_2,\Phi_2^2,\Phi_1\Phi_2,q^2\}.$$ Let
$L,V_1,V_2$ be subgroups of $U$ containing $M$ such that $$L/M\cong
[q^4], V_i/M\cong \textrm{SL}_2(q) \text{ for } i=1,2,$$ and let
$$V=V_1V_2,W=LV.$$ Then $W\unlhd U$ and for any
$\varphi\in\Irr(W|\theta),$ we obtain that $\varphi(1)$ divides some
number in $\mathcal{A}_2.$ In particular, $\varphi(1)\leq \Phi_2^2.$
By Frobenius reciprocity and the fact that $(\theta^V)^W=\theta^W,$
we obtain that $\chi(1)\leq \Phi_2^2$ for any
$\chi\in\Irr(V|\theta).$ By Lemma \ref{Pb}, we have that $\theta$ is
$V$-invariant and hence $V\leq I.$ Let $$L_1=L\cap I \text{ and }
X=W\cap I.$$ We have that $L_1\unlhd X$ and $W=LX$ since $W=LV$ and
$V\leq I.$

(a) {Subcase} $L\leq I.$ Then $\theta$ is $W$-invariant.  We have
$W/L\cong \textrm{SL}_2(q)\circ \textrm{SL}_2(q)$ and $W$ acts
irreducibly on $L/M\cong [q^4].$ Write
$$\theta^L=\lambda_1+\lambda_2+\cdots+\lambda_k, \text{ where } \lambda_i\in\Irr(L|\theta).$$ We will show that $L/ker\theta$ is
abelian. It suffices to show that $L'\leq \Ker\lambda_i$ for all
$i,$ and hence \[L'\leq \cap_{i=1}^k
\Ker\lambda_i=\Ker\theta^L=\Ker\theta.\] By way of contradiction,
assume that $L/\Ker\theta$ is nonabelian. Then $L'\nleq
\Ker\lambda_j$ for some $j,$ hence $\lambda_j(1)>\theta(1)=1$ and
$p\mid \lambda_j(1).$ By Lemma~\ref{Pb}, we deduce that $\lambda_j$
is $W$-invariant. By~\cite[Theorem~$2$]{ABS}, we deduce that $L/M$
is an abelian chief factor of $W,$ and so by Lemma~\ref{fully
ramified}, we obtain that $\lambda_j(1)=q^{2}.$ Hence for any
$\delta\in\Irr(W|\lambda_j),$ we have that $q^{2}\mid \delta(1)$ and
$\delta(1)$ divides one of the number in $\mathcal{A}_2,$ so that
$\delta(1)= q^2.$ By~\cite[Theorem~2.3]{Moreto}, we obtain a
contradiction as $W/L\cong \SL_2(q)\circ \SL_2(q)$ is nonsolvable.

Thus $L/\Ker\theta$ is abelian. Hence $\lambda_i(1)=1$ and also
$\lambda_i$ are $W$-invariant for all $i$ by Lemma~\ref{Pb}. Hence
we obtain that
$$[W,L]\leq \cap_{i=1}^k \Ker\lambda_i\leq \Ker\theta^L=\Ker\theta\leq
M,$$ which is a contradiction as $W$ acts nontrivially on $L/M\cong
[q^4].$

(b) {Subcase} $L\nleq I.$ Then $L_1/M\lneq L/M.$ We have that
$L_1/M\unlhd X/M$ and since $L/M\cong [q^4]$ is abelian, we also
have $L_1/M\unlhd L/M,$ and hence \[L_1/M\unlhd XL/M=W/M.\] As
$L_1/M\lneq L/M$ and $L/M$ is irreducible under $W/M,$ we deduce
that $L_1=M.$ Then \[t=|U:WI|\cdot |W:X|=|U:WI|\cdot
|L:L_1|=|U:WI|\cdot |L:M|=q^4|U:WI|.\] Hence $q^4\mid t,$ which is
impossible as $t$ divides some number in $\mathcal{A}_2.$


\section{Final step}\label{step 5}
Recall from section~\ref{step3} that
$$G/M\cong G'/M\times C/M,$$ where $C/M=C_{G/M}(G'/M)$ and
$G'/M\cong\PSL_4(q)$.

We first claim that $C/M$ is abelian. Assume not, then it would have
a nonlinear irreducible character, say $\psi$. It follows that
$\psi\times\St_{G'/M}$ is an irreducible character of $G/M$ of
degree $q^6\psi(1)$, which is impossible.

\medskip

(i) We assume that $M=M'.$ If $M$ is abelian then $M=M'=1$. It
follows that $G\cong G'\times C_G(G')$ and we are done as $C_G(G')\cong G/G'$
is abelian. So it remains to consider the case when $M$ is
non-abelian. Let $N\leq M$ be a normal subgroup of $G'$ so that
$M/N\cong S^k$ for some non-abelian simple group $S$.
By~\cite[Lemma~4.2]{Moreto1}, $S$ has a non-principal irreducible
character $\varphi$ extending to $\Aut(S)$. Lemma~\ref{lemma2} and
its proof then imply that $\varphi^k$ extends to $G'/N$. Therefore,
by Gallagher's lemma, $\varphi^k\St_{G'/M}\in\Irr(G'/N)$ where
$\St_{G'/M}$ is the Steinberg character of $G'/M\cong\PSL_4(q)$.
However, $\varphi^k(1)\St_{G'/M}(1)$, which is $q^6\varphi^k(1)$,
does not divide any degree of $G$ and hence we get a contradiction.

\medskip

(ii) Now we can assume $M>M'$. As shown in section~\ref{step4},
every linear character of $M$ is $G'$-invariant. Using
Lemma~\ref{lemmaHuppert1}, we deduce that
$$[M,G']=M' \text{ and } |M:M'|\mid |\Mult(G'/M)|.$$ Remark that $\Mult(\PSL_4(q))$ is a cyclic
group of order $(q-1,4)$. We see that $q$ must be odd and we come up
with the two following cases:

\textbf{Case} $\mathbf{q\equiv 3\bmod{4}}$: Then $|\Mult(G'/M)|=2$
and hence $|M:M'|=2$. Therefore, by Lemma~\ref{Schur cover}, we have that
$G'/M'\cong\SL_4(q).$ For any $x,y\in G'$ and $c\in C$, the fact
$[M,G']=M'$ implies that
$$[xM',yM']^{cM'}=[xM'^{cM'},yM'^{cM'}]=
[xM'mM',yM'nM']=[xM',yM']$$ for some $m,n\in M$. In other words, $C$
centralizes $G'/M'$ and we obtain that $G/M'$ is an internal central
product of $G'/M'$ and $C/M'$ with amalgamated $M/M'\cong \ZZ_2$.

We claim that if $C/M'$ is abelian then $\cd(G/M')=\cd(G'/M')$. The
inclusion $\cd(G/M')\subseteq \cd(G'/M')$ is obvious. Thus it
remains to show that $\cd(G'/M')\subseteq \cd(G/M')$. Let $tM'$ be
the generator of $M/M'$ and $\chi\in\Irr(G'/M')$. As $tM'$ is a
central involution of $G'/M'$, we have $\chi(tM')=\chi(1)$ or
$-\chi(1)$. Since $tM'$ is also a central involution of the abelian
group $C/M'$, there exists a linear character $\theta\in\Irr(C/M')$
such that $\theta(tM')=\chi(1)/\chi(tM')$. We then have
$$\chi\times\theta\in\Irr(G'/M'\times C/M')$$ and furthermore $$(tM',tM')\in\Ker(\chi\times\theta).$$
It follows that $\chi\times\theta$ can be considered as a character
of $G/M'$ and hence we have proved that $\chi(1)\in\cd(G/M')$, as
wanted.

From the above claim, if $C/M'$ is abelian, we would have
$$\cd(SL_4(q))=\cd(G'/M')=\cd(G/M')\subseteq\cd(G)=\cd(\PSL_4(q)).$$
This however contradicts Lemma~\ref{cd(SL4) is not in cd(PSL)} that
$\Phi_1\Phi_2\Phi_3\Phi_4/2$ is in $\cd(\SL_4(q))$ but not in
$\cd(\PSL_4(q))$. Thus we obtain that $C/M'$ is non-abelian. Let
$\alpha$ be a nonlinear irreducible character of $C/M'$. As $C/M$ is
abelian, $\alpha$ must be faithful and hence
$\alpha(tM')=-\alpha(1)$. On the other hand, by inspecting the
character degrees and their multiplicities of $\SL_4(q)$ and
$\PGL_4(q)$ from~\cite{Lubeck}, one sees that both have degree
$\Phi_1^2\Phi_2^2\Phi_4$ with multiplicity $q(q^2-1)/3$. This in
particular implies that $G'/M'\cong\SL_4(q)$ has a faithful
irreducible character of degree $\Phi_1^2\Phi_2^2\Phi_4$, say
$\beta$. We then have
$$\beta(tM')=-\beta(1)=-\Phi_1^2\Phi_2^2\Phi_4.$$
Therefore,
$$\alpha(tM')\beta(tM')=\alpha(1)\beta(1).$$ In other words,
$(tM',tM')\in\Ker(\beta\times\alpha)$. As $G/M'$ is an internal
central product of $G'/M'$ and $C/M'$ with amalgamated $M/M'$, we
obtain
$$\beta\times\alpha\in\Irr(G/M').$$ This however is a
contradiction since
$\alpha(1)\beta(1)=\Phi_1^2\Phi_2^2\Phi_4\alpha(1)>\Phi_1^2\Phi_2^2\Phi_4$
and $G$ has no degree which is a proper multiple of
$\Phi_1^2\Phi_2^2\Phi_4$.

\textbf{Case} $\mathbf{q\equiv 1\bmod{4}}$: Then $|\Mult(G'/M)|=4$
and hence $|M:M'|=2$ or $4$. The case $|M:M'|=2$ can be handled
similarly as above with the notice that $$G'/M'\cong
\SL_4(q)/\langle -I\rangle,$$ where $I$ is the identity $4\times4$
matrix. So we assume from now on that $|M:M'|=4$. It follows by
Lemma~\ref{Schur cover} that
$G'/M'\cong\SL_4(q).$ Arguing as in the case $q\equiv 3\bmod{4}$, we obtain that $G/M'$ is an internal central
product of $G'/M'$ and $C/M'$ with amalgamated $M/M'\cong \ZZ_4$. If
$C/M'$ is abelian, as above, we would have
$\cd(SL_4(q))\subseteq\cd(\PSL_4(q)).$ This again is impossible by
Lemma~\ref{cd(SL4) is not in cd(PSL)}. Therefore we have that $C/M'$
is non-abelian. Let $\alpha$ be a nonlinear irreducible character of
$C/M'$. Then $\alpha(tM')=\kappa\alpha(1)$ where $\kappa$ is a
nontrivial fourth root of $1$ and $tM'$ is the generator of $M/M'$.
Inspecting the character degrees and their multiplicities of
$\SL_4(q)$ and $\PGL_4(q)$ from~\cite{Lubeck}, one sees that both
have degree $\Phi_1^2\Phi_2^2\Phi_4$ with multiplicity $q(q^2-1)/3$.
Also, $\SL_4(q)$ has no degree $\Phi_1^2\Phi_2^2\Phi_4/2$ or
$\Phi_1^2\Phi_2^2\Phi_4/4$. This in particular implies that
$G'/M'\cong\SL_4(q)$ has a faithful irreducible character of degree
$\Phi_1^2\Phi_2^2\Phi_4$, say $\beta$, such that
$$\beta(tM')=\kappa^3\beta(1)=\kappa^3\Phi_1^2\Phi_2^2\Phi_4.$$
Therefore,
$$\alpha(tM')\beta(tM')=\kappa^4\alpha(1)\beta(1)=\alpha(1)\beta(1).$$ In other words,
$tM'\in\Ker(\alpha\beta)$ and hence
$$\alpha\beta\in\Irr(G'/M'\circ C/M')=\Irr(G/M').$$ This is a
contradiction as
$(\alpha\beta)(1)=\Phi_1^2\Phi_2^2\Phi_4\alpha(1)>\Phi_1^2\Phi_2^2\Phi_4$
and $G$ has no degree which is a proper multiple of
$\Phi_1^2\Phi_2^2\Phi_4$.


\section*{Acknowledgement} The first author gratefully acknowledges
the support of the Faculty Research Grant FRG 1747 from The
University of Akron. The second author is supported by NRF (South
Africa) and North-West University (Mafikeng).


\begin{thebibliography}{99}
\bibitem{ABS} H. Azad, M. Barry and G.M. Seitz, On the structure of parabolic subgroups,  {\it Comm. Algebra} \textbf{18} (1990), $551-562$.

\bibitem{Bianchi}
M. Bianchi, D. Chillag, M.L. Lewis and E. Pacifici, Character degree
graphs that are complete graphs, {\it Proc. Amer. Math. Soc.}
{\bf135} $(2007)$, $671-676$.

\bibitem{Carter}
  R.W. Carter, {\it Finite groups of Lie type: Conjugacy classes and complex
  characters}, Wiley and Sons, New York, $1985$.

\bibitem{Clasen}
  M. Cla{\ss}en-Houben, {\it Die unipotenten charaktere fur die GAP-charaktertafeln
  der endlichen gruppen vom Lie-typ}, Diplomarbeit. RWTH Aachen, 2005.

\bibitem{Atl1}
  J.H. Conway, R.T. Curtis, S.P. Norton, R.A. Parker and R.A.
  Wilson,
 {\it Atlas of finite groups}, Clarendon Press, Oxford, $1985$.

\bibitem{Dade}
E.C. Dade, Deux groupes finis distincts ayant la m\^{e}me
alg\`{e}bre de groupe sur tout corps, {\it Math. Z.} {\bf119}
(1971), $345-348$.

\bibitem{Dolfi}
  S. Dolfi, G. Navarro, and P.H. Tiep, Primes dividing the degrees
  of the real characters, {\it Math. Z.} {\bf 259} (2008),
  $755-774$.

\bibitem{Erdos}
  P. Erd\"{o}s and J. L. Selfridge, The product of consecutive
  integers is never a power, {\it Illinois J. Math} {\bf 19}
  $(1975)$, $292-301$.

\bibitem{Feit}
  W. Feit. Extending Steinberg characters, {\it Contemp. Math.} {\bf 153} $(1993)$, $1-9$.

\bibitem{Gorenstein}
  D. Gorenstein, R. Lyons and R. Solomon, {\it The classifications of
  the finite simple groups}, Math. Surveys Monogr., No. 3, Amer.
  Math. Soc., Providence, 1998.

\bibitem{Huppert1}
  B. Huppert, Some simple groups which are determined by the set of their character degrees
  I, {\it Illinois J. Math.} {\bf 44} $(2000)$, $828-842$.

\bibitem{Huppert2}
B. Huppert, Some simple groups which are determined by the set of
their character degrees II, {\it Rend. Sem. Mat. Univ. Padova}
\textbf{115} (2006), $1-13$.

\bibitem{Huppert9}
  B. Huppert, Some simple groups which are determined by the set of their character degrees
  IX, preprint.

\bibitem{Isaacs}
  I.M. Isaacs, {\it Character theory of finite groups}, Dover
  Publications, Inc., New York, $1994$.

\bibitem{King} O. King, The subgroup structure of finite classical groups in terms of
geometric configurations, {\it London Math. Soc. Lecture Note
Series} \textbf{327} (2005), $29-56$.

\bibitem{Kleidman}
  P.B. Kleidman, {\it The subgroup structure of some finite simple
  groups}, PhD thesis, Trinity College, 1986.

\bibitem{Kleidman-Liebeck}
P. Kleidman and M. Liebeck, \emph{The subgroup structure of the
finite classical groups}, London Mathematical Society Lecture Note
Series \textbf{129}, Cambridge University Press, Cambridge, 1990.

\bibitem{Lubeck}
  F. L\"{u}beck, Data for finite groups of Lie type and related
  algebraic groups, {\tt http://www.math.rwth-aachen.de/~Frank.Luebeck/chev/index.html}

\bibitem{Malle-Zalesskii}
  G. Malle and A.E. Zalesskiim, Prime power degree representations of
  quasi-simple groups, {\it Arch. Math.} {\bf 77} (2001), $461-468$.

\bibitem{Moreto1}
  A. Moret\'{o}, Complex group algebras of finite groups: Brauer's
  problem 1, {\it Adv. Math.} {\bf 208} $(2007)$, $236-248$.

\bibitem{Moreto} A. Moret\'{o}, An answer to a question of Isaacs on character degree graphs, {\it Adv. Math.} \textbf{201}
$(2006)$, $90-101$.

\bibitem{Nagell}
  T. Nagell, Des \'{e}quations ind\'{e}termin\'{e}es $x^2+x+1=y^n$
  et $x^2+x+1=3y^n$, {\it Norsk. Mat. Foren. Skr. Serie I} {\bf 2}
  $(1921)$, $12-14$.

\bibitem{Navarro}
  G. Navarro, P.H. Tiep and A. Turull, Brauer characters with
  cyclotomic field of values, {\it J. Pure Appl. Algebra} {\bf 212}
  (2008), $628-635$.

\bibitem{TV1} H.P. Tong-Viet, The simple Ree groups ${\rm {}^2F_4(q^2)}$ are
determined by the set of their character degrees, {\it J. Algebra.}
\textbf{339} (2011), $357-369$.


\bibitem{HW1} H.P. Tong-Viet and T.P. Wakefield, On the Huppert's Conjecture for ${\rm {}^3D_4(q),q\geq
3}$, {\it Algebr. Represent. Theor.}, to appear.

\bibitem{Wakefield1}
T.P. Wakefield, Verifying Huppert's conjecture for ${\rm PSL}_3(q)$
and ${\rm PSU}_3(q^2)$, {\it Comm. Algebra} {\bf37} (2009),
2887--2906.

\bibitem{Wakefield3}
T.P. Wakefield, Verifying Huppert's conjecture for ${\rm PSp_4(q)}$
when $q>7$, {\it Algebr. Represent. Theor.}, to appear.

\bibitem{Zsigmondy}
  K. Zsigmondy, Zur Theorie der Potenzreste, {\it Monath. Math.
  Phys.} {\bf 3} (1892), $265-284$.

\end{thebibliography}
\end{document}